\title{Invariant measures in simple and in small theories}
\date{\today}
\author[Chernikov, Hrushovski, Kruckman, Krupi\'nski, Moconja, Pillay, Ramsey]{Artem Chernikov}
\address[A.\ Chernikov]{UCLA}
\email{chernikov@math.ucla.edu}
\author[]{Ehud Hrushovski}
\address[E.\ Hrushovski]{University of Oxford}
\email{ehud.hrushovski@maths.ox.ac.uk}
\author[]{Alex Kruckman}
\address[A.\ Kruckman]{Wesleyan University}
\email{akruckman@wesleyan.edu}
\author[]{Krzysztof Krupi\'nski}
\address[K.\ Krupi\'nski]{\parbox{\linewidth}{Instytut Matematyczny, Uniwersytet Wroc\l awski\\ Pl. Grunwaldzki 2, 50-384, Wroc\l aw\\
\url{https://orcid.org/0000-0002-2243-4411}}}
\email{kkrup@math.uni.wroc.pl}	
\author[]{Slavko Moconja}
\address[S.\ Moconja]{University of Belgrade}
\email{slavko@matf.bg.ac.rs}
\author[]{Anand Pillay}
\address[A.\ Pillay]{University of Notre Dame}
\email{apillay@nd.edu}
\author[]{Nicholas Ramsey}
\address[N.\ Ramsey]{UCLA}
\email{nickramsey@math.ucla.edu}
\thanks{The first author is supported by the NSF CAREER grant DMS-1651321 and by a Simons Fellowship. 
The fourth author is supported by the Narodowe Centrum Nauki grants nos. 2016/22/E/ST1/00450 and 2018/31/B/ST/00357. 
The fifth author is supported by the Science Fund of the Republic of Serbia,
grant 7750027--SMART.
The sixth author is supported by NSF grants DMS-1665035,  DMS-1760212, and DMS-2054271.}
\newtheorem{Theorem}{Theorem}[section]
\newtheorem{Proposition}[Theorem]{Proposition}
\newtheorem{Lemma}[Theorem]{Lemma}
\newtheorem{Corollary}[Theorem]{Corollary}
\newtheorem{Fact}[Theorem]{Fact}
\newtheorem*{Claim*}{Claim}
\newenvironment{uLemma}[1]
  {\unutraLemma}
  {\endunutraLemma}
\theoremstyle{definition}
\newtheorem{Definition}[Theorem]{Definition}
\newtheorem{Axiom}{Axiom Schema}
\theoremstyle{remark}
\newtheorem{Remark}[Theorem]{Remark}
\newtheorem{Example}[Theorem]{Example}
\newenvironment{explanation}{

\begin{proof}

}{
\end{proof}
}
\newenvironment{proof*}[1]{
\noindent{\em #1.}
}{
\qed
}
\newcommand{\R}{\mathbb R}   
\newcommand{\Q}{\mathbb Q}  
\newcommand{\Z}{\mathbb Z}  
\newcommand{\N}{\mathbb N}
\begin{document}
\maketitle

\begin{abstract} We give examples of (i) a simple theory with a formula (with parameters) which does not fork over $\emptyset$ but has $\mu$-measure $0$ for every automorphism invariant Keisler measure $\mu$, and  (ii) a definable group $G$ in a simple theory such that $G$ is not definably amenable, i.e. there is no translation invariant Keisler measure on $G$. 

We also discuss paradoxical decompositions both in the setting of discrete groups and of definable groups, and prove some positive results about small theories, including the definable amenability of definable groups. 
\end{abstract}

\section{Introduction and preliminaries}
We begin with an introduction for a general audience.  The paper is about {\em amenability} in model-theoretic environments, with  both nonexistence and existence theorems. The expression ``amenability"  often refers to the existence of a finitely additive probability measure $\mu$ on some suitable collection ${\mathcal B}$ of subsets of a given set $X$, which is invariant under a certain action of a certain group $G$.  When $X=G$, $\mathcal B$ is the collection of {\em all} subsets of $G$,  and the action is the action of $G$ on $\mathcal B$ by left translation, then we obtain precisely the ``classical" notion of amenability of $G$ as a discrete group.  Remaining in this context, one could replace the Boolean algebra of {\em all} subsets of $G$ by some other Boolean algebra of subsets of $G$ invariant under left translation, and ask for amenability with respect to the new Boolean algebra. In some interesting examples one obtains strikingly different behaviour when passing to  natural and reasonably rich Boolean algebras. For example the free group $F_{2}$ on two generators is not amenable as a discrete group, but if we choose instead the Boolean algebra $\mathcal B$ to be the collection of subsets of $F_{2}$ which are {\em definable} (with parameters) in the structure $(F_{2},\times)$, then not only do we get amenability, but ``unique ergodicity": there is a unique invariant measure which is moreover $\{0,1\}$-valued.  This is a consequence of the fact that  the first order theory  $Th((F_{2},\times))$ of the structure $(F_{2},\times)$ has a property called {\em stability}, which can be summed up by the statement that ``any stable group is (uniquely) definably amenable".  In addition to the free group, all commutative groups and all algebraic groups over algebraically closed fields are stable.   A more general class of first order theories, the class of  so-called {\em  simple theories} was defined and studied beginning in the 1980's, often in the context of  specific examples of independent interest such as {\em pseudofinite fields} (logical limits of finite fields). Early applications were to algebraic groups over finite fields \cite{HP-algebraicgroups}.  Groups definable in pseudofinite fields are definably amenable witnessed by a ``nonstandard counting measure". It was asked around ten years ago whether groups definable in any  simple theory are definably amenable.  One of our main theorems appearing in Section \ref{sec 3}  (as in (ii) of the abstract) is a counterexample. 

We now give some background for (i) in the abstract, which on the face of it, may seem less accessible to the general reader. 
The context, implicit in the paragraph above,  is a structure $M$ in the sense of model theory, namely an underlying set which we also call $M$, equipped with a collection $D$ of distinguished subsets of various Cartesian powers $M^{n}$ of $M$, including the diagonal $\subset M^{2}$.   The automorphism group $Aut(M)$ is the group of permutations of $M$ which fix setwise each of the distinguished sets.  Closing under the operations of finite Boolean combination, and projection (from $M^{n+1}$ to $M^{n}$),  we obtain the class $D_{1}$ of $\emptyset$-definable sets. For $X\subseteq M^{n+k}$  in $D_{1}$, and ${\bar a}\in M^{k}$, let $X_{\bar a} = \{{\bar b}\in M^{n}: ({\bar b}, {\bar a})\in X\}$.  These various $X_{\bar a}$ (as $X$ and $\bar a$ vary), are called the definable (with parameters) sets in the structure $M$. $Aut(M)$ acts on the collection of definable sets.  We fix some ambient Cartesian power $M^{n}$ of $M$, and consider the Boolean algebra  ${\mathcal B}$ of definable subsets of $M^{n}$, again acted on by $Aut(M)$. We make an additional assumption on $M$ (saturation) ensuring that $Aut(M)$ is ``large" in a suitable sense.  One of the recent waves of connections between model theory and combinatorics, specifically \cite{Hrushovski-approximate}, was largely based on  an analogy between two kinds of  ($Aut(M)$-)invariant ideals of ${\mathcal B}$: the ``forking ideal" $I_{f}$ in the case that the first order theory $Th(M)$ is simple (see below for details and definitions)  and for any invariant finitely additive probability measure $\mu$ on ${\mathcal B}$, such as the nonstandard counting measure when $M$ is pseudofinite, the $\mu$-measure $0$ ideal $I_{\mu}$. We always have that   $I_{f} \subseteq I_{\mu}$, and it was an open question whether for simple theories $I_{f}$ is precisely the intersection of the $I_{\mu}$ as $\mu$ varies over all  invariant measures.   We answer this question negatively in this paper. The main example is constructed in Section \ref{sec 2}, producing a theory with many invariant measures, and a formula which is in $I_{\mu}$ for all $\mu$ but not in $I_{f}$.
On the other hand, a corollary of the main theorem in Section \ref{sec 3}, is the existence of a simple theory and a ``sort" on which there are {\em no} invariant measures, giving another route to a negative answer to the question. 

Another aspect of the paper, which is made explicit in Section \ref{sec 4}, concerns the  ``paradoxical decomposition" obstructions to amenability in the various senses. We are interested in  definable versions of paradoxical decompositions, and which model theoretic properties of theories $T$ are incompatible with definable paradoxical decompositions.  Various results are obtained including the definable amenability of definable groups in ``small" theories (where the Boolean algebras of $\emptyset$-definable sets admit a Cantor-Bendixon analysis).\\

We now pass on to a more technical introduction, for readers familiar with model theory.

In stable theories,  Keisler measures are very well understood,  originating in \cite{Keisler}.  There was a comprehensive study of Keisler measures in $NIP$ theories, starting with  \cite{NIPI}, \cite{NIPII}, \cite{NIPIII}. There are indications that many of these results fail outside of NIP (see \cite{CGH}). 
It is  very natural to ask what happens in simple theories.
The main thrust of the current paper is to give counterexamples to some of these questions, as in the abstract. Another aspect of the paper is to give some positive results in the case of countable small theories. 

Partly as motivation we will, in this introduction,  discuss and recall what is known about Keisler measures and forking in general, as well as in stable and $NIP$ theories and then  state the questions which are answered in the body of this paper. 
  
Our model-theoretic notation is standard.  Models will be denoted by $M, N, \ldots$ and subsets (sets of parameters) by $A, B, \ldots$.  If and when we work with a complete theory $T$ then we often work in a sufficiently saturated model, called $\mathfrak C$ or ${\bar M}$; $a,b, \ldots$ refer to tuples in models of $T$ unless we say otherwise or clear from the context. 

The study of {\em stable theories} is connected to {\em categoricity} and is largely due to Shelah \cite{Shelah-book}. There are many other  reference books, including \cite{Pillay-GST}.   In the middle 1990's the machinery of stability theory was extended or generalized to the class of {\em simple theories}  which had been defined earlier by Shelah in \cite{Shelah-simple}. This development was closely connected to and went  in parallel with the concrete analysis of several kinds of structures and theories, including Lie coordinatizable and smoothly approximable  structures (\cite{KLM},  \cite{Cherlin-Hrushovski}), and bounded \emph{PAC} fields (\cite{HP}  and the later published \cite{Hrushovski-PAC}), using tools with a stability theoretic flavour.  In fact Hrushovski's \emph{$S_{1}$-theories} already provided a certain abstract finite rank environment for adapting stability to the more general situations. 
The technical breakthroughs came with Byunghan Kim's thesis \cite{Kim}, \cite{Kim-paper}  followed by \cite{Kim-Pillay}. Kim showed that all the machinery of nonforking independence extended word-for-word from stable theories to simple theories, except for stationarity of types over models (or more generally algebraically closed sets), and \cite{Kim-Pillay}  found the appropriate weak version of the stationarity theory: the Independence Theorem over a model, or more generally for Lascar strong types.  The latter, improved to so-called Kim-Pillay strong types, migrated  and became essential in all of model theory, and also made connections to combinatorics and Lie groups possible, although we still do not know,  whether this level of generality, versus the strong types of Shelah, is really needed in simple theories. 
The expression ``Independence Theorem" already appears in the earlier work on $S_{1}$-theories, and was borrowed from there. 
In addition to the original papers, there are several good texts on simple theories \cite{Wagner}, \cite{Kim-book}, \cite{Casanovas}.  The original definition of simplicity was in terms of not having the ``tree property". We will define it here in terms of ``dividing" as it is an opportunity to introduce dividing and forking.

\begin{Definition}\phantomsection\label{def first one}
\begin{enumerate}[(i)]
\item A formula $\phi(x,b)$ \emph{divides} over $A$ if there exists an $A$-indis\-cernible sequence $(b_{i}:i<\omega)$ with $b = b_{0}$ such that $\{\phi(x,b_{i}):i<\omega\}$ is inconsistent.
\item If $\Sigma(x)$ is a partial type over a set $B$ closed under conjunctions and $A\subseteq B$, then $\Sigma(x)$ \emph{divides} over $A$ if some formula $\phi(x,b)\in \Sigma(x)$  divides over $A$.
\item A formula \emph{forks} over $A$ if it implies a finite disjunction of formulas each of which divides over $A$.
\item For $\Sigma(x)$, $A\subseteq B$ as in (ii), $\Sigma(x)$ \emph{forks} over $A$ if some formula in $\Sigma(x)$ forks over $A$.
\item The complete theory $T$ is said to be {\em simple} if for any  complete type $p(x)\in S(B)$ there is a subset $A\subseteq B$ of cardinality at most $|T|$ such that $p(x)$ does not divide over $A$. 
\end{enumerate}
\end{Definition}

In simple theories, dividing and forking coincide. Stable theories can be characterized as simple theories such that for any model $M$, $p(x)\in S(M)$, and $M\prec N$,  $p(x)$ has a {\em unique} extension to a complete type $q(x)\in S(N)$ which does not fork over $M$. 

The \emph{stable forking conjecture} says that in a simple theory $T$, forking is explained by the ``stable part"  of $T$ (in a sense that we will not describe in detail).   There are many simple theories $T$ which have a stable reduct $T_{0}$ (with quantifier elimination) such that $T$ is the model companion of $T_{0}$  together with the new relations (possibly modulo some mild universal theory). Typically in such a situation forking in $T$ is witnessed by forking in $T_{0}$ so the stable forking conjecture holds. Our two main examples of simple theories will  have this feature. 

In a simple theory $T$ we will say that $a$ and $b$ are \emph{independent over $A$} (in the sense of nonforking) if $tp(a/A,b)$ does not fork over $A$.  This satisfies a number of properties: invariance, finite character, local character, 
existence of nonforking extensions, symmetry, transitivity, and the ``Independence Theorem over a model". Moreover the existence of an ``abstract independence relation" satisfying these properties implies simplicity of $T$ as well as that this relation coincides with nonforking.  This will be used in Sections \ref{sec 2} and \ref{sec 3} and we will give a few more details there.
Among the ``simplest" simple theories are the theories of $SU$-rank $1$, where every complete nonalgebraic $1$-type has only algebraic forking extensions. 

Although $NIP$ theories are {\em not} really objects of study in the current paper, they form part of the motivation. A theory $T$ is $NIP$ if there is no formula $\phi(x,y)$ and $a_{i}$ for $i\in \omega$ and $b_{S}$ for $S\subseteq \omega$ in some model $M$ of $T$ such that for all $i,S$, $M\models \phi(a_{i}, b_{S})$ iff $i\in S$.   $NIP$ theories are generalization of stable theories in an orthogonal direction from simple theories, and in fact $T$ is stable if and only if $T$ is both simple and $NIP$.  Although forking is not so well-behaved in NIP unstable theories, it still plays a big role. In particular, forking coincides with dividing over models \cite{Chernikov-Kaplan}, and global nonforking extensions of types over a model $M$ are precisely extensions which are invariant under automorphisms fixing $M$ pointwise.  For a type $p(x)$ over a set $A$ its global nonforking extensions (if they exist) are rather invariant over the bounded closure ``$bdd(A)$". 

The other main ingredients in this paper are \emph{Keisler measures}.  Given a structure $M$ (or model $M$ of $T$), and variable $x$, a Keisler measure $\mu_{x}$ over $M$ is a finitely additive probability measure on the Boolean algebra of definable (with parameters) subsets of the $x$-sort in $M$.  Keisler measures generalize complete types $p(x)$ over $M$ which are the special case where the measure is $\{0,1\}$-valued  ($0$ for false, $1$ for true).  It took a long time for Keisler measures to become part of everyday model theory (see \cite{Chernikov-MeasuresReview} for a quick survey).  They were studied by Keisler in \cite{Keisler} which is, on the face of it, about $NIP$ theories, but where, among the main points, is that for stable theories,  {\em locally} (formula-by-formula) Keisler measures are weighted, possibly infinite, sums of types.  (See also \cite{Pillay-domination} where this is used to give a pseudofinite account of the stable regularity lemma.)  In the $NIP$ environment, Keisler measures  were a very useful tool in solving some conjectures about definable groups in $o$-minimal structures \cite{NIPI}.  In \cite{NIPII}, \cite{NIPIII}, the ubiquity of automorphism (translation) invariant Keisler measures in $NIP$ theories (groups) was pointed out. In \cite{HKP} a first-order theory was defined to be \emph{amenable} if every complete type over $\emptyset$ extends to a global automorphism invariant Keisler measure. 

For pseudofinite fields,  the nonstandard counting measure provides both automorphism invariant measures on definable sets, as well as translation invariant measures on definable groups (with very good definability properties).   The examples given in Sections \ref{sec 2} and \ref{sec 3} of the current paper show in particular that such behaviour does not extend to simple theories in general.

We will now describe the main results of the paper,  with motivations coming  from what is known in the stable context. 

We will talk about  (non-)forking over $\emptyset$, but $\emptyset$ can be systematically replaced by any small set $A$ of parameters.

The following is well-known (\cite{NIPI}, \cite{Newelski-Petrykowski}) but we recall the proof anyway.
\begin{Fact}\label{fact forking implies zero} (No assumption on $T$.)  Suppose $\phi(x,b)$ forks over $\emptyset$. Then $\mu(\phi(x,b)) = 0$ for any  automorphism invariant global Keisler measure $\mu(x)$.
\end{Fact}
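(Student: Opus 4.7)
The plan is to reduce from forking to dividing using monotonicity and finite subadditivity of $\mu$, and then to exploit an $\emptyset$-indiscernible witness sequence together with the automorphism invariance of $\mu$ to bound the measure.

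First I would reduce to the dividing case. Since $\phi(x,b)$ forks over $\emptyset$, by definition there exist formulas $\psi_1(x,c_1),\dots,\psi_n(x,c_n)$, each dividing over $\emptyset$, such that $\phi(x,b)\vdash\bigvee_{i=1}^n \psi_i(x,c_i)$. By monotonicity and finite subadditivity of the Keisler measure,
\[
\mu(\phi(x,b))\;\leq\;\mu\!\left(\textstyle\bigvee_{i=1}^n \psi_i(x,c_i)\right)\;\leq\;\sum_{i=1}^n \mu(\psi_i(x,c_i)),
\]
so it is enough to show that $\mu(\psi(x,c))=0$ whenever $\psi(x,c)$ divides over $\emptyset$.

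Next, fix such a $\psi(x,c)$ and let $(c_j:j<\omega)$ be an $\emptyset$-indiscernible sequence, realized in $\mathfrak C$, with $c_0=c$ and $\{\psi(x,c_j):j<\omega\}$ inconsistent. By compactness, there is some $k<\omega$ such that every $k$-element subfamily of $\{\psi(x,c_j):j<\omega\}$ has empty intersection in $\mathfrak C$. Since each $c_j$ has the same type over $\emptyset$ as $c_0$, there is an automorphism of $\mathfrak C$ sending $c_0$ to $c_j$; by the automorphism invariance of $\mu$, the value $\alpha:=\mu(\psi(x,c_j))$ does not depend on $j$.

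The final step is a standard counting argument. For any $N<\omega$, the indicator function $f=\sum_{j<N}\mathbf 1_{\psi(x,c_j)}$ takes values in $\{0,1,\dots,k-1\}$ on $\mathfrak C$, since no point lies in $k$ of the definable sets $\psi(\mathfrak C,c_j)$. Integrating against $\mu$ and using finite additivity gives
\[
N\alpha\;=\;\sum_{j<N}\mu(\psi(x,c_j))\;=\;\int f\,d\mu\;\leq\;k-1.
\]
Letting $N\to\infty$ forces $\alpha=0$, which completes the proof. The only mildly delicate point is the combinatorial step bounding $\sum_j \mu(\psi(x,c_j))$ by $k-1$; one could phrase it equivalently via finite additivity applied to the disjoint refinements of the $\psi(\mathfrak C,c_j)$, but the indicator-function formulation is the cleanest.
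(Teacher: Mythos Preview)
Your proof is correct. Both your argument and the paper's reduce to the dividing case and exploit an $\emptyset$-indiscernible witness sequence together with automorphism invariance of $\mu$; the difference is only in the final combinatorial step. The paper argues by contradiction: assuming $\mu(\phi(x,b))>0$, it picks the maximal $r<k$ with $\mu\bigl(\phi(x,b_0)\wedge\cdots\wedge\phi(x,b_r)\bigr)=t>0$, and then notes that the formulas $\psi_j(x)=\phi(x,b_0)\wedge\cdots\wedge\phi(x,b_{r-1})\wedge\phi(x,b_j)$ for $j\geq r$ all have measure $t$ but pairwise intersections of measure $0$, contradicting $\mu(x=x)=1$. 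Your counting bound $N\alpha\leq k-1$ is more direct and avoids the auxiliary choice of $r$; the paper's version, on the other hand, stays entirely at the level of finite additivity on formulas and never needs to speak of integrating a simple function (though, as you remark, that step unwinds immediately into a finite-additivity computation).
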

\begin{proof} Working in the saturated model $\bar M$ we may assume that $\phi(x,b)$ divides over $\emptyset$, witnessed by indiscernible sequence  $(b_{0}, b_{1}, \ldots)$ with $b_{0} = b$ such that 
$\{\phi(x,b_{i}):i<\omega\}$ is inconsistent.  So $\phi(x, b_{0})\wedge \phi(x,b_{1})\wedge \ldots \wedge \phi(x,b_{k})$ is inconsistent for some $k\geq 1$.
Assume for a contradiction that $\mu(\phi(x,b))  > 0$ for some automorphism invariant global Keisler measure $\mu$. Choose $0 \leq r <k$ maximum such that
$\mu(\phi(x,b_{0})\wedge \ldots \wedge \phi(x,b_{r})) = t$ for some $t>0$.  Let $\psi_{j}(x) =  \phi(x,b_{0})\wedge \ldots \wedge \phi(x,b_{r-1}) \wedge \phi(x,b_{j})$ for $j = r, r+1, r+2, \ldots$. Then by indiscernibility, invariance of $\mu$ and choice of $r$, we have that $\mu(\psi_{j}(x))) = t$ for all $j \geq r$, but  $\mu(\psi_{j}(x)\wedge \psi_{j'}(x)) = 0$ for $r\leq j < j'$ --- a contradiction as $\mu(x=x) = 1$. 
\end{proof} 

\begin{Remark}\label{rem stable global measure} Suppose $T$ is stable (and complete in language $L$), and $p(x)$ is a complete type over $\emptyset$. Then there is a global Keisler measure $\mu(x)$  (i.e. over a saturated model $\bar M$) which extends $p(x)$ and is $Aut(\bar M)$-invariant. Moreover $\mu$ is the {\em unique}  $Aut(\bar M)$-invariant global Keisler measure extending $p$.
\end{Remark}  
\begin{proof} Again we give a proof, for completeness.  The reader is referred to Section 2 of Chapter 1 of \cite{Pillay-GST}  for notation and facts that we use.  Fix a finite set $\Delta$ of $L$-formulas of the form $\phi(x,y)$, and consider the collection of $p'(x)|\Delta$ where $p'$ is a global nonforking extension of $p$. We know that there are only finitely many such,  say $p_{1},\ldots,p_{n}$.
Let $\mu_{\Delta}$ be the average of $\{p_{1}, \ldots ,p_{n}\}$, namely for each $\phi(x,y)\in \Delta$ and $b\in M$, $\mu_{\Delta}(\phi(x,b)) =  (1/n) (\sum p_{i}(\phi(x,b)))$ (where $p_{i}(\phi(x,b)) = 1$ if $\phi(x,b)\in p_{i}$ and $0$ otherwise).   

One has to check that $\Delta\subseteq \Delta'$ implies that $\mu_{\Delta'}$ agrees with $\mu_{\Delta}$ on $\Delta$-formulas, so that the directed union of the $\mu_{\Delta}$ gives a global Keisler measure $\mu$.  For this we use transitivity of the action of $Aut({\bar M})$ on the set of global nonforking extensions of $p$. From the definition of $\mu$ and invariance of non-forking, we deduce that $\mu$ is  $Aut(\bar M)$-invariant. 

Uniqueness of $\mu$ follows from  Fact \ref{fact forking implies zero}. 
\end{proof} 

\begin{Corollary}\label{cor nonfork positive measure} Suppose that $T$ is stable and $\phi(x,b)$ is a formula which does not fork over $\emptyset$. Then there is an $Aut(\bar M)$-invariant global Keisler measure giving $\phi(x,b)$ positive measure.
\end{Corollary}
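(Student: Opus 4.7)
The plan is to reduce the assertion to Remark 1.3 by first picking a complete type that realizes $\phi(x,b)$ nonforkingly and then observing that the measure produced by Remark 1.3 assigns it positive mass through the averaging construction.

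First, I would extend $\phi(x,b)$ to a complete global type $q(x) \in S(\bar M)$ which does not fork over $\emptyset$; this is possible by the existence of nonforking extensions (a consequence of simplicity, hence available in the stable context). Let $p(x) = q|\emptyset \in S(\emptyset)$. By Remark 1.3 applied to $p$, there is an $\mathrm{Aut}(\bar M)$-invariant global Keisler measure $\mu$ extending $p$, and $\mu$ is constructed as the directed union of the measures $\mu_\Delta$ obtained by averaging the (finitely many) distinct $\Delta$-restrictions $p_1, \ldots, p_n$ of global nonforking extensions of $p$.

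Now I would exhibit positive $\mu$-mass for $\phi(x,b)$. Choose a finite $\Delta$ containing the formula $\phi(x,y)$. Since $q$ is itself a global nonforking extension of $p$ and $\phi(x,b) \in q$, the restriction $q|\Delta$ coincides with some $p_i$ in the list, and $\phi(x,b) \in p_i$. Hence
\[
\mu(\phi(x,b)) \;=\; \mu_\Delta(\phi(x,b)) \;=\; \frac{1}{n}\sum_{i=1}^{n} p_i(\phi(x,b)) \;\geq\; \frac{1}{n} \;>\; 0,
\]
using that $\mu$ agrees with $\mu_\Delta$ on $\Delta$-formulas by the construction recalled in the proof of Remark 1.3.

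There is no serious obstacle: the two ingredients needed are the existence of a global nonforking extension of $\phi(x,b)$ over $\emptyset$ (a basic feature of stability) and the explicit averaging description of the $\mathrm{Aut}(\bar M)$-invariant measure supplied by Remark 1.3. The only small bookkeeping point is to make sure to choose $\Delta$ containing $\phi(x,y)$ so that the formula $\phi(x,b)$ is visible at the finite level where the averaging takes place; one could equivalently simply note that $q$ itself is one of the global nonforking extensions of $p$, so the finite averages cannot miss a formula in $q$.
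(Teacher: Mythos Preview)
Your proof is correct and follows essentially the same approach as the paper: extend $\phi(x,b)$ to a global nonforking type, restrict to $\emptyset$, and apply Remark 1.3. The paper's proof is terser, simply asserting that the measure from Remark 1.3 gives $\phi(x,b)$ positive measure, whereas you spell out the averaging argument explicitly; your added detail is correct and harmless.
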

\begin{proof} Let $p'$ be a global type which contains $\phi(x,b)$ and does not fork over $\emptyset$, and let $p$ be the restriction of $p'$ to $\emptyset$.  The $Aut(\bar M)$-invariant Keisler measure extending $p$ constructed in Remark \ref{rem stable global measure}  gives $\phi(x,b)$ positive measure.
\end{proof}

A weak version of the corollary above holds in $NIP$ theories using Proposition 4.7 of \cite{NIPII}.

The issue for the current paper is what happens in simple theories, where the role, if any, of Keisler measures was not well understood.   We will expand on some earlier comments. We fix a complete theory $T$, saturated model $\bar M$, sort $S$, and the Boolean algebra $\mathcal B$ of definable (with parameters) in $\bar M$ subsets of the sort $S$. The ideal $I_{f}$ is the collection of such definable sets which fork over $\emptyset$. For any $Aut(\bar M)$-invariant Keisler measure $\mu$ on $S$, let $I_{\mu}$ be the ideal of definable sets with $\mu$-measure $0$.  Fact \ref{fact forking implies zero} says that  $I_{f}\subseteq I_{\mu}$ for all such $\mu$.    In \cite{Hrushovski-approximate}, an $Aut(\bar M)$-invariant ideal $I$ of ${\mathcal B}$ was defined to be an $S_{1}$-ideal if for any $L$-formula $\phi(x,y)$ (where $x$ is of sort $S$) and indiscernible sequence $(b_{n}:n<\omega)$, if  $\phi(x,b_{1})\wedge \phi(x,b_{2}) \in I$, then $\phi(x,b_{1})\in I$.  Such $S1$ ideals appeared in the ``Stabilizer Theorem" from \cite{Hrushovski-approximate}.  Among the analogies between the forking ideal $I_{f}$ and the ideals $I_{\mu}$ is that (i)  $I_{\mu}$ is an $S_{1}$ ideal, and (ii) for simple $T$, $I_{f}$ is an $S_{1}$ ideal \cite{Kim-paper}.    The open problem (raised also by both  the first author and Leo Harrington in personal communications) is whether, in a simple theory $T$ (and working in a fixed sort), $I_{f}$ is the intersection of the $I_{\mu}$ for $\mu$ ranging over invariant global Keisler measures.  In the light of Fact \ref{fact forking implies zero}, this reduces to the question whether (in a simple theory) any formula (with parameters) which does not fork over $\emptyset$, has  $\mu$-measure $>0$ for some invariant measure $\mu$. Of course if there are no invariant Keisler measures on sort $S$, then the question has a negative answer, and Corollary \ref{cor simple and non amen} below gives such an example. However we are also interested in the situation where there do exist (many) invariant measures, namely where $T$ is also amenable in the sense described earlier.  So we prove:

\begin{Theorem}\label{thm: Main Thm 1} There is a simple theory $T$ (of $SU$-rank $1$) which  is amenable, together with a formula $\phi(x,b)$ which does not fork over $\emptyset$ but has measure $0$ for all
automorphism invariant global Keisler measures. 
\end{Theorem}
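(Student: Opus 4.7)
The strategy relies on the following converse-direction strengthening of Fact 1.2: suppose one can produce a formula $\phi(x,b)$ and, for every $n\ge 2$, $\emptyset$-conjugates $b_{1},\dots,b_{n}$ of $b$ with $\phi(x,b_{i})\wedge\phi(x,b_{j})$ algebraic for all $i\ne j$. Then any automorphism-invariant global Keisler measure $\mu$ must satisfy $\mu(\phi(x,b))=0$. Indeed, in a sufficiently saturated model any element whose $\emptyset$-orbit is infinite has $\mu$-measure $0$ (its orbit is a pairwise disjoint family of singletons of equal measure, bounded in total by $1$), so in a theory in which the ambient sort has only non-algebraic $\emptyset$-types every algebraic formula has measure $0$. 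By invariance $\mu(\phi(x,b_{i}))=\mu(\phi(x,b))$, while pairwise conjunctions have measure $0$, whence by finite additivity $n\cdot\mu(\phi(x,b))=\mu(\bigvee_{i}\phi(x,b_{i}))\le 1$, giving $\mu(\phi(x,b))\le 1/n$ for every $n$ and hence $\mu(\phi(x,b))=0$.

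In any simple theory of $SU$-rank $1$ a formula is non-forking over $\emptyset$ iff it has infinitely many realisations, so the task reduces to constructing a simple theory $T$ of $SU$-rank $1$ --- ideally $\omega$-categorical, to guarantee that the sort of $x$ has only non-algebraic $\emptyset$-types --- admitting a definable family $\{\phi(x,b):b\in\psi(\bar M)\}$ of infinite sets, all lying in a single $\emptyset$-orbit of $b$, with pairwise finite (``almost disjoint'') intersections, and with arbitrarily many simultaneously realisable members.

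I would produce such a $T$ as the model companion of a Fraisse-style (or Fraisse--Hrushovski-style) amalgamation class, in line with the family of examples singled out in the introduction in which forking is witnessed by a stable, essentially trivial, reduct. A plausible template is a two-sorted ``incidence geometry'' of points and lines with a binary incidence relation $I(x,b)$, built from the class of finite such structures in which any two distinct lines share at most one common point; the witness is then $\phi(x,b)=I(x,b)$. The main obstacle, and the technical core of the proof, is to verify that the resulting model companion exists, is complete, simple, and of $SU$-rank $1$, and that an appropriate ``free amalgamation'' ternary relation on definable tuples satisfies the Kim--Pillay axioms so as to coincide with non-forking; it is here that the combinatorial constraint (at most one shared point) must be reconciled with the amalgamation required for simplicity. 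Once this is settled, non-algebraicity of $\phi(x,b)$ and the existence, for each $n$, of $n$ mutually ``generic'' lines pairwise sharing at most one point are immediate from genericity of the Fraisse limit, and the measure estimate of the first paragraph closes the argument.
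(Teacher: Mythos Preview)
Your measure argument is fine, but the template you propose fails at the non-forking step. In the Fra\"{\i}ss\'e limit of finite point--line incidence structures in which any two lines share at most one point, the formula $I(x,\ell)$ \emph{divides} over $\emptyset$: by genericity there are infinitely many lines in general position (any three having three distinct pairwise intersection points), and an $\emptyset$-indiscernible sequence $(\ell_i)$ extracted from such a family has $\{I(x,\ell_i):i<\omega\}$ $3$-inconsistent. So $I(x,\ell)$ forks, and if the theory is simple at all the point sort has $SU$-rank at least $2$. More to the point, this reflects a tension in the strategy itself: having, for every $n$, an $n$-tuple of conjugates with pairwise (uniformly) finite $\phi(x,b_i)\wedge\phi(x,b_j)$ yields, via compactness and Ramsey, an $\emptyset$-indiscernible sequence $(b_i)$ with the same pairwise property, and $\phi(x,b)$ is non-dividing only if \emph{every} such sequence has a common realisation --- a rigid ``pencil'' constraint that a generic construction will not enforce.

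The paper avoids this tension by a different mechanism for forcing $\mu=0$. Instead of $n$ almost-disjoint conjugates for every $n$, it puts a free $F_5$-action on the parameter sort $P$ and imposes the universal axiom that for each $a\in P$ the three sets $R(x,f_1(a)),R(x,f_2(a)),R(x,f_3(a))$ are pairwise disjoint and each is contained in $R(x,g_1(a))\cup R(x,g_2(a))$. Since $P$ carries a unique $1$-type over $\emptyset$, invariance gives $3\mu(R(x,a))\le 2\mu(R(x,a))$, hence $\mu(R(x,a))=0$. Only three disjoint conjugates are ever needed, and their disjointness is tied to specific words in the free group: the quantifier-free $2$-type of the pair $(f_1(a),f_2(a))$ records the relation $y=f_2f_1^{-1}x$, which cannot persist along a non-constant indiscernible sequence (indiscernibility would force $f_2f_1^{-1}$ to equal its own square). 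This is exactly what lets $R(x,a)$ remain non-dividing while the measure argument succeeds.
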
 

We now turn to the case of definable groups. 
Recall:
\begin{Definition} Let $G$ be a group definable  (say without parameters) in a structure $M$. Then $G$ is said to be \emph{definably amenable} if there is a Keisler measure on $G$ over $M$ which is invariant under left translation by $G$.  
\end{Definition}

So definable amenability is a function not just of $(G,\cdot)$ but of the ambient structure $M$.
 
Recall from Section 5 of \cite{NIPI} that definable amenability of $G$ depends only on $Th(M)$, not the particular model chosen. The relation with paradoxical decompositions will be discussed in detail in Section \ref{sec 4}.  The group version of Remark \ref{rem stable global measure} is:
\begin{Fact}\label{fact stable gps are dfn amen}  Stable groups are definably amenable. More precisely if $Th(M)$ is stable and $G$ a group definable in $M$, 
then $G$ is definably amenable.  Moreover there is a {\em unique} left invariant Keisler measure on $G$ (over $M$) which is also the unique right invariant Keisler measure.
\end{Fact}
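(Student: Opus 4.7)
The plan is to combine Remark 1.3 with the classical theory of generic types in stable groups (Poizat, Pillay). Recall that a definable subset $X \subseteq G$ is generic if finitely many left-translates of $X$ cover $G$, and a global complete type $p \in S_G(\bar M)$ is generic if every formula in $p$ is. In a stable theory, generic types exist, their common stabilizer under translation is the connected component $G^0$ (the intersection of all definable finite-index subgroups), and the set of global generic types forms a single orbit under left-translation by $G(\bar M)$, naturally a torsor for the compact profinite group $G/G^0$.

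For existence, I would define $\mu$ by imposing two constraints: (i) each coset of a definable finite-index subgroup $H \leq G$ with $[G:H]=n$ gets measure $1/n$, and (ii) every non-generic definable subset of $G$ gets measure $0$. Condition (i) is consistent under refinement by intersections (since indices multiply), determining a finitely additive probability measure on the Boolean subalgebra of $\mathrm{Def}_M(G)$ generated by such cosets; combined with (ii) it extends uniquely to all of $\mathrm{Def}_M(G)$, with the explicit description that $\mu(X)$ equals the Haar measure on $G/G^0$ of the clopen set of cosets on which $X$ is generic. Left-translation invariance of $\mu$ is then inherited from that of the Haar measure on $G/G^0$.

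For uniqueness, any left-invariant Keisler measure $\nu$ on $G$ over $M$ must satisfy (i) by finite additivity on coset partitions, and must satisfy (ii) by the argument of Fact 1.2: if $X$ is non-generic, one extracts (via translation by generic elements and compactness) an infinite sequence of translates of $X$ forming a $k$-inconsistent family, forcing $\nu(X) = 0$. Thus $\nu = \mu$. The same construction carried out with right-translations yields a unique right-invariant measure; as inversion $g\mapsto g^{-1}$ is a definable bijection preserving cosets of finite-index subgroups setwise and preserving the notion of genericity, the two measures coincide. The main technical obstacle is verifying that constraints (i) and (ii) jointly determine a well-defined finitely additive measure on \emph{every} $X \in \mathrm{Def}_M(G)$; this reduces to the stable-group structural fact that any definable subset is uniformly generic or non-generic on each $G^0$-coset, which is the core input from stability.
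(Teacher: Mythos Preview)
Your argument is correct and is precisely the standard construction via generic types and Haar measure on $G/G^{0}$. The paper does not actually prove Fact 1.7 but only gives an ``Explanation'' referring to Corollary 6.10 of \cite{NIPII} and to \cite{CPT1}, both of which carry out essentially the argument you sketch (the latter in the local, formula-by-formula form). Your justification of the key uniqueness step --- that a non-generic $X$ admits a $k$-inconsistent family of translates --- is also right: taking a Morley sequence $(a_i)$ in a global generic type, the stable dichotomy forces $\{i : b \in a_i X\}$ to be finite for every $b$ (since $bX^{-1}$ is non-generic, hence not in the average type), and compactness then yields a uniform bound $k$.

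It is worth noting that the paper later supplies a genuinely different proof of the \emph{existence} half, as Corollary 4.15: rather than constructing $\mu$ explicitly, one observes that for any finite collection of formulas $\phi_i(x,y_i)$ the Boolean algebra generated by all $G$-translates of their instances over a fixed countable model is small (by local type-counting in a stable theory), and then invokes Proposition 4.13 to rule out a $\mathcal{B}$-paradoxical decomposition. That route avoids generic types entirely but yields neither uniqueness nor the explicit identification of $\mu$; your approach delivers both at once.
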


\begin{explanation}
This is well-known but spelled out in detail for the more general case of ``generically stable" groups in Corollary 6.10 of \cite{NIPII}. 
Also it is done explicitly in the local (formula-by-formula) case in \cite{CPT1}.
\end{explanation}

It was asked by several people, including the sixth  author, whether groups definable in models of simple theories are definably amenable. Note that this {\em is} the case for groups definable in pseudofinite fields (or arbitrary pseudofinite theories). Nevertheless, our second main result is:
\begin{Theorem}\label{thm Main theorem 2} There is a simple theory (of $SU$-rank $1$) and a definable group $G$ in it which is NOT definably amenable. 
\end{Theorem}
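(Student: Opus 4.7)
The plan is to construct the group $G$ so that it admits a \emph{definable paradoxical decomposition}: finitely many pairwise disjoint definable sets $A_{1},\ldots,A_{n},B_{1},\ldots,B_{m}$ whose union is $G$, together with group elements $g_{1},\ldots,g_{n},h_{1},\ldots,h_{m}\in G$, such that both $\{g_{i}A_{i}\}_{i\leq n}$ and $\{h_{j}B_{j}\}_{j\leq m}$ are themselves partitions of $G$. Any left-invariant finitely additive probability measure $\mu$ on the Boolean algebra of definable subsets of $G$ would have to satisfy, by left-invariance, $\sum_{i}\mu(A_{i})=\sum_{i}\mu(g_{i}A_{i})=\mu(G)=1$ and likewise $\sum_{j}\mu(B_{j})=1$; but the first decomposition forces $\mu(G)=\sum_{i}\mu(A_{i})+\sum_{j}\mu(B_{j})=2$, a contradiction. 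Thus the entire task reduces to producing such a $G$ inside a simple theory of $SU$-rank $1$.

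For the construction, the paradigm for a definable paradoxical decomposition is Hausdorff--Tarski in the free group $F_{2}=\la a,b\ra$: the cones $S(a),S(a^{-1}),S(b),S(b^{-1})$ of reduced words starting with the respective letters (together with $\{1\}$) partition $F_{2}$, and one has $F_{2}=S(a)\cup aS(a^{-1})=S(b)\cup bS(b^{-1})$. Since $\mathrm{Th}(F_{2})$ in the pure group language is not simple, the cones cannot be taken from $F_{2}$ directly. Instead I would carry out a Fra\"iss\'e-type generic construction, producing the group and the pieces of its paradoxical decomposition simultaneously as the limit of a carefully chosen class of finite structures equipped with a partial group operation and unary predicates $P_{1},\ldots,P_{k}$ encoding the combinatorial identities of the decomposition. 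Once completeness of the limit theory has been established, simplicity and $SU$-rank $1$ should be obtained by exhibiting an abstract ``free amalgamation'' independence relation on the limit and verifying the axioms of the Kim--Pillay independence theorem (invariance, finite and local character, symmetry, transitivity, extension, and the independence theorem over models); by the characterization recalled in the introduction, this forces simplicity and identifies the relation with non-forking, while triviality of this relation gives $SU$-rank $1$.

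The main obstacle is the coexistence, in the amalgamation, of the rigid algebraic constraint that multiplication be totally defined and associative with the rigid combinatorial constraint that the $P_{i}$'s witness paradoxicality through the distinguished witnesses $g_{i},h_{j}$. One needs the predicates to behave rigidly under multiplication by those finitely many witnesses, yet generically with respect to every other group element, so that the free amalgamation property and the independence theorem still go through. Moreover, one must verify that the Fra\"iss\'e limit really is a group and not merely a partial structure, which typically imposes a closure condition on the amalgamation class. I expect the construction to parallel in spirit the generic example produced for Theorem \ref{thm: Main Thm 1}, promoted with an additional layer of a (non-amenable) group law compatible with the generic predicates.
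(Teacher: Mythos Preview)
Your high-level instinct---force non-amenability by building in a definable paradoxical decomposition and then verify simplicity via a Kim--Pillay style independence relation---is exactly right, and it is what the paper does. But two concrete points in your plan do not survive contact with the requirement that the theory be simple.

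First, the specific decomposition you propose cannot live in a simple theory. The $F_{2}$-cones give a classical paradoxical decomposition with $m=n=2$ (two pieces on each side). The paper proves (Proposition~4.10) that any definable $cpd$ with $m=2$ or $n=2$ forces the strict order property: from $G=X_{1}\cup gX_{2}$ with $X_{1},X_{2}$ disjoint and nonempty one gets $X_{2}\subsetneq gX_{2}\subsetneq g^{2}X_{2}\subsetneq\cdots$. So the ``definable Tarski number'' in a simple theory is at least $6$, and the combinatorics you carry over must already be at least $3+3$. This is not a cosmetic change; it is what drives the colouring lemmas in the paper.

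Second, and more seriously, producing the \emph{group law} generically by Fra\"iss\'e amalgamation of partial groups is not a step one can wave through. Associativity and totality of multiplication create algebraic closure that typically destroys free amalgamation and the independence theorem; you flag this yourself, but you do not indicate any mechanism for resolving it. The paper sidesteps the problem entirely: it does \emph{not} build the group generically. It starts from a group already definable in a stable theory of rank $1$, namely $G=SL_{2}(K)$ in $ACF_{0}$, and only genericizes \emph{unary predicates} on $G$. Concretely, one fixes twelve matrices $a(i,j)\in SL_{2}(\Z)$ freely generating a copy of $F_{12}$, and expands by predicates $C_{1},\ldots,C_{4}$ partitioning $SL_{2}$ subject to the universal axiom that for every $x$ and every $i\in[4]$ some $a(i,j)x$ lies in $C_{i}$. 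The combinatorial work (Lemmas~3.2--3.5) is purely about extending ``good colourings'' of a free $F_{12}$-set, and it feeds into an axiomatization of the model companion $T^{*}$; simplicity of $T^{*}$ with forking equal to $ACF$-forking then follows by the Kim--Pillay criterion exactly as you anticipated, but with the hard group-theoretic part supplied by $ACF_{0}$ rather than by the amalgamation. Non-amenability is immediate: each $C_{i}$ has three translates covering $G$, so $\mu(C_{i})\geq 1/3$, yet the four $C_{i}$ are disjoint. (Section~4 also gives a variant with $F_{6}$ and six colours that realizes an honest definable $cpd$ with $m=n=3$.)

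So the fix to your plan is: do not try to build the group; borrow it from $ACF_{0}$, and make only the partition generic.
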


The usual move of expanding a theory by a new sort for a principal homogeneous space ($PHS$) for a definable group yields:
 \begin{Corollary}\label{cor simple and non amen} There is a simple theory which is NOT amenable.  In fact there is a sort $S$ with a unique $1$-type over $\emptyset$, such that there is no global invariant Keisler measure on sort $S$. 
\end{Corollary}

We recall briefly the situation for definable groups in $NIP$ theories.  
First there DO exist non definably amenable groups; such as $SL(2,\R)$ as a group definable in the real field.
Nevertheless there is a very nice theory of definably amenable groups, beginning in \cite{NIPII}, continued in \cite{NIPIII,Chernikov-Pillay-Simon} and brought to a fairly comprehensive conclusion in 
\cite{Chernikov-Simon}.  The latter paper includes a {\em classification} of the translation invariant Keisler measures on definably amenable groups in $NIP$ theories. 

Theorem \ref{thm: Main Thm 1} will be proved in Section \ref{sec 2}.  Theorem \ref{thm Main theorem 2} and Corollary \ref{cor simple and non amen} will be proved in Section \ref{sec 3}.
The constructions of the theories and structures which give these (counter-)examples are a bit complicated from the combinatorial point of view.

There is a general theory of ``definable paradoxical decompositions" from \cite{NIPI}, which gives obstructions to definable amenability of groups.  A general problem is to determine which interesting model-theoretic properties are inconsistent with the existence of a definable paradoxical decomposition.  In Section \ref{sec 4}, we show directly that smallness of $T$ (as well as stability)  is such a property, yielding the definable amenability of groups definable in small theories and in stable theories (although the latter was given earlier in the paper).  We also give a ``simpler" witness to  Theorem \ref{thm Main theorem 2}, in terms of certain invariants related to definable paradoxical decompositions.  Finally we discuss Grothendieck rings of structures, and show the non-triviality of the {\em graded Grothendieck ring} of any structure with small theory. 

\smallskip
Thanks to the referee for many helpful suggestions.

\section{A simple theory where forking is not detected by measures}\label{sec 2}

Here we prove Theorem \ref{thm: Main Thm 1}. 
We first give an overview and then the technical details. 
Recall first that for any group $G$ and a free action of $G$ on a set $P$ we can consider $P$ as a structure in a language with function symbols $f_{g}$ for each $g\in G$.
When $G$ is infinite, all such structures are elementarily equivalent, the theory is strongly minimal and there is a unique $1$-type over $\emptyset$. We will choose $G$ to be the free group $F_5$ on $5$ generators. 
We will add another sort $O$ to the picture and a relation $R\subseteq O\times P$ and find $a_{1}, \ldots ,a_{5}$ in $P$ such that  $R(x,a_{1})$, $R(x,a_{2})$, $R(x,a_{3})$ are disjoint infinite sets, which are contained in the union of $R(x,a_{4})$ and $R(x,a_{5})$.  It will be done sufficiently generically such that there is still a unique $1$-type realized in $P$, and the theory of the structure is simple (of $SU$-rank $1$).  As all of the $a_{i}$  have the same type, any automorphism invariant Keisler measure  (on the sort $O$) will assign the same measure to each of the $R(x,a_{i})$, which will have to be $0$. But $R(x,a_{i})$  (being infinite) does not fork over $\emptyset$. 

\subsection{The universal theory}\label{subs 2.1}
As usual we mix up notation for symbols of the language and their interpretations. 
As above we have two sorts $O$, $P$, and relation $R\subseteq O\times P$.  And it is convenient to only have function symbols for $5$ free generators of $F_{5}$ and their inverses, which we will call $f_{1}^{\pm}$, $f_{2}^{\pm}$,  $f_{3}^{\pm}$, $g_{1}^{\pm}$, and $g_{2}^{\pm}$. We get a language $L$.  Terms corresponds to elements of the free group $F_{5}$, which will act on the sort $P$, via the function symbols. For $a\in P$, let $R_{a}$ denote the subset of $O$ defined by $R(x,a)$.

Then we can express by a collection of universal sentences in $L$ that
\begin{enumerate}[(i)]
\item the map taking $(t,a)\in G\times P$ to $ta\in P$ is a free action of $G$ on $P$,
\item  for all $a\in P$,  the sets (subsets of $O$), $R_{f_{1}(a)}$, $R_{f_{2}(a)}$,  $R_{f_{3}(a)}$ are pairwise disjoint and each is contained in the union of $R_{g_{1}(a)}$ and $R_{g_{2}(a)}$. 
\end{enumerate}
We will call this universal $L$-theory $T$. 

We will  define a theory $T^{*}$ in $L$ which extends $T$ and has quantifier elimination, so will be the model companion of $T$. As usual to show the existence of model companions
one needs to describe, in the parameters, when a quantifier-free formula $\phi(x)$ over a model $M$ of $T$ has a solution in a larger model $N$ of $T$. The key issue is Axiom (ii) above. 
 So some combinatorics is required which will be done in the next section.

\subsection{Colourings and free actions}\label{subs 2.2}
We fix a free action of $F_{5}$ on a set $X$.  
As above, we will denote by $\{f_{1}, f_{2}, f_{3}, g_{1}, g_{2}\}$ a system of free generators for the free group $F_{5}$.  
 There is an induced graph structure on $X$, where we put an edge between $u$ and $v$ if $v = gu$ for $g$ one of the distinguished generators $f_{i}, g_{j}$ or its inverse.  If $u, v\in X$ are distinct, then by a {\em path} between $u$ and $v$, we mean a sequence $u_{0},..,u_{n}$ of distinct elements of $X$ such that $u_{0} = u$, $u_{n} = v$, and $u_{i}, u_{i+1}$ are joined by an edge for $i=0,..,n-1$.  As the action of $F_{5}$ on $X$ is free, there is at most one path between distinct elements $u,v$ of $X$ and we have the corresponding metric $d$. $d(u,v) = 0$ if $u=v$, and is the length (number of edges) of the path between $u,v$ if there is such a path and $= \infty$ otherwise.  So if $d(u,v) = n>0$ it means that there is a (unique) reduced word $w$ of length $n$ in the generators and their inverses such that $wu = v$.  In the case that $u=v$, it is convenient to define $\{u\}$ to be the path between $u$ and $v$, which is of length $0$. 

We will have a similar set-up in Section \ref{subs 3.2} but with $F_{12}$ in place of $F_{5}$.


For $v\in X$, let $B_{n}(v)$, the ball around $v$ of radius $n$, be $\{u\in X: d(v,u) \leq n\}$ and for $V$ a subset of $X$, $B_{n}(V) = \bigcup_{v\in V}B_{n}(v)$.

\begin{Definition}\phantomsection\label{def 2.1}
\begin{enumerate}[(i)]
\item Define $\leq^{*}$ on $X$ by  $u\leq^{*}v$  if there exist $i\in [3]$ and $j\in [2]$ such that $v = g_{j}f_{i}^{-1}u$.
\item Let $\leq$ be the reflexive and transitive closure of $\leq^{*}$, and for $v\in X$, let $U_{v} = \{u\in X: v\leq u\}$. 
\item  The $nth$ level of $U_{v}$ is $\{u\in U_{v}: d(v,u) = 2n\}$. 
\item By a complete tree for $v\in X$ we mean a subset $T$ of $X$ containing $v$ such that for all $u\in T$, and $i\in [3]$ there is $j\in [2]$ such that $g_{j}f_{i}^{-1}(u)\in T$.
\item By a depth $n$ tree for $v\in X$, we restrict (iv) to $T\subseteq B_{2n}(v)$ and require the second clause of (iv) only for $u\in B_{2n-2}(v) \cap T$.
\end{enumerate}
\end{Definition}

\begin{Remark}\phantomsection\label{rem colors trees} 
\begin{enumerate}[(a)]
\item  Explanation of (v): Note that if  $d(v,u) = 2n-2$ then for any $i\in [3]$ and  $j\in [2]$, $g_{j}f_{i}^{-1}u$ has distance at most $2n$ from $v$. 
\item  Any product of words of the form $g_{j}f_{i}^{-1}$  for $i\in [3]$ and $j\in [2]$ will be a reduced word. Hence if $w, w'$ are distinct such reduced words, and $u,v\in X$ then we could not have that both $wu = v$ and $w'u = v$. 
\end{enumerate}
\end{Remark}

\begin{Lemma}\label{lem complete tree disjoint from Y} Suppose $v\in X$, and $Y\subset X$ with $|Y| \leq  n+1$. Suppose there is a depth  $n$ tree $T$ for $v$ with $T\cap Y= \emptyset$. Then there is a complete tree $T'$ for $v$  which is disjoint from $Y$.
\end{Lemma}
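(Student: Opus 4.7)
The strategy is to argue by contradiction: assume no complete tree for $v$ avoiding $Y$ exists, and deduce that $|Y| \geq n+2$, contradicting the hypothesis. Call $u \in X$ \emph{bad} if there is no complete tree for $u$ disjoint from $Y$. A direct unwinding of the definition of completeness gives the recursive characterization: $u$ is bad iff $u \in Y$, or there exists some $i \in [3]$ such that both $g_1 f_i^{-1}(u)$ and $g_2 f_i^{-1}(u)$ are bad. (For the nontrivial direction, given a good child $g_{j_i} f_i^{-1}(u)$ for every $i$, glue their complete trees together with $u$ as a new root.)

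Iterating, define $B_0 = Y$ and $B_{k+1} = B_k \cup \{u : \exists i,\, g_1 f_i^{-1}(u) \in B_k \text{ and } g_2 f_i^{-1}(u) \in B_k\}$; the union $B_\omega = \bigcup_{k<\omega} B_k$ is already closed under the step, so the set of bad points equals $B_\omega$, and each bad $u$ acquires a finite rank $D(u) := \min\{k : u \in B_k\}$. I then prove by induction on $D(u)$ that every depth-$D(u)$ tree for $u$ hits $Y$: the base case $D(u) = 0$ is immediate since $u \in Y$ lies in every tree rooted at $u$; in the inductive step, pick a witness direction $i$ at $u$ (both children in $B_{D(u)-1}$), note that any depth-$D(u)$ tree $T$ for $u$ must contain one of these bad children $c$ (as $T$ must extend in direction $i$ from $u$), and observe that $T \cap U_c$ is a depth-$(D(u)-1)$ tree for the bad point $c$ of rank at most $D(u)-1$, which meets $Y$ by inductive hypothesis. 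Applied to $v$, the hypothesized depth-$n$ tree disjoint from $Y$ forces $D(v) \geq n+1$.

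Unrolling $D(v)$ produces a full binary tree $W$ of depth exactly $D(v) \geq n+1$, rooted at $v$, in which each internal node $u$ has a fixed witness direction $i_u$ with both children $g_1 f_{i_u}^{-1}(u), g_2 f_{i_u}^{-1}(u)$, and every leaf lies in $Y$. A full binary tree of depth $D$ has at least $D+1$ leaves (the minimum is realized by a caterpillar). The final observation is that the leaves of $W$, viewed as elements of $X$, are pairwise distinct: two distinct paths from the root of $W$ correspond to distinct words over the alphabet $\{g_j f_i^{-1} : i \in [3],\, j \in [2]\}$ (differing in at least one factor at their point of divergence), which by Remark 2.2(b) are reduced in $F_5$, and by freeness of the action they give distinct elements of $X$. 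Hence $|Y|$ is at least the number of leaves of $W$, so $|Y| \geq D(v) + 1 \geq n+2$, giving the desired contradiction. The only delicate technical points are the stabilization $B_{\omega+1} = B_\omega$ (immediate since each step involves only two children of a parent) and the reduced-word distinctness of leaves; the rest of the argument is routine bookkeeping.
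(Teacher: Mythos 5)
Your proof is correct, and it takes a genuinely different route from the paper's. The paper argues by direct induction on $n$: from the depth-$n$ tree $T$ it picks a child $c = g_{j}f_{i}^{-1}v$ in $T$, restricts to $T\cap U_{c}$ (a depth-$(n-1)$ tree), and splits on whether $|Y\cap U_{c}|\leq n$ (apply the inductive hypothesis inside $U_{c}$) or $Y\subseteq U_{c}$ (then the sibling subtree $U_{v_{i,j'}}$ misses $Y$ entirely); this builds the complete tree $T'$ explicitly, branch by branch. You instead argue contrapositively: you introduce a rank of badness, show that a depth-$n$ tree disjoint from $Y$ forces $D(v)\geq n+1$, and then unroll the badness witnesses into a full binary tree of depth $D(v)$ whose leaves all land in $Y$, concluding $|Y|\geq D(v)+1$. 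This is a clean counting argument that makes the quantitative bound $|Y|\leq n+1$ transparent --- it is precisely the fact that a full binary tree of depth $D$ has at least $D+1$ leaves, combined with the reduced-word observation (Remark 2.2(b)) and freeness to see the leaves are pairwise distinct in $X$. Two points deserve a bit more care in your write-up: (a) the inclusion $\{\text{bad}\}\subseteq B_{\omega}$ is the nontrivial direction of ``the set of bad points equals $B_{\omega}$''; it needs the coinductive construction of a complete tree for a point outside $B_{\omega}$ by \emph{repeatedly} choosing non-bad children, not just the one-step gluing you mention. (b) In the induction on $D(u)$, you pass from a depth-$(D(u)-1)$ tree for a child $c$ with $D(c)\leq D(u)-1$ to the inductive hypothesis stated for depth-$D(c)$ trees; this tacitly uses that a depth-$m$ tree for $c$ restricts to a depth-$m'$ tree for any $m'\leq m$. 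Both are routine and do not affect correctness, but should be said.
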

\begin{proof} The proof is by induction on $n$.  When $n = 0$, we may assume $Y$ is a singleton $\{x\}$, and $T = \{v\}$ with $v\neq x$.

For $i\in [3]$ and $j\in [2]$ let $v_{i,j} =  g_{j}f_{i}^{-1}v$.   By Remark \ref{rem colors trees}(b), there will be at most one $v_{i,j}$ such that  $v_{i,j}\leq x$.   Hence for each $i\in [3]$ there is $j(i)\in [2]$ such that  $v_{i,j(i)} \nleq x$. Hence also for each $i\in [3]$,  $x\notin U_{v_{i,j(i)}}$.  Hence  $\{v\}\cup \bigcup_{i\in [3]}U_{v_{i,j(i)}}$ is a complete tree for $v$ which is disjoint from $Y = \{x\}$. 

The inductive step:  Suppose $|Y| = n+1$ and $T$ is a depth $n$ tree for $v$ such that  $T\cap Y = \emptyset$  (and $n>0$).  As above denote by $v_{i,j}$, $g_{j}f_{i}^{-1}v$.  Fix $i\in [3]$ and one of the $j$'s $\in [2]$ such that $v_{i,j}\in T$.  Then clearly $T\cap U_{v_{i,j}}$ is a depth $n-1$ tree for $v_{i,j}$ which is disjoint from $Y$. 
\newline
{\em Case 1.} $|Y\cap U_{v_{i,j}}| \leq n$. Then by induction hypothesis, there is complete tree $T_{i}$ for  $v_{i,j}$ which is disjoint from $Y\cap U_{v_{i,j}}$. As $T_{i}\subseteq U_{v_{i,j}}$ it follows that $T_{i}$ is also disjoint from $Y$.
\newline
{\em Case 2.}   $|Y\cap U_{v_{i,j}}| = n+1$.  Namely $Y\subseteq  U_{v_{i,j}}$.  Let $j'\neq j$, $j'\in [2]$. So clearly  $U_{v_{i,j'}}$ is disjoint from $U_{v_{i,j}}$ (again by freeness of the action of $F_{5}$) and so disjoint from $Y$. In this case  define $T_{i}$ to be  $U_{v_{i,j'}}$, a complete tree for $v_{i,j'}$ which is disjoint from $Y$. 

Now let $T' = \{v\} \cup \bigcup_{i\in [3]}T_{i}$.  Then $T$ is disjoint from $Y$ and is a complete tree for $v$. 
\end{proof} 

The motivation  for part (1) of the next definition  is  to use colourings to describe quantifier-free $1$ types over $P$ realized in $O$ in models of $T$.  That is, a colouring $c$ of $P$ with colours $+,-$ will correspond to the quantifier-free type $p(x)$ on $O$ where $R(x,a)\in p(x)$ iff $c(a) = +$.  Conditions (a) and (b) below correspond to Axiom (ii) from the universal theory $T$. 

\begin{Definition}\phantomsection\label{def good color}
\begin{enumerate}[(1)]
\item  Suppose $D\subseteq X$.  By a {\em good colouring} of $D$ we mean a function $c:D \to \{+,-\}$,  such that if $v\in D$ and $c(v) = +$ then
\begin{enumerate}[(a)]
\item for all $i\in [3]$ there is $j\in [2]$ such that $c(g_{j}f_{i}^{-1}(v)) = +$  if $g_{j}f_{i}^{-1}(v) \in D$.
\item and for all $i\neq j \in [3]$, $c(f_{j}f_{i}^{-1}v) = -$, if $f_{j}f_{i}^{-1}v \in D$.
\end{enumerate}
Moreover if $D = X$ we call $c$ a {\em total} good colouring.
\item We say that $v_{1}, v_{2}\in X$ are a {\em conflicting pair}, if there are $w_{1}\in U_{v_{1}}$ and $w_{2}\in U_{v_{2}}$ such that  $w_{2}= f_{j}f_{i}^{-1}w_{1}$ for some $i\neq j \in [3]$. 
\end{enumerate}
\end{Definition}

\begin{Lemma}\phantomsection\label{lem conflicting pair} 
\begin{enumerate}[(i)]
\item  Being a conflicting pair is symmetric.
\item If $v_{1}$ and $v_{2}$ are a conflicting pair, then  there are {\em unique} $w_{1}\in U_{v_{1}}$ and $w_{2}\in U_{v_{2}}$ such that  $w_{2} = f_{j}f_{i}^{-1}w_{1}$ for some $i\neq j\in [3]$.  We call $w_{1}, w_{2}$ {\em the}  conflict points.
\end{enumerate}
\end{Lemma}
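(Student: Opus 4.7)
Part (i) is immediate from the definition. If $w_2 = f_j f_i^{-1} w_1$ with $i, j \in [3]$ and $i \neq j$, then rearranging in $F_5$ gives $w_1 = f_i f_j^{-1} w_2$; taking $w_2 \in U_{v_2}$ as the ``first'' point and $w_1 \in U_{v_1}$ as the ``second'', this exhibits $(v_2, v_1)$ as a conflicting pair with the roles of $i$ and $j$ swapped.

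For (ii), my plan is to translate the statement into a uniqueness claim about reduced words in $F_5$ and then exploit the rigid ``syllable'' structure forced by the generators $g_b f_a^{-1}$. Each element of $U_v$ is of the form $uv$ where $u$ is a product $g_{b_n} f_{a_n}^{-1} \cdots g_{b_1} f_{a_1}^{-1}$ for some $n \geq 0$ (with $a_\ell \in [3]$, $b_\ell \in [2]$); since no two consecutive letters of such a product are inverse to each other, $u$ is automatically a reduced word in $F_5$, and freeness of the action together with the uniqueness of reduced representatives in $F_5$ determines $u$ uniquely from $uv$. Suppose now we had two witnesses to the conflict: $w_2 = f_j f_i^{-1} w_1$ and $w_2' = f_{j'} f_{i'}^{-1} w_1'$. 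Writing $w_k = u_k v_k$ and $w_k' = u_k' v_k$, both witnesses yield the same group element taking $v_1$ to $v_2$, so
\[
u_2^{-1} f_j f_i^{-1} u_1 \;=\; (u_2')^{-1} f_{j'} f_{i'}^{-1} u_1'
\]
as elements of $F_5$.

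The key combinatorial observation is that the left-hand side above is already in reduced form. Assigning to each letter of $F_5$ its ``type'' in $\{f, f^{-1}, g, g^{-1}\}$, the type sequence of $u_2^{-1} f_j f_i^{-1} u_1$, read left to right, is
\[
\underbrace{f, g^{-1}, \ldots, f, g^{-1}}_{\text{from } u_2^{-1}},\; \underbrace{f, f^{-1}}_{\text{middle}},\; \underbrace{g, f^{-1}, \ldots, g, f^{-1}}_{\text{from } u_1}.
\]
Using $i \neq j$, the middle pair does not cancel, and at the two junctions the adjacent types are distinct ($g^{-1}$ next to $f$, and $f^{-1}$ next to $g$), so the whole product is reduced. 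The \emph{only} place in this type sequence where an $f$-type letter is immediately followed by an $f^{-1}$-type letter is at the middle pair. Hence in any decomposition of a given reduced word as $U^{-1} f_J f_I^{-1} U'$ with $U, U'$ of the permitted form (either or both possibly empty), the location of the middle pair is determined by the reduced word alone; reading off letter by letter then forces $u_1 = u_1'$, $u_2 = u_2'$, $i = i'$ and $j = j'$. In particular $w_1 = w_1'$ and $w_2 = w_2'$.

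The main obstacle, as far as I can see, is simply the bookkeeping of the type-pattern argument: one must check that the product is reduced, verify that the $(f, f^{-1})$ transition appears nowhere else inside $u_2^{-1}$ or $u_1$, and handle the edge cases where $u_1$ or $u_2$ is empty. In those degenerate cases the type sequence shortens to e.g.\ $\ldots, f, g^{-1}, f, f^{-1}$ or $f, f^{-1}, g, f^{-1}, \ldots$, but the same observation (that the ordered pair of types $(f, f^{-1})$ occurs at exactly one position) still pins the decomposition down, so no new idea is required beyond the central pattern argument.
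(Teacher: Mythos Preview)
Your proof is correct and follows essentially the same route as the paper's: both arguments use freeness of the action to reduce to a uniqueness statement about the group element $u_2^{-1} f_j f_i^{-1} u_1$ (the paper writes it $y^{-1} f_j f_i^{-1} x$), observe that this product is already a reduced word, and then read off the factors from the reduced form. The paper compresses your type-pattern analysis into the single sentence ``the product $y^{-1} f_j f_i^{-1} x$ is already reduced (as $y^{-1}$ ends and $x$ begins with a $g$-generator); thus $x$ and $y$ are uniquely determined'', leaving implicit exactly the observation you spell out, namely that the consecutive pair of types $(f,f^{-1})$ occurs at a unique position and hence pins down the decomposition.
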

\begin{proof}  (i) is obvious.
\newline
(ii)  Let $w_{1}\in U_{v_{1}}$, $w_{2}\in U_{v_{2}}$ witness that $v_{1}$ and $v_{2}$ are a conflicting pair, namely  $w_{2} = f_{j}f_{i}^{-1}w_{1}$ for some $i\neq j \in [3]$.  Let  $w_{1} = 
xv_{1}$   and $w_{2} = yv_{2}$,  where $x$ and $y$ are products (maybe empty) of pairs of free generators of the form $g_{k}f_{\ell}^{-1}$ (as $w_{1}\in U_{v_{1}}$ and $w_{2}\in U_{v_{2}}$).  Then $v_{2} = y^{-1}f_{j}f_{i}^{-1}xv_{1}$. The product  $y^{-1}f_{j}f_{i}^{-1}x$ is already reduced (as $y^{-1}$ ends and $x$ begins with a $g$-generator).  Thus $x$ and $y$ are uniquely determined, hence $w_{1}$ and $w_{2}$ too. 
\end{proof}

\begin{Proposition}\label{prop extending color} Let $V$ and $W$ be disjoint finite subsets of $X$, both of which have cardinality at most $n$. Let $c: V\cup W \to \{+,-\}$ be a good colouring of  $V\cup W$ given by $c$ is $+$ on $V$ and $-$ on $W$.  Let $N = n(n+1) -2$.  Then there is total  good colouring (i.e. of $X$) extending $c$ if and only if there is good colouring of $B_{N}(V)$ extending the restriction of $c$ to  $B_{N}(V) \cap (V\cup W)$.
\end{Proposition}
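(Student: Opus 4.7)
The ``only if'' direction is immediate: restricting a total good colouring to $B_N(V)$ gives a good colouring extending $c|_{B_N(V)\cap(V\cup W)}$.

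For the ``if'' direction, fix a good colouring $c'$ on $B_N(V)$ extending the required restriction of $c$. The plan is to produce, for each $v\in V$, a complete tree $T'_v\subseteq U_v$ rooted at $v$, and then to define $c^*(u)=+$ if and only if $u\in\bigcup_{v\in V}T'_v$. Completeness of each $T'_v$ then takes care of condition (a) of a good colouring for $c^*$, so the remaining task is to arrange that $T'_v\cap W=\emptyset$ (so that $c^*$ extends $c$ on $W$) and that no pair $(u,u')\in T'_v\times T'_{v'}$ with $v\ne v'\in V$ satisfies $u'=f_j f_i^{-1}u$ for $i\ne j\in[3]$ (which is condition (b)).

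The conflict requirement collapses via Lemma 2.5(ii): each conflicting pair $(v,v')\in V^2$ with $v\ne v'$ yields a unique pair of conflict points $(w_1,w_2)\in U_v\times U_{v'}$, and one can choose to ``cut'' the conflict by excluding $w_1$ from $T'_v$ or $w_2$ from $T'_{v'}$. Condition (b) applied to $c$ on $V\cup W$ rules out $v'=f_j f_i^{-1}v$ for distinct $v,v'\in V$ and $i\ne j$, which guarantees that at most one of the two conflict points coincides with its respective root; assigning each cut to the side whose root is not the conflict point then yields a bad set $Y_v\subseteq X\setminus\{v\}$ consisting of $W$ together with at most $|V|-1$ conflict points, so $|Y_v|\le n+(n-1)=2n-1$. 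Any complete tree $T'_v\subseteq U_v$ for $v$ disjoint from $Y_v$ will suffice.

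By Lemma 2.3 with parameter $|Y_v|-1\le 2n-2$, producing $T'_v$ reduces to exhibiting a depth-$(2n-2)$ tree $T_v$ for $v$ in $X$ with $T_v\cap Y_v=\emptyset$; this is where $c'$ is used and where the main technical obstacle lies. I would build $T_v$ inductively inside $B_N(V)$ by following $c'$'s $+$-labels: at each $+$-node $u$ and each direction $i\in[3]$, condition (a) of $c'$ supplies at least one $j\in[2]$ with $c'(g_j f_i^{-1}(u))=+$, and the aim is to pick such a $j$ that also avoids $Y_v$. The value $N=n(n+1)-2$ is calibrated so that a depth-$(2n-2)$ tree fits inside $B_N(V)$ (since $4n-4\le n(n+1)-2$) and so that the resulting depth slack absorbs the at most $2n-1$ potentially obstructive elements of $Y_v$, via a case analysis modelled on the proof of Lemma 2.3: whenever the forced $c'$-successor at $(u,i)$ lands in $Y_v$, one backtracks and exchanges the branch at some earlier level, substituting the alternative subtree $U_{v_{i,j'}}$ from case (ii) of that proof. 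Once the $T'_v$'s are constructed, $c^*$ extends $c$ (since $V\subseteq\bigcup T'_v$ and $W$ is disjoint from every $T'_v$) and is a good colouring: (a) follows from completeness and (b) from $T'_v\cap Y_v=\emptyset$ together with the uniqueness in Lemma 2.5(ii).
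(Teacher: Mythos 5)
There is a genuine gap, and it lies exactly where you flag the ``main technical obstacle'': you never actually produce the depth-$(2n-2)$ tree for $v$ disjoint from $Y_v$. The ``backtracking'' paragraph is a sketch of an idea, not an argument, and it is not clear it can be made to work as stated, because the substituted subtrees $U_{v_{i,j'}}$ from the proof of Lemma 2.3 are full (infinite) complete trees living outside the $c'$-$+$-region, so the inductive ``follow $c'$'s $+$-labels'' bookkeeping no longer applies to them; one would in effect be re-proving Lemma 2.3 from scratch in a more complicated setting.

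The root cause is that you define $Y_v$ by a combinatorial criterion (``cut at the non-root side'') that makes no reference to $c'$, so there is no reason the set $\{u\in B_N(v):c'(u)=+\}$ should avoid $Y_v$, and you are forced to try to build a tree by hand. The paper sidesteps all of this with a single observation: define $Y$ by putting into it $W$ together with, for each conflicting pair, a conflict point chosen to have $c'$-value $-$ when both lie in $B_N(V)$ (such a choice exists by goodness-axiom (b) for $c'$), or the one outside $B_N(V)$ otherwise. Then $Y\cap B_N(V)$ is entirely $c'$-coloured $-$, so $T=\{u\in B_N(v):c'(u)=+\}$ is automatically a depth-$N/2$ tree for $v$ disjoint from $Y$ --- goodness-axiom (a) for $c'$ gives the depth-tree property, and disjointness is by construction. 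With $|Y|\le n+\binom{n}{2}=N/2+1$, Lemma 2.3 applies directly and no ad-hoc construction is needed. Your per-$v$ bad sets $Y_v$ of size $\le 2n-1$ and depth $2n-2$ are numerically more economical, but without the $c'$-based choice of which conflict point to discard you have not discharged the hypothesis of Lemma 2.3. (A smaller issue: your rule for assigning a cut is ambiguous when neither conflict point is a root of its tree, which is the generic case; you should say explicitly that the choice is arbitrary there.)
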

\begin{proof}   One direction is obvious: if $c'$ is a total good colouring then its restriction to $B_{N}(V)$ of course extends its further restriction to $B_{N}(V)\cap (V\cup W)$. 

For the other direction: suppose $c'$ is a good colouring of $B_{N}(V)$ extending the restriction of $c$ to $B_{N}(V)\cap (V\cup W)$.

Note in passing that $V\subseteq B_{N}(V)$.  We will define a set $Y$ which consists of $W$ together with one element from each pair $(w,w')$ of conflict points which come from a conflicting pair $(v_{1}, v_{2})$ of elements of $V$.  So given such $v_{1}, v_{2}\in V$ and conflict points $w_{1}, w_{2}$:

\noindent{\em Case 1.} Both $w_{1}, w_{2}\in B_{N}(V)$.   Then by the good colouring condition 1(b) (from Definition \ref{def good color}), not both $c'(w_{1})$ and $c'(w_{2})$ equal $+$. So choose one of them, without loss $w_{1}$ such that $c'(w_{1}) = -$ and put $w_{1}$ into $Y$.

\noindent{\em  Case 2.}  At least one of $w_{1}, w_{2}$, without loss $w_{1}$ is NOT in $B_{N}(V)$. Then add $w_{1}$ to $Y$.  

There are at most $n(n-1)/2$  conflicting (unordered) pairs from $V$, and hence  $|Y| \leq  n   +   n(n-1)/2  = n(n+1)/2 =  N/2 + 1$, and by construction $c'(x) = -$ for all $x\in Y\cap B_{N}(V)$. 

Now for each $v\in V$, $T = \{u\in B_{N}(v): c'(u) = +\}$ is a depth $N/2$ tree for $v$ which is disjoint from $Y$ (by definition of a good colouring and the construction of $Y$). 
By Lemma \ref{lem complete tree disjoint from Y} (as $|Y|
\leq N/2  + 1$)  there is, for each $v\in V$, a complete tree $T_{v}$ for $v$ which is disjoint from $Y$. Let us then define a (total) colouring $c''$ of $X$ which has value $+$ on $T_{v}$ for each $v\in V$ and $-$ otherwise. 

As $c$ is $+$ on $V$, and $-$ on $W$ which is contained in $Y$ which is disjoint from each $T_{v}$, $c''$ extends $c$.

\begin{Claim*}   
$c''$ is good. 
\end{Claim*}
\noindent{\em Proof of Claim.} Suppose $c''(u) = +$. So $u\in T_{v}$ for some $v\in V$.  But $T_{v}$ is a complete tree for $v$, so for each $i\in [3]$ there is $j\in [2]$ such that $g_{j}f_{i}^{-1}u\in T_{v}$, whereby $c''(g_{j}f_{i}^{-1}u) = +$.  This gives 1(a) in the definition (Definition \ref{def good color}) of a good colouring.

For 1(b): suppose for a contradiction that $c''(w_{1}) = +$ and $c''(w_2) = +$ for $w_{1}$, $w_{2}$ in $X$ such that $w_{2} = f_{j}f_{i}^{-1}w_{1}$ for some $i\neq j\in [3]$. 
But then $w_{1}\in T_{v_{1}}$ and $w_{2}\in T_{v_{2}}$ for some $v_{1}, v_{2}\in V$, and we see that $w_{1}, w_{2}$ are conflict points for the conflicting pair $v_{1}, v_{2} \in V$.
But by the definition of $Y$, one of $w_{1}, w_{2}$ is in $Y$ and so gets $c''$ colour $-$. A contradiction. 
\end{proof}

\begin{Corollary} For each $v\in X$ there are good colourings $c,c'$ of $X$ such that $c(v) = +$ and $c'(v) = -$. 
\end{Corollary}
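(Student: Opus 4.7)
The plan is to deduce this immediately from Proposition 2.6, handling the two colours separately.

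For $c'(v)=-$, I would simply take $c'$ to be the constant function $-$ on all of $X$. The conditions (a) and (b) in Definition 2.4(1) are triggered only by points of value $+$, so the constant $-$ colouring is vacuously good; no appeal to Proposition 2.6 is needed.

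For $c(v)=+$, I would apply Proposition 2.6 with $V=\{v\}$, $W=\emptyset$, and $n=1$, which gives $N=n(n+1)-2=0$, so $B_N(V)=\{v\}$. The candidate colouring assigning $+$ to $v$ is itself a good colouring of the singleton $\{v\}$: freeness of the $F_5$-action guarantees $g_jf_i^{-1}v\neq v$ and $f_jf_i^{-1}v\neq v$ for $i\neq j$, so the antecedents ``$g_jf_i^{-1}v\in D$'' and ``$f_jf_i^{-1}v\in D$'' in clauses (a) and (b) of Definition 2.4(1) both fail, making them vacuous. This trivially extends itself as a good colouring of $B_N(V)=\{v\}$, so Proposition 2.6 produces a total good colouring extending it, i.e.\ a good colouring $c$ of $X$ with $c(v)=+$.

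There is no real obstacle here: the whole content of the corollary lies in Proposition 2.6, and the corollary is essentially its base case $n=1$. The only small thing to verify is the vacuous-goodness of the singleton colouring, which is immediate from freeness of the action.
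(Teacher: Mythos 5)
Your proof is correct and follows the route the paper intends: Corollary 2.7 is stated immediately after Proposition 2.6 with no proof given, precisely because it is the base case of that proposition (your $n=1$, $N=0$, $B_N(\{v\})=\{v\}$ computation, together with the vacuous goodness of the singleton colouring by freeness of the action). Your observation that the $-$ case is handled even more cheaply by the constant $-$ colouring is a harmless simplification.
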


\subsection{The model companion $T^{*}$}\label{subs 2.3}
We return to the context of Section \ref{subs 2.1}, namely the language $L$ and universal theory $T$. To any element $h$ of $F_{5}$ expressed in terms of the generators and their inverses in reduced form we have a term $t_{h}$ of $L$. Note that if $t$ is a term in nonreduced form then there will be some $h$ such that $t = t_h$ is true in all models of $T$. 

We will give two axiom schema, which in addition to $T$ give a theory $T^{*}$ in the given language.
We will check subsequently that $(T^{*})_{\forall} = T$, and that $T^{*}$ has quantifier elimination (and is complete), so is the model companion of $T$. 

We want to describe which quantifier-free $1$ types over a model $M$ of $T$ can be realized in some extension $N$ of $M$ to a model of $T$, by expressing the existence of solutions of appropriate approximations. There are two kinds of $1$-types: realized by an element of $P$, and realized by an element of $O$. We introduce some notation to  deal with each of these cases. 

Let $p_{i}(z,x)$ for $i\in I$ be a list of all (complete)  quantifier-free types (over $\emptyset$) of pairs $(a,b)$ in models $M$ of $T$ where $a\in O(M)$ and $b\in P(M)$.  So $p_{i}(z,x)$ will be
a maximal consistent (with $T$) set of formulas of the form $R(z,t_{h}(x))$, $\neg R(z,t_{h}(x))$  for $h$ ranging over $F_{5}$.  (The inequalities between $x$ and the $t_{h}(x)$ for $h\neq 1$ will come free from $T$).

 For each $n$, let $\gamma_{n}(x_{1}, \ldots ,x_{n},y_{1},\ldots,y_{n})$ be a quantifier-free $L$-formula expressing the existence of a good colouring $c$ of $B_{N}(\{x_{1},\ldots,x_{n}\})$ such that  $c(x_{i}) = +$ for $i=1,\ldots,n$ and $c(y_{i}) = - $ for  each $y_{i}$ which happens to be in $B_{N}(\{x_{1},\ldots,x_{n}\})$ (where $N = n(n+1) - 2$).

\begin{Axiom}\label{axiom I}
All sentences of the form  
\begin{gather*}
(\forall x_{1},\ldots,x_{n} \in P) (\forall z_{1},\ldots,z_{n}\in O)\Big(\bigwedge_{i\neq j}z_{i}\neq z_{j} \to \\
(\exists x\in P) \big(\bigwedge_{j=1,\ldots,n}\phi_{i_{j}}(z_{j},x) \wedge \bigwedge_{i=1,\ldots,n} x\neq x_{i} \big) \Big),
\end{gather*}
where $n\geq 1$,  $i_{1}, \ldots, i_{n}\in I$ and each $\phi_{i_{j}}(z,x)$ is a finite conjunction of formulas in $p_{i_{j}}(z,x)$. 
\end{Axiom}

\begin{Axiom}\label{axiom II}
All sentences of the form
\begin{gather*}
	(\forall x_{1},\ldots,x_{n},y_{1},\ldots,y_{n}\in P)(\forall z_{1},\ldots,z_{n}\in O)\Big(\gamma_{n}(x_{1},\ldots,x_{n},y_{1}, \ldots, y_{n}) \to \\
	 (\exists z\in O) \big(\bigwedge_{i=1,\ldots,n}(R(z,x_{i})\wedge \neg R(z,y_{i})) \wedge \bigwedge _{i=1,\ldots,n}z\neq z_{i} \big) \Big)
\end{gather*}
for $n \geq 1$.
\end{Axiom}

We define $T^{*}$ to be (the theory axiomatized by) $T$ together with Axiom Schemas \ref{axiom I} and \ref{axiom II}.

\begin{Lemma}\label{lem ecm of T is of T*} Any existentially closed model of $T$ is a model of $T^{*}$. In particular $T^{*}$ is consistent and $(T^{*})_{\forall} = T$. 
\end{Lemma}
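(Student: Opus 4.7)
The plan is to show that every existentially closed (ec) model $M$ of $T$ satisfies Axiom Schemas I and II. Since $T$ is a universal theory with models, ec models exist and every model of $T$ embeds into some ec model; this immediately gives consistency of $T^*$, and by the standard argument that universal sentences descend to substructures, yields $(T^*)_\forall = T$.

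For Axiom Schema I, given parameters $x_1, \ldots, x_n \in P(M)$, distinct $z_1, \ldots, z_n \in O(M)$, and finite conjunctions $\phi_{i_j}(z,x) \subseteq p_{i_j}(z,x)$, I construct an extension $N \supseteq M$ of $T$ containing a witness $a$. Take $P(N) = P(M) \sqcup (F_5 \cdot a)$ with $F_5 \cdot a$ a fresh free orbit, and $O(N) = O(M)$. Each complete quantifier-free type $p_{i_j}(z,x)$ is realized in some model of $T$ and so determines a total good colouring $c^{(j)}$ of the orbit through its second coordinate (with $+$ where $R$ holds); transport $c^{(j)}$ to $F_5 \cdot a$ and set $R^N(z_j, b) \Leftrightarrow c^{(j)}(b) = +$ for $b \in F_5 \cdot a$. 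For the remaining $z \in O(M)$, declare $R^N(z,b)$ false for every $b \in F_5 \cdot a$. Each slice $c_z(\cdot) = R^N(z, \cdot)$ is then good on every $F_5$-orbit of $P(N)$, so axiom (ii) holds; axiom (i) is immediate, whence $N \models T$. The element $a$ witnesses the existential conclusion with $a \neq x_i$ for each $i$ since $a$ lies in a new orbit. Existential closedness of $M$ in $N$ produces such a witness already in $M$.

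For Axiom Schema II, suppose $\gamma_n(x_1,\ldots,x_n, y_1,\ldots,y_n)$ holds in $M$ and $z_1,\ldots,z_n \in O(M)$ are given. Let $V = \{x_1,\ldots,x_n\}$, $W = \{y_1,\ldots,y_n\}$, and let $c : V \cup W \to \{+,-\}$ be $+$ on $V$ and $-$ on $W$. I first verify that $c$ itself is good (the hypothesis of Proposition 2.6): for $v \in V$ and any $i,j$, the elements $g_j f_i^{-1} v$ and $f_j f_i^{-1} v$ lie in $B_2(v) \subseteq B_N(V)$ whenever $n \geq 2$ (so $N \geq 4$), so the local conditions on $c|_{V \cup W}$ follow from those of the good colouring of $B_N(V)$ provided by $\gamma_n$; the edge case $n = 1$, $N = 0$ is handled directly using uniqueness of reduced words for the free $F_5$-action. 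Invoking Proposition 2.6, extend $c$ to a total good colouring $c'$ of $X = P(M)$. Adjoin a fresh point $z$ to $O(M)$ to form $N$ and define $R^N(z, b) \Leftrightarrow c'(b) = +$; for the other $z_0 \in O(M)$, leave $R^N$ unchanged. Each $c_{z_0}$ is then good, so $N \models T$; moreover $z \neq z_j$ for every $j$, and $R(z, x_i) \wedge \neg R(z, y_i)$ holds for each $i$ by construction of $c'$. Existential closedness of $M$ yields such a $z$ already in $M$.

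The main obstacle is the preliminary step in Axiom Schema II: checking that $c|_{V \cup W}$ is itself a good colouring, so that Proposition 2.6 applies. Once this reduction to the combinatorial content of Proposition 2.6 is in place, both schemas reduce to the one-step amalgamation constructions above followed by an appeal to existential closedness.
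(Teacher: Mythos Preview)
Your proof is correct and follows essentially the same approach as the paper: for each axiom schema you build a one-step extension of $M$ (a fresh $F_5$-orbit in $P$ for Schema I, a fresh point in $O$ for Schema II) that is a model of $T$ and contains the required witness, then invoke existential closedness. The only difference is that you take extra care to verify the hypothesis of Proposition~2.6 (that the partial colouring $c|_{V\cup W}$ is itself good) before applying it, whereas the paper applies the proposition directly; your verification is correct, and in fact this hypothesis is forced by $\gamma_n$, so the paper's terser application is also justified --- calling this ``the main obstacle'' somewhat overstates its role.
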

\begin{proof} 
Let $M$ be an existentially closed model of $T$.  Consider an axiom
\begin{gather*}
	(\forall x_{1},\ldots,x_{n} \in P) (\forall z_{1},\ldots,z_{n}\in O)\Big(\bigwedge_{i\neq j}z_{i}\neq z_{j} \to\\
	 (\exists x\in P) \big( \bigwedge_{j=1,\ldots,n}\phi_{i_{j}}(z_{j},x) \wedge \bigwedge_{i=1,\ldots,n} x\neq x_{i} \big) \Big)
\end{gather*}
belonging to Axiom Schema \ref{axiom I}.   

Choose $a_{1},\ldots,a_{n}\in O(M)$, which we may assume to be distinct.  We will build a certain model $M'$ of $T$ containing $M$. Let  $X$ be a principal homogeneous space for $F_{5}$ (disjoint from $P(M)$) with a distinguished point $b$. Let $P(M') = P(M) \cup X$ with the natural action of $F_{5}$. For $h \in F_5$, we put $(a_{j},hb)\in R$ iff  $R(z,t_{h}(x))\in p_{i_{j}}(z,x)$. And for any other $a\in O(M)$, we put $\neg R(a,c)$ for any $c\in X$.   We also define $O(M')$ to be $O(M)$.
Then it can be checked that $M'$ is a model of $T$.
Now $b$ witnesses that  the formula  $(\exists x\in P)(\bigwedge_{j=1,\ldots,n}\phi_{i_{j}}(a_{j},x) \wedge \bigwedge_{i=1,\ldots,n} x\neq b_{i}))$  for any  $b_{1},\ldots,b_{n}\in P(M)$ holds in $M'$. As $M$ is existentially closed in $M'$, this formula also holds in $M$.  We have shown that $M$ is a model of Axiom Schema \ref{axiom I}. 

Now let  
\begin{gather*}
	(\forall x_{1},\ldots,x_{n},y_{1},\ldots,y_{n}\in P)(\forall z_{1},\ldots,z_{n}\in O) \Big( \gamma_{n}(x_{1},\ldots,x_{n},y_{1}, \ldots ,y_{n}) \to \\
	 (\exists z\in O) \big( \bigwedge_{i=1,\ldots,n}(R(z,x_{i})\wedge \neg R(z,y_{i})) \wedge \bigwedge _{i=1,\ldots,n}z\neq z_{i} \big) \Big)
\end{gather*}
be a sentence in Axiom Schema \ref{axiom II}. 

Choose $b_{1}, \ldots, b_{n}, c_{1},\ldots,c_{n} \in P(M)$.  We will add a new point $\star$ to the $O$ sort to get a structure $M'$ extending $M$.  Let us assume that $M\models \gamma_{n}(b_{1},\ldots,b_{n}, c_{1},\ldots,c_{n})$. By Proposition \ref{prop extending color}, there is a good colouring $c$ of $P(M)$ such that $c(b_{i}) = +$ and $c(c_{i}) = -$ for $i=1, \ldots, n$.  
For $d\in P(M) = P(M')$ we define $R(\star, d)$  iff  $c(d) = +$.  Then $M'$ is a model of $T$, and again as $M$ is existentially closed in $M'$,  $(\exists z\in O)(\bigwedge_{i=1,\ldots,n}(R(z,b_{i})\wedge \neg R(z,c_{i})) \wedge \bigwedge _{i=1,\ldots,n}z\neq a_{i}))$  is true in $M$, for any $a_{1},\ldots,a_{n}\in O(M)$.
So $M$ is a model of Axiom Schema \ref{axiom II}. 
\end{proof} 

\begin{Proposition}\phantomsection\label{prop: Tstar in Sec 2} 
\begin{enumerate}[(i)]
\item $T^{*}$ is complete with quantifier elimination,
\item $T^{*}$ is the model companion of $T$,
\item for any model $M$ of $T^{*}$  and $A\subseteq M$, the algebraic closure of $A$ in $M$ (in the sense of the structure $M$) is precisely $\langle A \rangle$, the substructure of $M$ generated by $A$.
\end{enumerate}
\end{Proposition}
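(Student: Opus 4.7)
The plan is to establish (i) by a standard back-and-forth argument proving QE, from which completeness follows by running the back-and-forth starting at the empty partial isomorphism. Part (ii) then combines QE with Lemma 2.8, and (iii) falls out of the same mechanism.

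For QE, let $M, N$ be $\aleph_1$-saturated models of $T^{*}$, and let $f : A \to B$ be an isomorphism between countable substructures. The goal is to extend $f$ to include any prescribed $m \in M \setminus A$. If $m \in P$, then since $A = \langle A \rangle$ is closed under the $F_{5}$-action, $m$ lies in an $F_5$-orbit disjoint from $A \cap P$, and its quantifier-free type over $A$ is determined by a choice of some $p_{i(a)}(z,x)$ for each $a \in A \cap O$ (encoding the $R$-relation with the whole orbit of $m$). Axiom Schema~I applied in $N$ yields, for any finite fragment and any finite ``avoid'' list, an element of $P(N)$ witnessing a finite approximation of the image type; $\aleph_1$-saturation then supplies a single $m' \in P(N)$ realizing the full image type, and $f$ extends by $t_{h}(m) \mapsto t_{h}(m')$. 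If instead $m \in O$, the quantifier-free type of $m$ over $A$ records $R(m,b)$ for each $b \in A \cap P$; the assignment $c(b) = +$ iff $R(m,b)$ is the restriction to $A \cap P$ of the good colouring induced by $m$ on all of $P(M)$, which is good because $M \models T$. Hence for any finite witnesses $b_{1},\dots,b_{n}$ (with $R(m,b_{i})$) and $c_{1},\dots,c_{n}$ (with $\neg R(m,c_{i})$), the further restriction of this colouring to $B_{N}(\{b_{1},\dots,b_{n}\})$ is again a good colouring with the required values, so $M \models \gamma_{n}(\bar b,\bar c)$; quantifier-freeness of $\gamma_{n}$ transfers this to $N$, Axiom Schema~II yields the required finite approximation in $O(N)$, and saturation completes the extension. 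Running this back-and-forth from the empty partial isomorphism gives that any two $\aleph_1$-saturated models of $T^{*}$ of the same cardinality are isomorphic, whence $T^{*}$ is complete; and the general back-and-forth yields QE.

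For (ii), Lemma 2.8 already gives $(T^{*})_{\forall} = T$ and that every existentially closed model of $T$ models $T^{*}$. Since every model of $T$ embeds into an existentially closed model of $T$, every model of $T$ embeds into a model of $T^{*}$. Combined with the model-completeness supplied by QE, this is precisely the definition of model companion. For (iii), the inclusion $\langle A \rangle \subseteq \mathrm{acl}(A)$ is automatic. For the converse, given $m \in M \setminus \langle A \rangle$, I iteratively apply the appropriate schema (I if $m \in P$, II if $m \in O$), each time enlarging the ``avoid'' list by the realizations already produced; this yields infinitely many realizations of $\mathrm{qftp}(m/A)$ in a sufficiently saturated elementary extension. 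By (i) these are all realizations of the complete type of $m$ over $A$, so $m$ has infinitely many conjugates over $A$ and $m \notin \mathrm{acl}(A)$.

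The main obstacle is the $O$-sort case of the QE back-and-forth: one must verify $\gamma_{n}(\bar b,\bar c)$ holds in $M$ for the relevant parameters. The key observation is that the good colouring conditions (a) and (b) of Definition~2.4 are strictly local, referring only to neighbours within Cayley distance $2$, so the restriction of any good colouring to any subset remains a good colouring of that subset with all positive and (in-range) negative values preserved. The combinatorics of Section~2.2 --- the choice $N = n(n+1) - 2$, the tree-extension Lemma~2.3, and Proposition~2.6 --- was arranged precisely so that this local verification suffices to invoke Axiom Schema~II, and then to feed the back-and-forth.
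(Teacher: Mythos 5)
Your proposal follows essentially the same route as the paper: a back-and-forth argument between saturated models to get QE and completeness, then combining QE (model completeness) with Lemma 2.8 for (ii), and the ``infinitely many realizations via the axiom schemas plus saturation'' argument for (iii). The one place you add genuine content is in justifying why $M\models\gamma_n(\bar b,\bar c)$ in the $O$-sort step: the paper simply asserts this, whereas you correctly observe that the good colouring induced by $m$ on $P(M)$ restricts to a good colouring of $B_N(\{b_1,\dots,b_n\})$ (goodness being a local, downward-closed condition), which is exactly what $\gamma_n$ asks for. Minor imprecision: running the back-and-forth from the empty partial isomorphism yields elementary equivalence of any two $\aleph_1$-saturated models directly (hence completeness); the claim that they are ``isomorphic if of the same cardinality'' only works when that cardinality is $\aleph_1$, but this does not affect the conclusion.
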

\begin{proof} For (i) we use the well-known criterion that for $M, N$  $\omega$-saturated models of $T^{*}$, the collection of partial isomorphisms
between finitely generated substructures of $M$ and $N$ is nonempty and has the back-and-forth property. 

First to show nonemptiness: Let $a\in O(M)$ and $b\in O(N)$. Then $\{a\}$, $\{b\}$ are isomorphic substructures of $M$ and $N$.

Now suppose $f$ is an isomorphism  between finitely generated substructures $M_{0}$ and $N_{0}$ of $M$ and $N$ respectively.
Let $a\in M$. We want to extend $f$ to $g$ with $a\in dom(g)$. We may assume $a\notin M_{0}$. 

\noindent
{\em Case 1.}  $a\in P(M)$.

\noindent Let $p(x) = qftp(a/M_{0})$  (quantifier-free type of $a$ over $M_{0}$).  For each $b\in O(M_0)$, let $p_{b}(z,x) = qftp(b,a/\emptyset)$. 
Then $p(x)$ is axiomatized by  $\{x\neq c: c\in P(M_0)\} \cup \bigcup_{b\in O(M_0)}p_{b}(b,x)$.
Now $f(p)$ is precisely  $\{x\neq d: d\in P(N_0)\} \cup \bigcup_{b\in O(M_0)}p_{b}(f(b),x)$. 

By Axiom Schema \ref{axiom I} and $\omega$-saturation, $f(p)$ is realized in $N$.  

\noindent
{\em Case 2.}  $a\in O(M)$. 

\noindent Let $q(z) = qftp(a/M_{0})$.  Then $f(q) = \{z\neq d: d\in O(N_0)\} \cup  \{R(z,f(b)):  b\in P(M_0)$, $M\models R(a,b)\} \cup \{\neg R(z,f(b)): b\in P(M_0)$, $M\models \neg R(a,b)\}$. 
Choose $b_{1}, \ldots,b_{n}\in P(M_0)$ such that $M\models R(a,b_{i})$, and  $c_{1},\ldots,c_{n}\in P(M_0)$ such that $M\models \neg R(a,c_{i})$  (if such exist).
Then as $M$ is a model of $T^{*}$ (and so of $T$) we have $M\models \gamma_{n}(b_{1},\ldots,b_{n},c_{1},\ldots,c_{n})$, whereby  
\[N\models \gamma_{n}(f(b_{1}),\ldots,f(b_{n}), f(c_{1}),\ldots,f(c_{n})).\]
So by Axiom Schema \ref{axiom II} and the $\omega$-saturation of $N$, $f(q)$ is realized in $N$.

\noindent
(ii) follows immediately as $T^{*}$ is model-complete (by (i)) and  $(T^{*})_{\forall} = T$ (by Lemma \ref{lem ecm of T is of T*}).

\noindent
(iii)  By quantifier-elimination,    we have to show that for any small substructure $M_{0}$ of a (saturated) model of $T^{*}$, and $a\in M\setminus M_{0}$, $qftp(a/M_{0})$ has infinitely many realizations.  For $a\in P(M)$ this is by Axiom Schema \ref{axiom I} and saturation. And for $a\in O(M)$ this is by Axiom Schema \ref{axiom II} and saturation. 
\end{proof}

\subsection{Simplicity and the proof of Theorem \ref{thm: Main Thm 1}}\label{subs 2.4}

We now work in a saturated model $\bar M$ of the complete theory $T^{*}$ defined earlier. 

\begin{Proposition}\label{prop: div in Sec 2} Let $a$ be an element (so an element of  $O(\bar M)$ or of $P(\bar M)$), and $B$ a (small) subset.  Then $a\notin acl(B)$ implies that $tp(a/B)$ does not divide over $\emptyset$.
\end{Proposition}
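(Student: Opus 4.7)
The plan is to combine the quantifier elimination for $T^{*}$ (Proposition~\ref{prop: Tstar in Sec 2}) with Axiom Schemas~I and II of Section~2.3 to directly construct a realization of the translated type. By quantifier elimination any formula in $tp(a/B)$ may be taken quantifier-free, and by compactness it suffices to show, for each quantifier-free $\phi(x,b)\in tp(a/B)$ with $b$ a finite tuple from $B$ and each $\emptyset$-indiscernible sequence $(b_{i})_{i<\omega}$ with $b_{0}=b$, that the partial type $\{\phi(x,b_{i}):i<\omega\}$ is consistent in $\bar M$. The hypothesis $a\notin acl(B)=\la B\ra$ will be used to place the realization in a generic position; we split on the sort of $a$.

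If $a\in P(\bar M)$, then $\phi(x,b)$ is a conjunction of inequalities $x\neq t_{h}(c)$ for finitely many $c\in b\cap P$, $h\in F_{5}$, together with atomic conditions $R^{\epsilon_{\beta}}(d_{\beta},t_{h_{\beta}}(x))$ for finitely many $d_{\beta}\in b\cap O$, $h_{\beta}\in F_{5}$, $\epsilon_{\beta}\in\{+,-\}$. I would realize $\{\phi(x,b_{i}):i<\omega\}$ by placing $a'$ in a fresh $F_{5}$-orbit disjoint from $\bigcup_{i}F_{5}\cdot(b_{i}\cap P)$ and prescribing $R$-values on the orbit of $a'$ so that $R^{\epsilon_{\beta}}(d_{\beta}^{i},t_{h_{\beta}}(a'))$ holds for all $i,\beta$; Axiom Schema~I together with saturation then provides such $a'$. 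The only consistency check is that if $d_{\beta}^{i}=d_{\beta'}^{i'}$ and $h_{\beta}=h_{\beta'}$ then $\epsilon_{\beta}=\epsilon_{\beta'}$, which follows from the rigid pattern of equalities in an $\emptyset$-indiscernible sequence: any such coincidence either already identifies two positions inside $b_{0}$ (so $\epsilon_{\beta}=\epsilon_{\beta'}$ trivially) or propagates along the sequence in a way inconsistent with order-indiscernibility, hence does not occur.

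If $a\in O(\bar M)$, then $\phi(x,b)$ is a conjunction of inequalities $x\neq d$ for $d\in b\cap O$ and conditions $R^{\epsilon_{\beta}}(x,t_{h_{\beta}}(c_{\beta}))$ for $c_{\beta}\in b\cap P$. I would invoke Axiom Schema~II: the realizability of each finite sub-conjunction of $\{\phi(x,b_{i})\}$ reduces to producing a good colouring of a ball $B_{N}(V)$ around the set $V$ of positive positions, consistent with the negative positions $W$. Since $a$ itself realizes $\phi(x,b_{0})$, the assignment $c(u)=+$ iff $R(a,u)$ is a good colouring of $F_{5}\cdot(b_{0}\cap P)$; by indiscernibility the analogous good colourings on each $F_{5}\cdot(b_{i}\cap P)$ agree on their intersections (again by the equality-pattern argument), and glue to a partial good colouring on $F_{5}\cdot\bigcup_{i}(b_{i}\cap P)$. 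Extending this by assigning $-$ to every other $F_{5}$-orbit of $X$ yields a total good colouring of $X$, and Proposition~2.6 then gives the required good colouring of $B_{N}(V)$, so Axiom Schema~II produces the desired $a'\in O$.

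The main technical obstacle, common to both cases, is the equality-pattern analysis for $\emptyset$-indiscernible sequences of tuples in $O\cup P$: one must check that any identification of positions across different $b_{i}$'s is either harmless (forcing the relevant specifications to agree) or already ruled out by order-indiscernibility together with the freeness of the $F_{5}$-action. Once this combinatorial point is established, the rest of the proof is a straightforward assembly of Axiom Schemas~I and II with the colouring machinery of Section~2.2.
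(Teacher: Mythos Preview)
Your proposal is correct in outline and rests on the same underlying idea as the paper's proof---copy the $R$-pattern that $a$ has against $b_0$ over to every $b_i$, and check that the resulting specification is internally consistent by the equality-pattern analysis you describe. The difference is one of packaging rather than substance.

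The paper avoids invoking Axiom Schemas~I and~II and the colouring machinery (Proposition~2.6) altogether. Instead, it observes that since $B$ may be taken to be a substructure enumerated as $b_0$, the union $M_0=\bigcup_i b_i$ is itself a substructure of $\bar M$, hence a model of the universal theory $T$. One then builds an \emph{abstract} $L$-structure $M_1\supseteq M_0$ by adjoining a single new point (a fresh $F_5$-orbit $\{\star_g:g\in F_5\}$ in the $P$-case, or a single element $\star$ in the $O$-case) and declaring the $R$-relation between the new point(s) and each $c\in b_n$ by looking at what holds between $a$ and the corresponding coordinate of $b_0$. One checks directly that $M_1\models T$; then quantifier elimination and saturation embed $M_1$ into $\bar M$ over $M_0$, and the image of the new point realizes every $p(x,b_i)$ simultaneously. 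This bypasses the finite-approximation step via $\gamma_n$ and Proposition~2.6: there is no need to manufacture a good colouring of a ball, because the total colouring on $P(M_0)$ induced by $\star$ is good simply because it was copied coordinatewise from the good colouring that $a$ induces on $P(B)$.

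Your equality-pattern analysis is exactly the content of the paper's ``check that $M_1$ is a model of $T$'': one must verify that ``the corresponding element of $b_0$'' is well-defined when $c$ occurs at different positions in different $b_n$'s, and this is where indiscernibility is used. So you have correctly located the only nontrivial point; the paper's formulation just makes the verification shorter by working with the universal theory $T$ directly rather than threading the argument through the axiom schemas of $T^*$.
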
 
\begin{proof}
We may assume that $B$ is a substructure, enumerated by an infinite tuple $b_{0}$.  Let $I = (b_{0}, b_{1}, b_{2}, \ldots)$ be an indiscernible sequence. Note that $\bigcup I$ is a substructure, say $M_{0}$, of $\bar M$. 

Let $p(x,b_{0}) = tp(a/b_{0})$ with $a \notin B$. 

\noindent
{\em Case 1.}  $a\in P(\bar M)$. 

\noindent Define a new structure $M_{1}$ extending $M_0$, by adjoining new elements $\{\star_{g}:g\in F_{5}\}$ satisfying $P$, and for any element $c$  in some $b_{n}$  such that $O(c)$, define $R$ to hold of  $(c,\star_{g})$ iff  the corresponding element of  $b_{0}$ is in the relation $R$ with $t_{g}(a)$. Also define the $f_{i}^{\pm}$ and $g_{j}^{\pm}$ tautologically on $\{\star_{g}:g\in F_{5}\}$. 
Then check that $M_{1}$ is a model of $T$, so by quantifier elimination and saturation of $\bar M$ we may assume that $M_{1}$ is an extension of $M_{0}$ inside $\bar M$. And we see that 
$\star_{e}$ realizes $p(x,b_{i})$ for all $i$.  

Hence $p(x,b_{0})$ does not divide over $\emptyset$.

\noindent
{\em Case 2.} $a\in O(\bar M)$. 

\noindent Do the analogous thing: define an $L$-structure extending $M_{0}$ with a single new element $\star$ which is in $O$ and with $R(\star,c)$ for $c$ in some $b_{n}$ (such that $P(c)$)  iff $a$ is $R$-related to the corresponding element of $b_{0}$.  Again check that we get a model of $T$, so can be assumed to live in $\bar M$ over $M_{0}$ and $\star$ realizes $p(x,b_{i})$ for all i.
\end{proof} 

\begin{Corollary}
\begin{enumerate}[(i)]
\item  $T^{*}$ is simple and of $SU$-rank $1$ (each of the sorts $O$, $P$ has $SU$-rank $1$). 
\item For all tuples $a$, $b$ and  subset $A$ (of $\bar M$), $a$ is independent from $b$ over $A$ iff  $\langle aA \rangle \cap \langle bA \rangle =  \langle A \rangle$. 
\item Each of the sorts has a unique $1$-type over $\emptyset$.
\end{enumerate} 
\end{Corollary}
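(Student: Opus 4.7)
The plan is to derive all three parts from quantifier elimination (Proposition \ref{prop: Tstar in Sec 2}) and the non-dividing result (Proposition \ref{prop: div in Sec 2}). I would start with (iii), which is the easiest: by QE it suffices to check uniqueness of the quantifier-free $1$-type over $\emptyset$ in each sort. In sort $O$ there is no atomic datum about a single variable beyond $O(x)$. In sort $P$ the qf type is determined by which equations $t_h(x)=x$ hold for $h\in F_5$, and by freeness of the action (axiom (i) of $T$) this is exactly $h=e$. So each sort carries a unique $1$-type over $\emptyset$.

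For (i) and (ii) I would invoke the Kim-Pillay criterion by verifying that the ternary relation $a\downarrow_A b :\Leftrightarrow \langle aA\rangle\cap\langle bA\rangle=\langle A\rangle$ is an abstract independence relation on $T^*$; this simultaneously gives simplicity and identifies $\downarrow$ with nonforking, yielding (i) and (ii). Invariance, symmetry, monotonicity, normality, and finite character are immediate from the algebraic definition. Transitivity and base monotonicity follow by a direct manipulation with generated substructures, using that for a single element the substructure generated over $A$ is just $\langle A\rangle$ augmented either by the $F_5$-orbit (sort $P$) or by the singleton (sort $O$). Local character is straightforward since for any finite tuple $a$ the substructure $\langle aB\rangle$ is countable, so $\langle aB\rangle\cap B$ lies in a countable $A_0\subseteq B$. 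Extension is handled by combining Axiom Schemas I and II with saturation to realize a given qf type ``freshly'' over a larger set, avoiding unintended coincidences with the generated substructure.

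The main technical obstacle will be the Independence Theorem over a model: given $a_1\equiv_M a_2$ with $a_1\downarrow_M b$, $a_2\downarrow_M c$, and $b\downarrow_M c$, one must produce a single $a$ realizing $tp(a_1/Mb)\cup tp(a_2/Mc)$ with $a\downarrow_M bc$. By QE this reduces to a combinatorial amalgamation problem. For an element of sort $P$ it is straightforward: freeness of the $F_5$-action together with $\langle Mb\rangle\cap\langle Mc\rangle=\langle M\rangle$ lets one freely combine the two orbit data into a new free $F_5$-orbit. For sort $O$, the qf types of $a_1$ over $Mb$ and of $a_2$ over $Mc$ give good $+/-$ colourings of (initial portions of) $\langle Mb\rangle$ and $\langle Mc\rangle$ which agree on $\langle M\rangle$ since $a_1\equiv_M a_2$; these must be amalgamated into a good colouring of $\langle Mbc\rangle$, and the ability to do so is essentially the content of the colouring-extension Proposition 2.6, applied in the form that local compatibility of the constraints on the disjoint ``halves'' $\langle Mb\rangle$ and $\langle Mc\rangle$ implies global realizability over the amalgam.

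Once simplicity and the identification of $\downarrow$ with nonforking are in place, the $SU$-rank $1$ claim in (i) follows from Proposition \ref{prop: div in Sec 2} combined with monotonicity of nonforking. A nonalgebraic complete $1$-type $q$ over any $B$ has a realization $a\notin acl(B)$, so by that proposition $q$ does not divide over $\emptyset$, hence (by simplicity) does not fork over $\emptyset$, and so by monotonicity does not fork over any $A\subseteq B$. Therefore every forking extension of a complete $1$-type is algebraic, giving each sort $SU$-rank exactly $1$.
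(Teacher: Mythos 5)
Your proposal is correct, but it takes a genuinely different (and more explicit) route than the paper's proof. The paper's treatment of (i) is quite terse: it invokes Proposition \ref{prop: div in Sec 2} directly, asserting that ``every complete $1$-type over any set is either algebraic or does not divide over $\emptyset$'' suffices to conclude simplicity, leaving the reduction from tuples to singletons (and the resulting local character for arbitrary types) to the reader. It then reads off $SU$-rank $1$, deduces (ii) ``by forking calculus'' together with the observation that $\langle B\rangle=\bigcup_{b\in B}\langle b\rangle$ in a unary-function language, and disposes of (iii) by quantifier elimination. You instead establish (i) and (ii) simultaneously by verifying that $a\downarrow_A b :\Leftrightarrow \langle aA\rangle\cap\langle bA\rangle=\langle A\rangle$ satisfies the Kim--Pillay axioms, with Proposition 2.6 supplying the amalgamation needed for the Independence Theorem in sort $O$. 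This is precisely the strategy the paper itself uses for the second example (Proposition 3.15, which explicitly cites Theorem 4.2 of Kim--Pillay), so you are in effect importing the Section 3 method into Section 2. The Kim--Pillay route buys you a self-contained argument that does not rely on the unstated ``local character for $1$-types implies simplicity'' reduction; the cost is having to verify several axioms, of which the Independence Theorem is the only genuinely substantive one. Two small cautions: your sketch of the Independence Theorem treats the two sorts separately, but a tuple $d$ can mix sorts and carry $R$-data linking its $O$-part to the $P$-parts of $\langle Mb\rangle$ and $\langle Mc\rangle$, so the amalgamated good colouring must be defined on all of $P(\langle Mbcd\rangle)$ at once (still fine, since $\langle Mb\rangle\cap\langle Mc\rangle=M$ splits the relevant $F_5$-orbits cleanly); and extension should be checked in the precise Kim--Pillay form (full existence over arbitrary supersets), which as you note Axiom Schemas I and II do deliver via $\omega$-saturation. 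With these points tightened, the argument is sound.
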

\begin{proof}  By Proposition \ref{prop: div in Sec 2}, every complete $1$-type (over any set) is either algebraic or does not divide over $\emptyset$, which implies that $T$ is simple. 
In particular forking equals dividing and is symmetric.  And so the proposition says that the only forking extensions of any complete $1$-type are algebraic, namely that each of the sorts has $SU$-rank $1$.

(ii) follows from  Proposition \ref{prop: div in Sec 2} (and Proposition \ref{prop: Tstar in Sec 2} (iii)) by  forking calculus, using also the fact for any set $B$, $ \langle B \rangle = \bigcup_{b\in B} \langle b \rangle$, which follows from there being only unary function symbols in the language. 

And (iii) is a consequence of quantifier elimination. 
\end{proof}

The proof of Theorem \ref{thm: Main Thm 1} is completed by the following results:
\begin{Proposition} For any $a\in P$, the formula $R(z,a)$ does not fork over $\emptyset$ but has measure $0$ for any (automorphism) invariant Keisler measure $\mu$ (on the sort $O$). 
\end{Proposition}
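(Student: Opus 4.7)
The plan has two separate parts.

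For non-forking, I would use the $SU$-rank~1 statement from Corollary~2.11 (via Proposition~\ref{prop: div in Sec 2}): any complete non-algebraic $1$-type over any small set does not divide, hence does not fork, over $\emptyset$. So it suffices to exhibit some $c\in O(\bar M)$ with $R(c,a)$ holding and $c\notin\mathrm{acl}(a)$. This is immediate from Axiom Schema II applied with $n=1$ and $x_1=a$, no $y_i$'s, and arbitrary $z_1,\dots,z_m\in O$: the schema produces, for each finite collection of distinguished points of $O$, an element $z$ outside it satisfying $R(z,a)$. Hence $R(z,a)$ has infinitely many realizations in $\bar M$, and any complete non-algebraic extension is non-forking over $\emptyset$.

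For the measure-zero part, fix an automorphism invariant global Keisler measure $\mu$ on the sort $O$, and set $t:=\mu(R(z,a))$. The key inputs are (a) Axiom (ii) of the universal theory $T$, which says that for \emph{every} $a\in P$, the sets $R_{f_1(a)}, R_{f_2(a)}, R_{f_3(a)}$ are pairwise disjoint and each contained in $R_{g_1(a)}\cup R_{g_2(a)}$, and (b) uniqueness of the $1$-type of $P$ over $\emptyset$ (Corollary~2.11(iii)) together with saturation of $\bar M$, which implies that any two elements of $P(\bar M)$ are conjugate by an automorphism of $\bar M$. Consequently, by $\mathrm{Aut}(\bar M)$-invariance of $\mu$, all of
\[
\mu(R(z,a)),\ \mu(R(z,f_1(a))),\ \mu(R(z,f_2(a))),\ \mu(R(z,f_3(a))),\ \mu(R(z,g_1(a))),\ \mu(R(z,g_2(a)))
\]
are equal to $t$. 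Applying finite additivity together with the inclusions and disjointness from (a) gives
\[
3t \;=\; \mu(R_{f_1(a)}) + \mu(R_{f_2(a)}) + \mu(R_{f_3(a)}) \;\leq\; \mu(R_{g_1(a)}\cup R_{g_2(a)}) \;\leq\; 2t,
\]
forcing $t=0$.

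There is no real obstacle here; the only point to be careful about is invoking uniqueness of the $1$-type of $P$ over $\emptyset$ correctly, since this is what lets us move $a$ to $f_i(a)$ and $g_j(a)$ by automorphisms of $\bar M$ rather than only to other elements of $P$ realizing the same type. Saturation handles this immediately.
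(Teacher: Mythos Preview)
Your proof is correct and follows essentially the same approach as the paper's: both parts use the unique $1$-type on $P$ plus the disjointness/containment axioms to force $3t\leq 2t$, and $SU$-rank $1$ plus infinitude of $R(z,a)$ for non-forking. One small imprecision: Axiom Schema~II as stated requires the same number of $x_i$'s, $y_i$'s, and $z_i$'s, so to avoid $m$ given points of $O$ you should invoke it with $n=m$, taking $x_1=\cdots=x_m=a$ and any $y_1,\ldots,y_m\neq a$ (for which $\gamma_m$ holds by Corollary~2.7), rather than ``$n=1$ with no $y_i$'s.''
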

\begin{proof} Let $\mu$ be an invariant Keisler measure on the sort $O$.  As there is a unique $1$-type over $\emptyset$ realized in $P$, $\mu(R(x,a)) = \mu(R(x,b))$ for all $a,b\in P$. 
But for any given $a$, and $i\in [3]$, $R(x,f_{i}(a)) \to \left( R(x,g_{1}(a)) \vee R(x,g_{2}(a)) \right)$, and $R(x,f_{1}(a))$, $R(x,f_{2}(a))$, $R(x,f_{3}(a))$ are pairwise inconsistent.  So this forces 
$\mu(R(x,a)) = 0$ for all $a\in P$.  On the other hand $R(x,a)$ has infinitely many realizations, so as $O$ has $SU$-rank $1$, $R(x,a)$ does not fork over $\emptyset$. 
\end{proof} 

\begin{Proposition} The theory $T^{*}$ is \emph{extremely amenable}: every complete type over $\emptyset$ has a global (automorphism) invariant extension.
\end{Proposition}
\begin{proof} We just give a sketch, leaving details to the interested reader.
Let $p({\bar x}, {\bar z}) = tp({\bar a}, {\bar b}/\emptyset)$ where ${\bar a}$ is a tuple from $P$ and ${\bar b}$ a tuple from $O$.
Let $M$ be a saturated model. Then we can find a realization $({\bar a}', {\bar b}')$ of $p$ in some elementary extension $N$ of $M$ such that all the elements from the tuple $({\bar a}', 
{\bar b}')$ are in $N\setminus M$, $N\models \neg R(d, a)$ for each $a\in {\bar a}'$, $d\in O(M)$,  and $N\models \neg R(b,d)$ for each $b\in {\bar b}'$ and $d\in P(M)$. 
Then $tp(({\bar a}', {\bar b}')/M)$ is clearly $Aut(M)$-invariant  (using quantifier elimination). 
\end{proof}

\section{A non definably amenable group definable in a simple theory}\label{sec 3}
In this section we will prove Theorem \ref{thm Main theorem 2}.  Again we start with  an overview.  Our theory $T^{*}$ will be a certain  expansion of $ACF_{0}$, and the group $G$ which is not definably amenable will be $SL_{2}(K)$, where $K$ is the underlying algebraically closed field.  Of course working just in $ACF_{0}$, $SL_{2}(K)$ will be definably (extremely) amenable. The additional structure we will add will be a partition of $SL_{2}(K)$ into $4$ sets $C_{1}, C_{2}, C_{3}, C_{4}$.  We will choose matrices  $a(i,j)\in SL_{2}(\Z)$ for $i\in [4]$, $j\in [3]$, which freely generate  $F_{12}$, and require that for each $i\in [4]$, $\bigcup_{j\in [3]} a(i,j)^{-1}C_{i}  = SL_{2}(K)$.  The $C_{i}$ will be chosen sufficiently generically so that the theory $T^*$ of the structure $(K,+,\times, C_{1}, C_{2},C_{3}, C_{4})$ is simple of $SU$-rank $1$.
If by way of contradiction $G = SL_{2}(K)$ were definably amenable, witnessed by (left) invariant Keisler measure $\mu$, then the requirement above implies that $\mu(C_{i}) \geq 1/3$ for each $i\in [4]$ but then by disjointness, $\mu(G) \geq 4/3$ a contradiction.

In Section \ref{sec 4}, we will mention a closely related example with $F_{6}$ in place of $F_{12}$ but  with a partition of $SL_{2}(K)$ into six sets rather than four.  In terms of certain invariants related to ``definable paradoxical decompositions", this other example could be considered ``better". The general theory of paradoxical decompositions in both the abstract or discrete groups setting and the definable setting will also be discussed.

As in Section \ref{sec 2}, we will describe a universal theory $T$, and $T^{*}$ will be its model companion, but no longer complete. 

\subsection{The universal theory}\label{subs 3.1}
The language $L$ will be that of unital rings, together with four $4$-ary predicate symbols  $C_{1}, C_{2}, C_{3}, C_{4}$.  

It is well-known that

\[\begin{matrix} a = \left( \begin{matrix}
1 & 2 \\ 0 & 1 
\end{matrix} \right), &
b = \left( \begin{matrix}
1 & 0 \\ 2 & 1 
\end{matrix} \right)
\end{matrix}
\]
generate a free group in $\mathrm{SL}_{2}(\mathbb{Z})$.  Hence so do the matrices
\[ 
a^{-k} b a^{k}  =  \left(\begin{matrix} 1-4k & -8k^2 \\ 2 & 4k+1 \end{matrix} \right),
\]
for $k = 0, \ldots, 11$.  We number these 12 matrices in some way as $a(i,j)$, for $i \in [4]$, $j \in [3]$.  We will refer to the group generated by these matrices as $F_{12}$. Note that the entries of each $a(i,j)$ are terms of the language. 

For an integral domain $R$ of characteristic $0$, $SL_{2}(R)$ is the collection of $2\times 2$ matrices over $R$ of determinant $1$. 
The (universal)  theory $T$  in the language $L$ will be the theory of integral domains $R$ of characteristic $0$ together with axioms:
\begin{enumerate}[(i)]
\item The $4$-ary predicates $C_{1},\ldots,C_{4}$ partition $SL_{2}(R)$, and
\item For each $x\in SL_{2}(R)$ and each $i\in [4]$, there is $j\in [3]$ such that $a(i,j)\cdot x \in C_{i}$. 
\end{enumerate}

\subsection{Combinatorics and colourings}\label{subs 3.2}
We prove some lemmas needed for defining $T^{*}$.
The context in this section is simply the free group $G = F_{12}$ on $12$ generators  numbered as $a(i,j)$ for $i\in [4]$ and $j\in [3]$ together with a free action of $G$ on a set $X$.

\begin{Definition} Let $X_{0}\subseteq X$. A colouring $c: X_{0} \to [4]$ is {\em good} if for all $x\in X_{0}$ and $i\in [4]$, IF $a(i,j)\cdot x \in X_{0}$ for all $j\in [3]$, THEN
$c(a(i,j)\cdot x) = i$ for some $j\in [3]$. 
We call this condition the $ith$ colouring axiom at $x$.
Also we may call the (good) colouring total if $X_{0} = X$. 
\end{Definition}

As at the beginning of Section \ref{subs 2.2} we have a graph structure on $X$, relative to our choice above of free generators of $F_{12}$, the notion of a path between two points of $X$, and the distance function $d$. Recall that there will be at most one path between distinct points $x,y\in X$.  For $X_{0}$ a subset of $X$, $B_{n}(X_{0})$ is the ball of radius $n$ around $X_{0}$, namely  the set of $x\in X$ such that there is $y\in X_{0}$ with $d(x,y)\leq n$. 
We will {\em not} have an explicit analogue of the tree structure from Definition \ref{def 2.1}. 

We now mention some additional conventions and facts that we will make use of for the new example.
First  a subset $X_{0}$ is said to be {\em connected} if for any $x,y\in X$, $d(x,y)< \infty$ and all points on the  path from $x$ to $y$ are in $X_{0}$.  Note that this notion depends only on the graph structure on $X$.  

Given a subset $X_{0}$ of $X$, a maximal connected subset of $X_{0}$ will be called a {\em connected component} of $X_{0}$, and $X_{0}$ will be a disjoint union of its connected components.  Note that a connected component of $X$ itself is the same thing as an $F_{12}$-orbit.

Finally, given disjoint connected subsets $C_{0}$, $C_{1}$ of $X$, by a {\em path between $C_{0}$ and $C_{1}$} we mean a path  $x_{0}, x_{1},...,x_{n}$ between some $x_{0}\in C_{0}$ and some $x_{n}\in C_{1}$ such that no $x_{i}$ for $i=1,..,n-1$ is in $C_{0}\cup C_{1}$.  It is easy to check that if there is such a path, then it has to be unique, in which case we let $d(C_{0}, C_{1})$ be the length of such a path. 

We will use freely these notations and facts in the rest of Section \ref{subs 3.2}.



\vspace{2mm}
\noindent
We now give some lemmas about extending good colourings.

\begin{Lemma}\label{lem extending color 1 in sec 3} Suppose that $X_{0}\subseteq X$ is connected. Then any good colouring $c_{0} : X_{0} \to [4]$ extends to a total good colouring. 
\end{Lemma}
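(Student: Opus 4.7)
The plan is to extend $c_0$ one vertex at a time, adding points adjacent to the current connected set and showing that at each step the colour of the new point is constrained to at most one value. Since the $a(i,j)$ freely generate $F_{12}$ and the action on $X$ is free, each $F_{12}$-orbit of $X$ is isomorphic, as a graph in the generators $\{a(i,j)^{\pm 1}\}$, to the $24$-regular tree; in particular $X$ is a disjoint union of trees and every connected subset of $X$ is a subtree of a single orbit.

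Let $\mathcal{O}$ be the orbit containing $X_0$. By a breadth-first traversal from $X_0$ inside the tree $\mathcal{O}$, enumerate $\mathcal{O}\setminus X_0$ as $y_1, y_2, \ldots$ so that each $y_k$ is adjacent to $Y_{k-1} := X_0\cup\{y_1,\ldots,y_{k-1}\}$. For each $F_{12}$-orbit disjoint from $X_0$, pick a base point (trivially good on a singleton), colour it arbitrarily, and enumerate the rest of that orbit in the same fashion. It then suffices to show that at each extension step one can choose a colour for the new point so that goodness is preserved.

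The key observation is that $Y_{k-1}$ is a connected subtree of $\mathcal{O}$, so $y_k$ has exactly one neighbour $x\in Y_{k-1}$---any second neighbour would produce a cycle. Two kinds of colouring axioms could newly be affected by adding $y_k$. First, the $i$-th axiom at a previously coloured point $x'$ concerns the triple $\{a(i,j)\cdot x':j\in[3]\}$; it becomes newly complete only when $y_k$ is one of these three forward $i$-neighbours of $x'$, which forces $x'=x$. Hence there is at most one pair $(i_0,j_0)$ with $y_k=a(i_0,j_0)\cdot x$, and so at most one such axiom is affected. Second, an axiom at $y_k$ itself would require all three points $a(i,j)\cdot y_k$ to lie in $Y_{k-1}$, but $y_k$ has only one neighbour in $Y_{k-1}$, so no axiom at $y_k$ is yet complete. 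The colour of $y_k$ is therefore forced to at most one value: if $y_k=a(i_0,j_0)\cdot x$ and the other two forward $i_0$-neighbours of $x$ lie in $Y_{k-1}$ with neither coloured $i_0$, then we must set $c(y_k)=i_0$; otherwise any colour in $[4]$ will do. Iterating this construction through all the $y_k$ and then across all orbits yields the desired total good colouring extending $c_0$. The only real point to verify is the uniqueness of the forced colour at each step, which rests essentially on the tree structure of the free $F_{12}$-action.
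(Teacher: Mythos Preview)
Your proof is correct and follows essentially the same approach as the paper: both extend the colouring outward from $X_0$ using that each orbit is a tree, so that every new point has a unique neighbour in the already-coloured set, and hence at most one colouring axiom can become newly activated. The only cosmetic difference is that the paper extends by whole balls $B_n(X_0)$ at once with the uniform rule ``colour $y$ with $i$ whenever $y=a(i,j)\cdot x$ for some $x\in B_n(X_0)$'', whereas you extend one vertex at a time and only force the colour when an axiom actually demands it.
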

\begin{proof} We may assume that $X_{0} \neq \emptyset$, otherwise replace it by a singleton coloured with any colour.  As good colourings can be defined independently on connected components of $X$, we may  assume that $X$ is connected, so equals $\bigcup_{n}B_{n}(X_{0})$. And note that each $B_{n}(X_{0})$ is connected. We extend $c_{0}$ to $X$ by induction. Assume that we already have a good colouring $c_{n}: B_{n}(X_{0}) \to [4]$ extending $c_{0}$.  We extend to $c_{n+1}$. Suppose first that $y = a(i,j)\cdot x \in B_{n+1}(X_{0})\setminus B_{n}(X_{0})$ for some $x\in B_{n}(X_{0})$, and some $i,j$, then define $c_{n+1}(y) = i$. Note that this is well-defined, as $\{x,y\}$ is the unique path between the connected sets $B_{n}(X_{0})$ and $\{y\}$. 
If $y\in B_{n+1}(X_{0})\setminus B_{n}(X_{0})$ is not of the form, $a(i,j)x$ for $x\in B_{n}(X_{0})$,  define $c_{n+1}(y) \in [4]$ arbitrarily.   

We have to check that $c_{n+1}$ is a good colouring of $B_{n+1}(X_{0})$. Suppose $x\in B_{n+1}(X_{0})$ and  $i\in [4]$, and $a(i,j)\cdot x\in B_{n+1}(X_{0})$ for all $j\in [3]$.  Now if $a(i,j)\cdot x \in B_{n}(X_{0})$ for all $j\in [3]$ then by connectedness of $B_{n}(X_{0})$ also $x\in B_{n}(X_{0})$ and so as $c_{n}$ is a good colouring and $c_{n+1}$ extends $c_{n}$, the $ith$ colouring axiom  at $x$ is satisfied.   Otherwise  $a(i,j)\cdot x \in B_{n+1}(X_{0})\setminus B_{n}(X_{0})$ for some $j\in [3]$. Let $y = a(i,j)x$. Then $x\in B_{n}(X_{0})$, for if not, both $x$ and $y$ have distance $n+1$ from $X_{0}$, contradicting the existence of a unique path between the connected sets $X_{0}$ and $\{x,y\}$. Hence $c_{n+1}(y) = i$ by definition, and we have shown that the $c_{n+1}$ satisfies the $ith$ colouring axiom  at $x$.  As $x\in B_{n+1}(X_{0})$ and $i\in [4]$ were arbitrary we see that $c_{n+1}$ is a good colouring of $B_{n+1}(X_{0})$.
\end{proof}

\begin{Lemma}\label{lem extending color 2 in sec 3} Let $C_{0}$, $C_{1}$ be disjoint connected subsets of $X$ with $3 \leq d(C_{0}, C_{1})$ $< \infty$. Let $C$ be the smallest connected subset of $X$ containing $C_{0}\cup C_{1}$. Then any good colouring $c_{0}$ of $C_{0}\cup C_{1}$ extends to a good colouring of $C$.
\end{Lemma}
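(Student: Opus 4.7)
The plan is to identify $C$ explicitly as $(C_0 \cup C_1) \cup \{y_1, \ldots, y_{n-1}\}$, where $y_0 \in C_0$, $y_n \in C_1$, $n = d(C_0, C_1) \geq 3$, and $y_0, y_1, \ldots, y_n$ is the unique geodesic in $X$ between the two closest points of $C_0$ and $C_1$; then to show the bridge vertices $y_1, \ldots, y_{n-1}$ can be coloured so as to extend $c_0$ to a good colouring of $C$. Since $F_{12}$ is freely generated by the chosen twelve generators, its Cayley graph is a tree, hence each orbit of $X$ is a tree; the finiteness $d(C_0, C_1) < \infty$ puts $C_0$ and $C_1$ in a common orbit, and a direct check using connectedness of each $C_i$ confirms that $C$ has the stated form.

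The key combinatorial observation is that each internal bridge vertex $y_k$ ($1 \leq k \leq n-1$) has exactly two neighbors in $C$, namely $y_{k-1}$ and $y_{k+1}$. A putative further neighbor $z$ of $y_k$ in $C$ must lie in $C_0 \cup C_1$, since the only bridge vertices adjacent to $y_k$ are $y_{k-1}$ and $y_{k+1}$. But $z \in C_0$ would make the unique tree geodesic from $y_0$ to $z$ be $y_0, y_1, \ldots, y_k, z$, which by connectedness of $C_0$ would force $y_1 \in C_0$, contradicting $y_1 \notin C_0 \cup C_1$; likewise $z \notin C_1$. Since every colouring axiom at a vertex requires three distinct forward-neighbors $a(i,j) \cdot y_k$, $j \in [3]$, to lie in $C$, and only two neighbors of $y_k$ are available in $C$, no colouring axiom is ever triggered at any internal bridge vertex. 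Hence the colour chosen for any $y_k$ imposes no constraint arising from $y_k$ itself.

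It remains to analyze the boundary vertices $y_0$ and $y_n$. Every axiom already triggered at some vertex of $C_0 \cup C_1$ stays triggered and satisfied in $C$, because colours there are unchanged and, by the neighbor analysis above, the only vertices of $C_0 \cup C_1$ that gain new neighbors in $C$ are $y_0$ (gaining $y_1$) and $y_n$ (gaining $y_{n-1}$). At $y_0$, write $y_1 = g \cdot y_0$ for the unique generator $g$. If $g = a(i_0, j_0)^{-1}$, then $y_1$ is not of the form $a(i,j) \cdot y_0$ for any $(i,j)$, so no new axiom at $y_0$ is triggered. If $g = a(i_0, j_0)$, the only potentially newly triggered axiom at $y_0$ is the $i_0$-th; set $c(y_1) := i_0$ to satisfy it (or any colour, if some $a(i_0, j') \cdot y_0 \in C_0 \cup C_1$ already has colour $i_0$). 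This is the only constraint on $c(y_1)$, since $y_1$ figures in no axiom at any other vertex. The symmetric analysis determines $c(y_{n-1})$, and the hypothesis $d(C_0, C_1) \geq 3$ is exactly what ensures $y_1 \neq y_{n-1}$, so the two boundary choices are independent; the remaining $y_2, \ldots, y_{n-2}$ are coloured arbitrarily. The main (mild) obstacle is the bookkeeping needed to rule out hidden newly-triggered axioms: the neighbor count handles all interior bridge vertices uniformly, while the generator-uniqueness argument handles the boundary.
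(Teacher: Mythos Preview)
Your proof is correct and follows essentially the same approach as the paper's: both exploit that internal bridge vertices have only two neighbours in $C$, so no colouring axiom is triggered there, and both colour the first and last bridge vertices according to the generator leading into them from $y_0$ (respectively $y_n$) so as to satisfy any newly triggered axiom at the endpoints. The only difference is that the paper first invokes Lemma~3.2 to enlarge $C_0$ until exactly two bridge vertices remain, whereas you handle all $n-1$ bridge vertices directly; your version is slightly more self-contained.
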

\begin{proof} Note that $C$ is the union of $C_{0}$, $C_{1}$ and the points on the unique path $I$ connecting them. By assumption the length of $I$ is $\geq 3$, namely  $|I|\geq 4$.  Now extending, if necessary,  $C_{0}$ to a suitable $B_{n}(C_{0})$ and extending $c_{0}|C_{0}$ to a good colouring of $B_{n}(C_{0})$ we may assume that $I = (u,v,y,z)$ with $u\in C_{0}$, $z\in C_{1}$ and $v,y\notin C_{0}\cup C_{1}$. 

If $v = a(i,j)\cdot u$ for some $i, j$ put $c(v) = i$.  Otherwise define it arbitrarily. Likewise if $y = a(i,j)\cdot z$ for some $i,j$ define $c(y) = i$. Note that this is well-defined.
We  have to check that $c$ is a good colouring.  And for this it is clear that we only need to check the $ith$ colouring axioms at $u,v,y, z$ (for all $i$). For $u, z$ it is clear by construction.  And for $v,y$ it is also clear vacuously, because it cannot be the case that all of $a(i,1)\cdot v$, $a(i,2)\cdot v$ and $a(i,3)\cdot v$ lie in $C$, and similarly for $y$. 
\end{proof}

\begin{Lemma}\label{lem extending color 3 in sec 3} Suppose $X_{0}\subseteq X$ has $n$ connected components, any two of which are of distance $\geq 2^{n}$ apart. Then any good colouring $c_{0}$ of $X_{0}$ extends to a good colouring of $X$.
\end{Lemma}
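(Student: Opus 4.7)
The plan is to induct on $n$, iteratively merging pairs of connected components via Lemma 3.3 until only a single connected region remains, and then applying Lemma 3.2. The base case $n = 1$ is immediate from Lemma 3.2. For the inductive step, I first reduce to the case where all components $C_1,\ldots,C_n$ lie in a single $F_{12}$-orbit $T$ of $X$: since distinct orbits are at distance $\infty$ from one another, components in distinct orbits can be processed independently by the inductive hypothesis, and $F_{12}$-orbits disjoint from $X_0$ can be given a good coloring via Lemma 3.2 applied to a singleton in each. Because $F_{12}$ is free and acts freely on $X$, each orbit $T$ is, as a graph, the Cayley tree of $F_{12}$.

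Working inside the tree $T$, I choose a closest pair $C_1, C_2$ and write $d_{ij} := d(C_i, C_j)$. Then $d_{12} \geq 2^n \geq 4$, so Lemma 3.3 extends the restriction of $c_0$ to $C_1 \cup C_2$ to a good coloring of the smallest connected set $C_{12}$ containing $C_1 \cup C_2$ (namely $C_1 \cup C_2$ together with the unique geodesic joining them). Combining this with $c_0$ on $C_3, \ldots, C_n$ yields a good coloring of $X_0' := C_{12} \cup C_3 \cup \cdots \cup C_n$. It then suffices to verify that $X_0'$ has $n-1$ connected components at pairwise distance $\geq 2^{n-1}$, and apply the inductive hypothesis.

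The main step, which I expect to be the chief obstacle, is the distance estimate $d(C_{12}, C_k) \geq 2^{n-1}$ for each $k \geq 3$ (the distances among $C_3, \ldots, C_n$ are unchanged and already $\geq 2^n$; the estimate also ensures disjointness, so the component count drops by exactly one). Along the unique geodesic in $T$ from $C_1$ to $C_2$, the function $p \mapsto d(p, C_k)$ is convex and piecewise linear with slopes $\pm 1$, attaining its minimum value $(d_{1k} + d_{2k} - d_{12})/2$ at the median point of $C_1, C_2, C_k$. Since $C_1, C_2$ is a closest pair, $d_{1k}, d_{2k} \geq d_{12}$, and therefore
\[
d(C_{12}, C_k) \;=\; \min\bigl(d_{1k},\, d_{2k},\, (d_{1k}+d_{2k}-d_{12})/2\bigr) \;\geq\; d_{12}/2 \;\geq\; 2^{n-1}.
\]
The exponential hypothesis $2^n$ is exactly calibrated for this scheme: each of the $n-1$ merging steps can halve the minimal pairwise distance, and Lemma 3.3 requires distance $\geq 3$ at every merge, so the initial bound must dominate this geometric decay. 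Selecting the closest pair at each step is what keeps the halving from being worse.
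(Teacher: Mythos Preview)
Your proposal is correct and follows essentially the same approach as the paper's proof: induct on the number of components, reduce to a single orbit, merge a closest pair via Lemma~3.3, and verify the halved distance bound for the remaining components before invoking the inductive hypothesis. Your median/convexity phrasing of the distance estimate is equivalent to the paper's triangle-inequality contradiction argument; just take a little care that the formula $(d_{1k}+d_{2k}-d_{12})/2$ is stated for points rather than connected sets, though the needed inequality $d(C_{12},C_k)\geq d_{12}/2$ follows either way.
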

\begin{proof}  By induction on $n$. The case $n=1$ is Lemma \ref{lem extending color 1 in sec 3}.  The case $n=2$ is Lemma \ref{lem extending color 2 in sec 3}, noting that  $2^{2} = 4 \geq 3$. 

So let us assume $n\geq 2$ and the lemma holds for $n$ and we want to prove it for $n+1$. 
Let $X_{0}$ have  $n+1$ connected components  $C_{0}, \ldots, C_{n}$ and let $c_{0}$ be a good colouring of $X_{0}$. 
As the connected components of $X$ can be coloured separately, we may assume that the $C_{i}$ lie on a common connected component of $X$. We may also assume that the distance $l$ between $C_{0}$ and $C_{1}$ is the minimal distance between distinct pairs $C_{i}$, $C_{j}$. 
Let $C_{1}'$ be the smallest connected subset of $X$ containing $C_{0}$ and $C_{1}$  (as mentioned earlier $C_{1}'$ is the union of $C_{0}$, $C_{1}$ and the points on the unique shortest path between $C_{0}$ and $C_{1}$).  Using Lemma \ref{lem extending color 2 in sec 3}, let $c_{1}'$ be a good colouring of $C_{1}'$ extending  $c_{0}|(C_{0}\cup C_{1})$. 

\begin{Claim*} 
For each $i >1$, the distance between $C_{1}'$ and $C_{i}$ is at least $2^{n}$. 
\end{Claim*}

\noindent{\em Proof of Claim.}  Fix $i >1$ and let $d = d(C_{1}', C_{i})$ and suppose for a contradiction that $d< 2^{n}$. As $d(C_{0},C_{i})$ and $d(C_{1},C_{i})$ are both $\geq 2^{n+1}$, then $d$ has to be witnessed by $d(x,C_{i})$, where $x$ is a point on the unique shortest path $I$ between $C_{0}$ and $C_{1}$ which we know has length $l$.  So $d(x,C_{i})< 2^{n}$,  $d(x,C_{0}) = l_{0}$ say, and $d(x,C_{1}) = l_{1}$  say with $l_{0} + l_{1} = l$.   Moreover $d(C_{0}, C_{i}) \leq l_{0} + d$ and $d(C_{1}, C_{i}) \leq  l_{1} + d$, both of which are $\geq l$ by choice of $C_{0}$ and $C_{1}$. 
But then  $l + 2d = l_{0} + d + l_{1} + d  \geq 2l$ which implies $2d \geq l \geq 2^{n+1}$, which implies $d\geq 2^{n}$, a contradiction. 

Let $X_{0}' = C_{1}' \cup C_{2}\cup \ldots \cup C_{n}$, and let $c_{0}'$ be $c_{0}$ on $C_{2}\cup \ldots\cup C_{n}$ and $c_{1}'$ on $C_{1}'$. Note that $c_{0}'$ is a good colouring on $X_{0}'$ as it is good on each connected component of $X_{0}'$. Then by the claim, and  the induction hypothesis, $c_{0}'$ extends to a good colouring $c$  of $X$, and as $c_{0}'$ extends $c_{0}$, $c$ extends $c_{0}$ too. 
\end{proof}

\begin{Lemma}\label{lem extending color 4 in sec 3} Suppose $X_{0}\subseteq X$  has size $n$. Let $\alpha(n) = 2^{n+1} - 1$, and let $c_{0}: X_{0}\to [4]$ be a good colouring which extends to a good colouring $c': B_{\alpha(n)}(X_{0}) \to [4]$. Then $c_{0}$ extends to a good colouring $c: X \to [4]$ of $X$. 
\end{Lemma}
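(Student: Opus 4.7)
The plan is to reduce to Lemma 3.4 by constructing an intermediate set $Y$ with $X_0 \subseteq Y \subseteq B_{\alpha(n)}(X_0)$ such that $c'|_Y$ is a good colouring of $Y$ and $Y$ consists of $m$ connected components whose pairwise distances are all at least $2^m$. Once this is done, Lemma 3.4 applied to $Y$ and $c'|_Y$ (with $m$ in place of its ``$n$'') produces a total good colouring of $X$ extending $c'|_Y$, and in particular extending $c_0$.

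I construct $Y$ by iterated merging. Set $Y_0 := X_0$, which has some number $k_0 \leq n$ of connected components. At stage $k$, let $m$ be the number of connected components of $Y_k$: if every pair of components of $Y_k$ is at distance $\geq 2^m$, stop and set $Y := Y_k$; otherwise pick two components $C, C'$ of $Y_k$ realizing the minimum pairwise distance $d$ (necessarily $d < 2^m$), let $P$ be the unique shortest $X$-path from $C$ to $C'$, and put $Y_{k+1} := Y_k \cup P$. If some interior vertex of $P$ lay in a third component $C''$ of $Y_k$, then a sub-path of $P$ would realise $d(C, C'') < d$ or $d(C'', C') < d$, contradicting the choice of $C, C'$; so each merge reduces the component count by exactly one. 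The process therefore halts in at most $k_0 - 1$ steps.

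To show $Y \subseteq B_{\alpha(n)}(X_0)$, track $R_k := \max_{y \in Y_k} d(y, X_0)$. A new vertex added during the $(k+1)$st merge lies on a path $P$ of length $d < 2^m$ with both endpoints in $Y_k$, and hence is at distance at most $\lfloor d/2 \rfloor < 2^{m-1}$ from one of these endpoints, which in turn is at distance $\leq R_k$ from $X_0$. Thus $R_{k+1} < R_k + 2^{m-1}$, where $m$ is the component count of $Y_k$ at that stage. Since the values of $m$ at successive merges form a strictly decreasing sequence in $\{k_0, k_0 - 1, \ldots, 2\}$ and $k_0 \leq n$, summing yields $R_K < \sum_{m=2}^{n} 2^{m-1} = 2^n - 2 < 2^{n+1} - 1 = \alpha(n)$. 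This confirms $Y \subseteq B_{\alpha(n)}(X_0)$; and from this, $c'|_Y$ is good on $Y$, since for any $x \in Y$ and $i \in [4]$ with $a(i,j) \cdot x \in Y$ for all $j \in [3]$, all the points involved lie in $B_{\alpha(n)}(X_0)$, so goodness of $c'$ there supplies a $j$ with $c'(a(i,j) \cdot x) = i$.

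Applying Lemma 3.4 to $Y$ and $c'|_Y$ now yields a total good colouring of $X$ extending $c_0$, as required. The main technical obstacle is the radius bookkeeping in the termination argument; the geometric-series estimate is comfortable against the generous bound $\alpha(n) = 2^{n+1} - 1$.
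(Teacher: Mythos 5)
Your proof is correct and arrives at the lemma by a genuinely different mechanism from the paper's. The paper's proof iteratively inflates by balls: it sets $X_{i+1} = B_{2^{k_i}}(X_i)$, where $k_i$ is the number of components of $X_i$, observing that this inflation strictly reduces the component count, and bounding the total inflation radius by $\sum_i 2^{k_i} \leq \sum_{i=0}^n 2^{n-i} = \alpha(n)$. You instead merge components pairwise by adding only the shortest path between the two closest components, reducing the component count by exactly one per step. Your key supplementary observation --- that an interior vertex of this shortest path cannot land in a third component without violating minimality of $d(C,C')$ --- is what guarantees the count drops by exactly one, and it is clean. The radius bookkeeping is also more economical: each merge adds at most $\lfloor d/2 \rfloor < 2^{m-1}$ to the radius (since a path vertex is close to whichever endpoint is nearer), giving the tighter bound $R_K < \sum_{m=2}^{n} 2^{m-1} = 2^n - 2$, comfortably inside $\alpha(n) = 2^{n+1}-1$. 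Both arguments then invoke Lemma 3.4 on the resulting intermediate set, and both correctly note (you explicitly, the paper implicitly) that the restriction of $c'$ to that set is good because it is contained in $B_{\alpha(n)}(X_0)$, where $c'$ is already good. One small stylistic gap worth filling to match the paper's level of care: as in the proof of Lemma 3.4, you may tacitly assume all the $C_i$ lie in a single $X$-orbit (otherwise treat each orbit separately), so that the minimal inter-component distance you take in each merge step is finite and the shortest path exists; components lying in distinct orbits are at distance $\infty$ and never need merging, which is consistent with the hypothesis of Lemma 3.4.
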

\begin{proof} Let $k_{0}$ be the number of connected components of $X_{0}$. So $k_{0} \leq n$.

\noindent{\em Case 1.}  Either $k_{0} = 1$ ($X_{0}$ is connected) or $k_{0} > 1$ and the $k_{0}$ connected components of $X_{0}$ are at distance $\geq 2^{k_{0}}$ apart.

\noindent 
Then by Lemma \ref{lem extending color 3 in sec 3}, $c_{0}$ extends to a good colouring of $X$. And we are finished.

\noindent {\em Case 2.}  Otherwise. Then define  $X_{1} = B_{2^{k_{0}}}(X_{0})$, and $k_{1}$ to be the number of connected components of $X_{1}$.  And note that $k_{1} < k_{0}$ and $X_{1}\subseteq B_{\alpha (n)}(X_{0})$.

\noindent
Again if either $X_{1}$ is connected or the $k_{1}$ connected components of $X_{1}$ are of distance $\geq 2^{k_{1}}$ apart,  then the good colouring $c'|X_{1}$ extends to a good colouring of $X$, and we finish. 

Otherwise define $X_{2} = B_{2^{k_{1}}}(X_{1})$ and $k_{2}$ to be the number of connected components of $X_{2}$. So $k_{2} < k_{1}$.

We continue this way to produce  $k_{0} > k_{1} > \ldots > k_{l} \geq 1$ and $X_{0} \subseteq X_{1} \subseteq \ldots \subseteq X_{l}$ where $X_{i}$ has $k_{i}$ connected components, until 
we get that $X_{l}$ is connected or its $k_{l}$ connected components are at distance $\geq 2^{k_{l}}$ apart, and we extend $c'|X_{l}$ to a good colouring of $X$.

We have to check why the process can be continued, in particular why  each $X_{i}\subseteq  B_{\alpha(n)}(X_{0})$.  It is because, $k_{i} \leq n- i$ for each $i$, and so 
$\sum_{i=0,\ldots,l}2^{k_{i}} \leq \sum_{i=0,\ldots,n} 2^{n-i} = \sum_{i=0,\ldots,n} 2^{i} = 2^{n+1} - 1 = \alpha(n)$. Whereby  $X_{i}\subseteq B_{\alpha(n)}(X_{0})$ for all $i=1,\ldots,l$. 
\end{proof}

\subsection{The theory $T^{*}$}\label{subs 3.3}
Here we will obtain the model companion $T^{*}$ of the universal theory $T$ introduced in Section \ref{subs 3.1}.
In terms of compatibility with notation in the previous section, we will write a model of $T$ as $M = (R,c)$, where $R$ is an integral domain of characteristic $0$ and $c$ is the  colouring $SL_{2}(R)\to [4]$ such that $C_{i}(M) = c^{-1}(i)$ for $i=1,\ldots,4$. So as $F_{12}$ is acting freely on $SL_{2}(R)$ by left multiplication, the axioms from Section \ref{subs 3.1} say precisely that $c$ is a good colouring.  In this context we will use freely the colouring notation from the previous section. 
We begin with some observations which will be useful for the rest of Section \ref{sec 3}.
\begin{Lemma}\phantomsection\label{lem extending color to integral domain} 
\begin{enumerate}[(i)]
\item 
Let $R$ be an integral domain, let $X\subseteq SL_{2}(R)$ be a union of $F_{12}$-orbits (connected components of $SL_{2}(R)$), and let $c$ be a colouring of $X$ whose restriction to each $F_{12}$-orbit is good. Then $c$ extends to a good colouring of $SL_{2}(R)$.
\item
Let $R\subseteq S$ be integral domains. Then $S_{2}(R)$ is a union of $F_{12}$-orbits, and any good colouring of $SL_{2}(R)$ extends to a good colouring of $SL_{2}(S)$.
\item
Let $K\subseteq K_{1}, K_{2}$ be algebraically closed fields, such that $K_{1}$ is independent from $K_{2}$ over $K$ (in some ambient algebraically closed field, and in the sense of $ACF_{0}$). Let $L$ be the compositum of (the field generated by) $K_{1}$ and $K_{2}$.  Then $SL_{2}(L)$ is a disjoint union of $F_{12}$-orbits contained in $SL_{2}(K)$, $F_{12}$-orbits contained in $SL_{2}(K_{1})\setminus SL_{2}(K)$, $F_{12}$-orbits contained in $SL_{2}(K_{2})\setminus SL_{2}(K)$, and $F_{12}$-orbits contained in $SL_{2}(L)\setminus (SL_{2}(K_{1})\cup SL_{2}(K_{2}))$. 
\end{enumerate}
\end{Lemma}
\begin{proof} (i)  First note that by the definition of a good colouring $c$ is a good colouring of $X$.  Each $F_{12}$ orbit in $SL_{2}(R)$ which does not intersect $X$ is disjoint from $X$ and has a good colouring, by Lemma \ref{lem extending color 1 in sec 3}.  And again all these good colourings, together with $c$ give a good colouring of $SL_{2}(R)$.
\newline
(ii)  is immediate, using (i).
\newline
(iii)  The independence assumption tells us that $SL_{2}(K_{1})\cap SL_{2}(K_{2}) = SL_{2}(K)$, from which everything else follows. 
\end{proof}

By part (ii) of the above lemma, if  $(R,c)$ is an existentially closed model of $T$, then $R$ is an algebraically closed field.  From now on we will assume that $R = K$ is an algebraically closed field, and we situate $K$ in a larger saturated algebraically closed field $\tilde K$  from which we can choose generic points of algebraic varieties over $K$ (and write $SL_2$ for $SL_2(\tilde K)$). 

For technical reasons related to a subsequent relative quantifier elimination proof by a back and forth argument we will be concerned with extending the colouring $c$ of $SL_{2}(K)$ to generic points of curves on $(SL_{2})^{n}$. 
Here by a curve on $(SL_{2})^{n}$ over $K$, we mean an (absolutely) irreducible curve $C\subseteq (SL_{2})^{n}$, defined over $K$, for some $n$.  We will call $C$ a {\em good curve} over $K$, or {\em good $K$-curve}, if in addition
if $d = (d_{1},\ldots,d_{n})$ is a generic point of $C$ over $K$, then each  $d_{i}\notin SL_{2}(K)$. 

In the following $\alpha(n) = 2^{n+1} - 1$ as in Lemma \ref{lem extending color 4 in sec 3}.

\begin{Definition} Let $K$ be an algebraically closed field.  Let $n\geq 1$, let $C\subseteq (SL_{2})^{n}$ be a good $K$-curve, and let $c_{0}:[n]\to [4]$. We will say that $C$ is {\em safe for $c_{0}$ over $K$} if  for $d= (d_{1},\ldots,d_{n})$ a generic point of $C$ over $K$, the colouring ${\tilde c}:  \{d_{1},\ldots,d_{n}\} \to [4]$ defined by ${\tilde c}(d_{i}) = c_{0}(i)$, extends to a good colouring ${\tilde c}'$ of  $B_{\alpha(n)}(\{d_{1},\ldots,d_{n}\})\subseteq SL_{2}(\tilde K)$. 
\end{Definition}

Fix $n$. Let us now fix a (quantifier-free) formula $\phi({\bar x}, {\bar y})$ in the language $L_{r}$  of rings such that for any algebraically closed field $F$ and tuple ${\bar a}$ from $F$ (whose length is the same as the length of $\bar y$), $\phi({\bar x}, {\bar a})$, if consistent, defines a good $F$-curve  $D_{\bar a}\subseteq  SL_{2}(F)^{n}$.   We call such $\phi({\bar x},{\bar y})$ a ``\emph{good formula}". 

\begin{Remark}\label{rem good formula} Note that for any algebraically closed field $F$ and good curve  $D\subseteq SL_{2}(F)^{n}$, there is a good formula $\phi({\bar x}, {\bar y})$ and ${\bar a}\in F$ such that $D = D_{\bar a}$.  This is because we can express dimension and irreducibility of algebraic varieties, and we can also express that the projection of a curve onto each coordinate has infinite image. 
\end{Remark}

\begin{Lemma}\label{lem curve is safe is definable} Given $n$, good formula $\phi({\bar x}, {\bar y})$ as above, and a function $c_{0}:[n]\to [4]$, there is a formula $\psi({\bar y})$ in $L_{r}$, such that for every algebraically closed field $K$  and ${\bar a}\in K$, $K\models \psi({\bar a})$ iff  the curve $D_{\bar a}$ is safe for $c_{0}$ over $K$. 
\end{Lemma}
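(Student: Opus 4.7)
The plan is to show that safety depends only on the ``equality pattern'' at the generic point of $D_{\bar a}$ among finitely many algebraic translates of the coordinates of $\bar d$, and that this pattern is $L_r$-definable from $\bar a$ using the absolute irreducibility of $D_{\bar a}$. Concretely, all the structure relevant to safety lives inside the finite set $\mathcal T(\bar d) := \{A_w \cdot d_i : i \in [n], \, w \in \mathcal W\}$, where $\bar d = (d_1, \dots, d_n)$ is a generic point of $D_{\bar a}$, $\mathcal W$ is the set of reduced words in the generators $a(i',j')^{\pm 1}$ of $F_{12}$ of length at most $L := \alpha(n) + 1$, and $A_w \in SL_2(\Z)$ is the matrix represented by $w$. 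Since $F_{12}$ acts freely on $SL_2(\tilde K)$ by left multiplication, $B_{\alpha(n)}(\{d_1, \dots, d_n\})$ together with all of its one-step neighbours is contained in $\mathcal T(\bar d)$, and each $A_w \cdot d_i$ is a fixed polynomial expression (with integer coefficients) in the entries of $d_i$.

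First I would introduce the notion of an \emph{equality pattern} --- by which I mean an equivalence relation $\pi$ on $[n] \times \mathcal W$ --- and, for each such $\pi$, write down the $L_r$-formula
\[
\Pi_\pi(\bar y) \; := \; \bigwedge_{(i,w) \sim_\pi (i',w')} \forall \bar x\,\bigl(\phi(\bar x, \bar y) \to A_w \cdot x_i = A_{w'} \cdot x_{i'}\bigr) \; \land \bigwedge_{(i,w) \not\sim_\pi (i',w')} \exists \bar x\,\bigl(\phi(\bar x, \bar y) \land A_w \cdot x_i \neq A_{w'} \cdot x_{i'}\bigr),
\]
where matrix equality/inequality is interpreted as the conjunction/disjunction of four polynomial equations. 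Because $\phi(\bar x, \bar a)$ defines the absolutely irreducible $K$-curve $D_{\bar a}$, a polynomial in $\bar x$ over $K(\bar a)$ either vanishes identically on $D_{\bar a}$ or is nonzero at the generic point, so $\Pi_\pi(\bar a)$ holds in $K$ iff the equality pattern on $\mathcal T(\bar d)$ at the generic point $\bar d$ is exactly $\pi$.

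Next I would observe that safety depends only on $\pi$. Given $\pi$, the quotient $Y_\pi := ([n] \times \mathcal W)/\pi$ inherits a graph with generator-labelled edges; the ball $B_{\alpha(n)}(\{d_1, \dots, d_n\})$ corresponds precisely to the classes containing some $(i, w)$ with $|w| \leq \alpha(n)$, and the one-step neighbours needed to test the good-colouring axioms at the boundary classes are all recorded inside $Y_\pi$ because we took $L = \alpha(n) + 1$. The partial colouring $\tilde c$ sends $[(i, \mathbf 1)]_\pi$ to $c_0(i)$; whether $\tilde c$ is well-defined and extends to a good colouring of the ball is then a purely combinatorial property of the finite data $(n, c_0, \pi)$, so one can compute the finite set $\mathrm{Safe} = \mathrm{Safe}(n, c_0, \phi)$ of those $\pi$ for which it holds.

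Finally, setting $\psi(\bar y) := \bigvee_{\pi \in \mathrm{Safe}} \Pi_\pi(\bar y)$ gives the desired $L_r$-formula: $K \models \psi(\bar a)$ iff the equality pattern at the generic point of $D_{\bar a}$ belongs to $\mathrm{Safe}$, iff $D_{\bar a}$ is safe for $c_0$ over $K$. The main obstacle will be to verify carefully that the equality pattern on $[n] \times \mathcal W$ really does record everything needed --- both the isomorphism type of the graph $B_{\alpha(n)}(\{d_1, \dots, d_n\})$ together with its outgoing edges, and the consistency and extendability of $\tilde c$ --- and this will reduce to the freeness of the $F_{12}$-action on $SL_2(\tilde K)$ together with the fact that $L$ was chosen one unit larger than $\alpha(n)$.
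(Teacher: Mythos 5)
Your proposal is correct, and at heart it uses the same two facts the paper does: the ball $B_{\alpha(n)}(\{d_1,\dots,d_n\})$ has bounded size so its labelled-graph ``shape'' is determined by one of finitely many equality patterns among the translates $A_w d_i$, and absolute irreducibility of $D_{\bar a}$ lets one read off the generic shape in $L_r$. The paper packages this differently at the last step: it builds a single $L_r$-formula $\chi(\bar z)$ (a disjunction over the finitely many shapes, each clause asserting the existence of a good colouring of the corresponding finite labelled graph) and then sets $\psi(\bar y)$ to say $\exists^\infty \bar x\,(\phi(\bar x,\bar y)\wedge\chi(\bar x))$, relying on elimination of $\exists^\infty$ in ACF and the fact that a constructible subset of an irreducible curve is either finite or cofinite. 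You instead pull the case analysis to the outermost level, pin down the generic equality pattern via the $\forall/\exists$ dichotomy coming from irreducibility, and then offload the colouring question to a purely combinatorial precomputation of the safe patterns. Both are legitimate ways of saying ``the generic point satisfies a first-order condition,'' and both secretly exploit the same constructibility/irreducibility facts; the paper's version is a bit more compact to state, while yours is more explicit about where the finiteness is being used, and in particular makes it transparent that $L=\alpha(n)+1$ is exactly the radius needed to decide the vacuity of the colouring axioms at the boundary of the ball. The only point to keep in mind in writing this up carefully is that $\tilde c$ might fail to be well-defined for a given $\pi$ (if $(i,\mathbf 1)\sim_\pi(i',\mathbf 1)$ with $c_0(i)\neq c_0(i')$); such $\pi$ should simply be excluded from $\mathrm{Safe}$, which your combinatorial criterion already handles.
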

\begin{proof} Note that we are working completely in the language of rings, even though we mention colourings. 
 First note that for a curve $C\subseteq  SL_{2}(K)^{n}$ and any $(d_{1},\ldots,d_{n})\in C(K)$, there is a bound $\kappa_{n}$ on the cardinality of $B_{\alpha(n)}(\{d_{1},\ldots,d_{n}\})$, and
moreover by a case analysis we can identify definably, from properties of the $d_{i}$ the precise cardinality.
There is a formula  $\chi(z_{1},\ldots,z_{n})$ in $L_{r}$ expressing that  $c$ is a good coloring of 
$B_{\alpha(n)}(\{z_{1},\ldots,z_{n}\})$ into $4$ colours $\{1,2,3,4\}$ such that $c(z_{i}) = c_{0}(i)$.

We now bring in the good formula $\phi({\bar x}, {\bar y})$.  Let $\psi({\bar y})$ express that for infinitely many ${\bar x}$ such that  $\phi({\bar x}, {\bar y})$ holds, $\chi({\bar x})$ 
holds. 
 Then  for $K$ algebraically closed, and ${\bar a}\in K$, $K\models \psi({\bar a})$ iff for generic ${\bar d}$ on $D_{\bar a}$ over $K$,  there is a good colouring $c$ of $B_{\alpha(n)}(\{d_{1},\ldots,d_{n}\})$ such that $c(d_{i}) = c_{0}(i)$ for $i=1,\ldots,n$, namely that $D_{\bar a}$ is safe for $c_{0}$ over $K$. 
\end{proof} 

We can now define $T^{*}$.
\begin{Definition}  $T^{*}$ is the $L$-theory expressing of  $(K,c)$, that:
\begin{enumerate}[(i)]
\item $K$ is algebraically closed and $(K,c)\models T$;
\item whenever $C\subseteq (SL_{2})^{n}$ is a good curve over $K$, $c_{0}:[n]\to [4]$ and $C$ is safe for $c_{0}$, then there are infinitely many $d = (d_{1},\ldots,d_{n})\in C(K)$ such that $c_{0}(i) = c(d_{i})$ for $i=1,\ldots,n$.
\end{enumerate} 
\end{Definition}

\begin{Remark}  By Remark \ref{rem good formula} and Lemma \ref{lem curve is safe is definable}, the property (ii) in the definition of $T^*$ above is expressed by an axiom schema, ranging over $n$ and good formulas $\phi({\bar x}, {\bar y}) \in L_r$. 
\end{Remark}

\begin{Lemma}\label{lem extending to a model of star} Any model  $(R,c)$ of $T$ extends to a model $(F, c')$ of $T^{*}$. In particular $(T^{*})_{\forall} = T$ and $T^{*}$  is consistent. 
\end{Lemma}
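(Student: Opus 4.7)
\medskip
\noindent\textbf{Proof plan.}
The plan is a standard chain-of-models construction over $(R, c)$. First, by Lemma 3.6 I replace $R$ by an algebraically closed field $K_{0}\supseteq R$ and extend $c$ to a good colouring $c_{0}$ of $SL_{2}(K_{0})$. I then inductively build a chain
$(K_{0}, c_{0}) \subseteq (K_{1}, c_{1}) \subseteq \cdots$
of models of $T$ with each $K_{i}$ algebraically closed, taking unions at limit stages, arranged so that at each successor step $i+1$ every safe good curve $(C, c_{0}')$ over $K_{i}$ gains a new realization whose colouring matches $c_{0}'$; the union $(F, c^{*})$ will satisfy $T^{*}$.

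The key technical step is the one-step extension. Given a safe good $K_{i}$-curve $C\subseteq (SL_{2})^{n}$ for a colouring $c_{0}':[n]\to[4]$, I pick a generic point $d=(d_{1},\ldots,d_{n})$ of $C$ over $K_{i}$ in a large algebraically closed overfield $\tilde K\supseteq K_{i}$. By definition of safety, the map $d_{i}\mapsto c_{0}'(i)$ extends to a good colouring $\tilde c'$ of $B_{\alpha(n)}(\{d_{1},\ldots,d_{n}\})\subseteq SL_{2}(\tilde K)$. The crucial orbit-separation observation is that since $C$ is good each $d_{i}\notin SL_{2}(K_{i})$, and because $F_{12}\subseteq SL_{2}(\Z)\subseteq SL_{2}(K_{i})$, every $F_{12}$-orbit in $SL_{2}(\tilde K)$ lies either entirely inside or entirely outside $SL_{2}(K_{i})$. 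Hence $c_{i}$ and $\tilde c'$ are defined on disjoint unions of orbits. For each orbit $O$ disjoint from $SL_{2}(K_{i})$ that meets $\{d_{1},\ldots,d_{n}\}$ in a set $S$, the restriction of $\tilde c'$ to $B_{\alpha(|S|)}(S)\subseteq O$ is good (using $\alpha(|S|)\leq\alpha(n)$), and Lemma 3.5 extends it to a good colouring of all of $O$; the remaining orbits in $SL_{2}(\tilde K)\setminus SL_{2}(K_{i})$ are coloured via Lemma 3.2 applied to an arbitrary singleton in each. Combined with $c_{i}$, this yields a good colouring of all of $SL_{2}(\tilde K)$. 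Setting $K_{i+1}:=\overline{K_{i}(d_{1},\ldots,d_{n})}$, the $F_{12}$-invariance of $SL_{2}(K_{i+1})$ lets me restrict the colouring just produced to obtain a model $(K_{i+1}, c_{i+1})\models T$ extending $(K_{i}, c_{i})$, with $d$ a new realization of $(C, c_{0}')$ inside $K_{i+1}$.

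To conclude, I iterate the one-step extension with standard bookkeeping so that at stage $i+1$ every safe good curve over $K_{i}$ (for each possible colouring $c_{0}'$) is handled at least once; at most $|K_{i}|$ such data must be processed, so a sub-chain of length $|K_{i}|$ suffices. Set $(F, c^{*})=\bigcup_{i<\omega}(K_{i}, c_{i})$. Any safe good curve $(C, c_{0}')$ over $F$ is defined over some $K_{j}$, and by Lemma 3.9 together with model-completeness of $ACF_{0}$, safety is preserved up the chain; hence a fresh generic realization is added at every stage $k\geq j+1$, giving infinitely many realizations in $F$. Thus $(F, c^{*})\models T^{*}$, which gives consistency of $T^{*}$, while $(T^{*})_{\forall}=T$ follows because any universal sentence failing in some model of $T$ would fail in its embedding into a model of $T^{*}$. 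The main obstacle is the patching step producing a good colouring of $SL_{2}(\tilde K)$ extending both $c_{i}$ and $\tilde c'$ simultaneously, which is handled by the orbit-separation observation together with the extension lemmas 3.2 and 3.5.
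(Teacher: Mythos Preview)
Your proposal is correct and follows essentially the same approach as the paper: reduce to $K$ algebraically closed via Lemma~3.6, perform the one-step extension by taking a generic point of a safe good curve, use the orbit-separation observation (that $SL_2(K)$ and its complement are each unions of $F_{12}$-orbits) together with Lemma~3.5 to extend the colouring, and then close off with a chain argument. The paper applies Lemma~3.5 once to the whole of $SL_2(F)\setminus SL_2(K)$ rather than orbit by orbit, and leaves the bookkeeping and the preservation-of-safety step (your appeal to Lemma~3.9 and elementarity in $ACF_0$) to the reader, but these are exactly the details the paper calls ``routine.''
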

\begin{proof}  Fix $(R,c)\models T$ and as mentioned after Lemma \ref{lem extending color to integral domain} we may assume $R = K$ to be an algebraically closed field. We will fix a good curve $C\subset (SL_{2})^{n}$ over $K$ and $c_{0}:[n] \to [4]$, such that $C$ is safe for $c_{0}$,  and find an extension $(F,c')$ of $(K,c)$ and $d = (d_{1},\ldots,d_{n})\in C(F)$ such that $c'(d_{i}) = c_{0}(i)$ for $i=1,\ldots,n$. We will also choose $F$ algebraically closed.  So in $(F,c')$ we satisfy Axiom Schema (i) as well as a weaker form of one instance of  the Axiom Schema (ii) for $T^{*}$, namely that there is at least one, rather than infinitely many, $d$ satisfying the required conditions.  Extending $(K,c)$  to a model of $T^{*}$ is then a routine union of chains argument, including finding the infinitely many $d$ as above.  Details are left to the reader. 

Simply choose $d = (d_{1},\ldots,d_{n})$ to be a point of $C$ in $\tilde K$ generic over $K$.  By goodness of $C$, each $d_{i}\in SL_{2}(\tilde K)\setminus SL_{2}(K)$.  By assumption there is a good 
colouring $c''$ of  $B_{\alpha(n)}(\{d_{1},\ldots,d_{n}\})$ such that  $c''(d_{i}) = c_{0}(i)$ for $i=1,\ldots,n$.
Let $F$ be the algebraic closure of  the field generated by $K$ and $d$.  And let $X =  SL_{2}(F)\setminus SL_{2}(K)$.  Then $X$ is a union of $F_{12}$-orbits and  $B_{\alpha(n)}(\{d_{1},\ldots,d_{n}\}) \subset 
X$. Hence, by Lemma \ref{lem extending color 4 in sec 3}, there is a good colouring $c'''$ of $X$ with $c'''(d_{i}) = c_{0}(i)$ for $i=1,\ldots,n$.  As $X$ and $SL_{2}(K)$ are both unions of $F_{12}$-orbits, $c\cup c'''$ will 
be a good colouring of $SL_{2}(F)$ extending $c$. Denote $c\cup c'''$ by $c'$,  and we have produced our required extension $(F,c')$ of $(K,c)$.
\end{proof}

\begin{Lemma}\label{lem: back n forth Sec 3}  Let $(F_{1},c_{1})$, $(F_{2}, c_{2})$ be $\aleph_{1}$-saturated models of $T^{*}$. Let $I$ be the collection of partial isomorphisms between (nonempty) countable substructures  of $F_{1}$, $F_{2}$ respectively which are of the form $(K_{1}, c_{1}|K_{1})$, $(K_{2}, c_{2}|K_{2})$  where $K_{1}, K_{2}$ are algebraically closed fields.
Then $I$ has the back-and-forth property. 
\end{Lemma}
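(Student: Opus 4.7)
Let $f \colon (K_{1}, c_{1}|K_{1}) \to (K_{2}, c_{2}|K_{2})$ lie in $I$ and let $a \in F_{1}$; we extend $f$ to include $a$ in its domain, the back-direction being symmetric. If $a \in K_{1}$ there is nothing to do, so assume $a \notin K_{1}$; then $a$ is transcendental over the algebraically closed $K_{1}$. Set $K_{1}' := K_{1}(a)^{\mathrm{alg}} \subseteq F_{1}$, still countable, and enumerate $SL_{2}(K_{1}') \setminus SL_{2}(K_{1})$ as $(m_{k})_{k \geq 1}$ with $m_{1} := M_{a} = \bigl(\begin{smallmatrix}1 & a \\ 0 & 1\end{smallmatrix}\bigr)$, so that $a$ is an entry of $m_{1}$. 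For each $n$ let $W_{n} \subseteq (SL_{2})^{n}$ be the Zariski closure of $\bar m^{(n)} := (m_{1}, \dots, m_{n})$ over $K_{1}$; since $K_{1}$ is algebraically closed $W_{n}$ is absolutely irreducible, it is a curve (since $K_{1}(\bar m^{(n)}) \subseteq K_{1}(a)^{\mathrm{alg}}$ has transcendence degree one over $K_{1}$), and since each $m_{i} \notin SL_{2}(K_{1})$ it is a \emph{good} $K_{1}$-curve. Setting $c_{0}^{(n)}(i) := c_{1}(m_{i})$, the restriction of $c_{1}$ to $B_{\alpha(n)}(\bar m^{(n)}) \subseteq SL_{2}(F_{1})$ is a good colouring extending $c_{0}^{(n)}$, so by Lemma 3.9 (safety is expressible by an $L_{r}$-formula in the parameters) $W_{n}$ is safe for $c_{0}^{(n)}$ over $K_{1}$; because $f \colon K_{1} \to K_{2}$ is an $L_{r}$-isomorphism, $f(W_{n})$ is also a good $K_{2}$-curve safe for $c_{0}^{(n)}$.

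Consider the partial type $p(x)$ over $K_{2}$ in $(F_{2}, c_{2})$ consisting of ``$x$ is transcendental over $K_{2}$'' together with, for each $n \geq 1$, the formula
\[
\exists y_{1}, \dots, y_{n} \colon (y_{1}, \dots, y_{n}) \in f(W_{n}) \,\wedge\, y_{1} = \bigl(\begin{smallmatrix}1 & x \\ 0 & 1\end{smallmatrix}\bigr) \,\wedge\, \bigwedge_{i=1}^{n} C_{c_{0}^{(n)}(i)}(y_{i}).
\]
A finite fragment of $p(x)$ reduces to one such formula (for the largest $n$) together with finitely many polynomial inequalities in $x$. Axiom Schema (ii) of $T^{*}$, applied in $(F_{2}, c_{2})$, produces infinitely many $\bar y \in f(W_{n})(F_{2})$ with the prescribed colours; being an infinite definable set in the $\aleph_{1}$-saturated $F_{2}$, this set has cardinality at least $\aleph_{1}$. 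The map $\bar y \mapsto$ top-right entry of $y_{1}$ is the base change of the projection $f(W_{n}) \to \mathbb{A}^{1}$, which is dominant (because on $W_{n}$ it sends $\bar m^{(n)}$ to the transcendental element $a$) and hence generically finite; therefore it hits uncountably many $x$-values in $F_{2}$, of which only countably many lie in $K_{2}$, so some transcendental $x$ witnesses the fragment. Hence $p(x)$ is finitely satisfiable, and by $\aleph_{1}$-saturation realised by some $a' \in F_{2}$.

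To assemble a coherent extension of $f$, for each $n$ let $S_{n}$ denote the (finite, non-empty) set of tuples $(y_{1}, \dots, y_{n}) \in SL_{2}(F_{2})^{n}$ witnessing the $n$-th conjunct of $p(a')$; finiteness holds because the fibre of $f(W_{n}) \to \mathbb{A}^{1}$ over the transcendental point $a'$ is finite, and these points moreover lie in $SL_{2}(K_{2}(a')^{\mathrm{alg}})$. The restriction map $(y_{1}, \dots, y_{n+1}) \mapsto (y_{1}, \dots, y_{n})$ sends $S_{n+1}$ into $S_{n}$, making $\bigsqcup_{n} S_{n}$ a finitely branching infinite tree; K\"{o}nig's lemma yields a compatible sequence $(s_{n})_{n}$, and we define $g \colon \{m_{k}\}_{k \geq 1} \to SL_{2}(K_{2}(a')^{\mathrm{alg}})$ by $g(m_{k}) := (s_{n})_{k}$ for any $n \geq k$. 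Every polynomial relation over $K_{1}$ satisfied by $(m_{1}, \dots, m_{n})$ is sent by $f$ to a relation satisfied by $(g(m_{1}), \dots, g(m_{n}))$ (both tuples lying in the respective curves), so in particular all equalities between entries are preserved, and $g$ extends uniquely (using $f$ on $K_{1}$) to a ring homomorphism on the ring generated by $K_{1}$ together with the entries of all $m_{k}$. Every $b \in K_{1}' \setminus K_{1}$ is the top-right entry of $\bigl(\begin{smallmatrix}1 & b \\ 0 & 1\end{smallmatrix}\bigr) \in SL_{2}(K_{1}') \setminus SL_{2}(K_{1})$, so this ring equals $K_{1}'$ and $g$ becomes the required field isomorphism $K_{1}' \to K_{2}(a')^{\mathrm{alg}}$ extending $f$, with $g(a) = a'$, preserving the colouring on $SL_{2}(K_{1}')$. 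The main obstacle is the finite satisfiability step: producing a transcendental witness for $p(x)$ requires upgrading the ``infinitely many'' from Axiom Schema (ii) to ``uncountably many'' via $\aleph_{1}$-saturation, so as to escape the countable subfield $K_{2}$.
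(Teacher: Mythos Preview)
Your proof is correct, but it takes a substantially more roundabout path than the paper's argument, and the extra machinery (the single-variable type $p(x)$, the cardinality argument for the projection, and the K\"onig's lemma step) is avoidable.

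The paper works directly with the quantifier-free $L$-type of an enumeration of $L_{1}=K_{1}(a)^{\mathrm{alg}}$ over $K_{1}$, transported via $f$ to a partial type over $K_{2}$ in countably many variables. Since $(F_{2},c_{2})$ is $\aleph_{1}$-saturated and $K_{2}$ is countable, it suffices to check finite satisfiability. A finite fragment amounts to specifying the quantifier-free $L_{r}$-type and the colours of finitely many $d_{1},\ldots,d_{n}\in SL_{2}(L_{1})\setminus SL_{2}(K_{1})$; these form a generic point of a good $K_{1}$-curve $C_{1}$ which is safe for the induced colouring, so $C_{2}=f(C_{1})$ is safe over $K_{2}$ (hence over $F_{2}$), Axiom Schema (ii) gives infinitely many correctly-coloured points on $C_{2}(F_{2})$, and $\aleph_{1}$-saturation over the countable $K_{2}$ yields one which is \emph{generic} over $K_{2}$. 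That is the whole proof.

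By contrast, you compress the target into a type $p(x)$ in \emph{one} variable with existential quantifiers over the $y_{i}$, which forces you afterwards to extract coherent witnesses for all $n$ simultaneously via K\"onig's lemma on the inverse system $(S_{n})$. This works---your checks that the projection of $W_{n+1}$ lands in $W_{n}$, that the fibres over the transcendental $a'$ are finite, and that genericity of $s_{n}$ over $K_{2}$ follows from transcendence of $a'$ are all sound---but none of it is needed if one simply realises the many-variable type directly. The one genuine idea (good curve $\Rightarrow$ safe $\Rightarrow$ infinitely many coloured points $\Rightarrow$ a generic one by saturation) is identical in both arguments; you have wrapped it in an unnecessary reduction-and-reassembly. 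A small presentational point: you should state explicitly that safety of $f(W_{n})$ over $K_{2}$ implies safety over $F_{2}$ (via Lemma~3.9 and $K_{2}\preceq F_{2}$ in $L_{r}$), since Axiom Schema (ii) is applied in $(F_{2},c_{2})$; the paper flags this with an ``in particular''.
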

\begin{proof} Suppose that we are given an isomorphism $f$ between $(K_{1},c_{1}|K_{1})$ and  $(K_{2}, c_{2}|K_{2})$. It is enough to extend $f$ to  $g$ with domain $L_{1}\supseteq K_{1}$ where $L_{1}$ is algebraically closed and of transcendence degree  $1$ over $K_{1}$.  By compactness, it suffices to prove the following.

\begin{Claim*}
For every finite tuple $d_{1},\ldots,d_{n}$ from $SL_{2}(L_{1})$ there are $e_{1},\ldots,e_{n}$ in $SL_{2}(F_{2})$ such that  the map $g$ which extends $f$ and takes $d_{i}$ to $e_{i}$ for $i=1,\ldots,n$  preserves quantifier-free $L_{r}$-types, as well as satisfying $c_{2}(e_{i}) = c_{1}(d_{i})$ for $i=1, \ldots, n$. 
\end{Claim*}
\noindent{\em Proof of Claim.}  We may clearly assume that  $d_{1},\ldots,d_{n}\notin SL_{2}(K_{1})$ for $i=1,\ldots,n$. 
 It follows that $(d_{1},\ldots,d_{n})$ is a generic over $K_{1}$ point of a good curve $C_{1}\subset SL_{2}^{n}$ over $K_{1}$. 
Let $c_{0}:[n]\to [4]$ be defined by $c_{0}(i) = c_{1}(d_{i})$.  Hence $C_{1}$ is safe for $c_{0}$ over $K_{1}$.  As $f$ is an isomorphism of algebraically closed fields, the curve $C_{2} 
= f(C_{1})$ is safe for $c_{0}$ over $K_{2}$. In particular $C_{2}$ is safe for $c_{0}$ over $F_{2}$.  However $(F_{2}, c_{2})$ is a model of $T^{*}$, so Axiom Schema (ii) implies that there are 
infinitely many $e = (e_{1},\ldots,e_{n})\in C_{2}(F_{2})$ such that $c_{0}(i) = c_{2}(e_{i})$ for $i=1,\ldots,n$.  By $\aleph_{1}$-saturation of $(F_{2},c_{2})$  (and countability of $K_{2}$) we 
can find  $e = (e_{1},\ldots,e_{n})\in SL_{2}(F_{2})$ a generic over $K_{2}$ point of $C_{2}$ such that  $c_{0}(i) = c_{2}(e_{i})$ for $ i=1,\ldots,n$. 
As the  quantifier-free $L_{r}$-type of  $e$ over $K_{2}$ is the image under $f$ of the quantifier-free $L_{r}$-type of $d$ over $K_{1}$, and $c_{1}(d_{i}) = c_{0}(i) = c_{2}(e_{i})$ for $i=1, \ldots ,n$ we have proved the claim, and hence the lemma. 
\end{proof} 

\begin{Theorem}\phantomsection\label{cor of back and forth sec 3} 
\begin{enumerate}[(i)]
\item Let ${\bar a} = (a_{\alpha}: \alpha < \gamma)$, ${\bar b} = (b_{\alpha} : \alpha < \gamma)$ be tuples of the same length $\gamma$ in models $M, N$ of  $T^{*}$, where $\gamma$ is an ordinal. Then $tp_{M}({\bar a}) = tp_{N}({\bar b})$ iff the map taking $a_{\alpha}$ to $b_{\alpha}$  for $\alpha < \gamma$ extends to an isomorphism between the substructures  $(K,c)$ of $M$ and $(K',c')$ of $N$ where $K = acl({\bar a})$ and $K' = acl({\bar b})$ in the sense of fields.
\item  In a model $M$ of $T^{*}$, the model theoretic algebraic closure of a subset $A$ of $M$ coincides with the (field theoretic) algebraic closure of the field generated by $A$.
\item The completions of $T^{*}$ are determined by the isomorphism types of the algebraic closure of $\Q$ equipped with an $L$-structure.
\item $T^{*}$ is the model companion of $T$.
\end{enumerate} 
\end{Theorem}
\begin{proof} (i) is an immediate consequence of Lemma \ref{lem: back n forth Sec 3}.  

(ii). In the light of (i) we have to check that if $M$  is a saturated model of $T^{*}$ and $(K,c)$ is a (small) substructure of $M$ where $K$ is algebraically closed as a field, then for any $a\in M\setminus K$,  there are infinitely many realizations of the type of $a$ over $K$ in the sense of the ambient model $M$ of $T^{*}$.   Let $K'$ be the (field-theoretic)  algebraic closure in $M$ of the field $K(a)$. Then $(K',c|K')$ is an $L$-structure whose isomorphism type determines its type by (i).
Now we build abstractly another ``algebraically closed" model of $T$, as follows.  Let  $\tilde K$ be a large algebraically closed field containing $K$ and let  $(a_{i}:i<\omega)$ in $\tilde K$ 
be algebraically independent over $K$.  Let $K'_{i}$ be the (field-theoretic) algebraic closure of $K(a_{i})$.  Fix field isomorphisms  $f_{i}$ of $K'$ with $K'_{i}$ over $K$ which take $a$ to 
$a_{i}$, and use these to copy the additional structure (the colouring) to the $K'_{i}$. So each $K'_{i}$ is equipped with a good colouring $c_{i}$ extending $c$ on $K$.  Let $F$ be the 
field generated by $\bigcup_i K'_{i}$. Notice that  $\bigcup_{i}SL_{2}(K'_{i})$ is a union of $F_{12}$-orbits inside $F$ and $\bigcup_{i}c_{i}$ gives a good colouring of this union.  Hence by Lemma \ref{lem extending color to integral domain}, we can extend $\bigcup_{i} c_{i}$ to a good colouring $c'$ of $SL_2(F)$ to get $(F,c')\models T$.  Embed $(F,c')$  in a model $N$ of $T^{*}$, and we see by (i), 
that each $a_{i}$ has the same type over $K$ in $N$, which also equals  $tp(a/K)$ in $M$. 

(iii) is a special case of (i) (for the empty tuples).

(iv)  is another special case of (i): let $M\subseteq N$ be models of $T^{*}$. Then the identity map $M\to N$ is an isomorphism of $L$-structures whose underlying field is algebraically closed, hence an elementary map by (i). So $T^{*}$ is model complete, hence by Lemma \ref{lem extending to a model of star} is  the model companion of $T$.
\end{proof}

\subsection{Simplicity and the proof of Theorem \ref{thm Main theorem 2}.}\label{subs 3.4}

To prove simplicity of the theory $T^{*}$ (namely of any of its completions) we will make use of Theorem 4.2 from \cite{Kim-Pillay}  which says that  it suffices to prove that one has a ``notion of independence" which satisfies the Independence Theorem over a model.  See \cite{Kim-Pillay} for the notion of independence.  The Independence Theorem over a model states that (in the context of a saturated model $\bar M$ of a complete theory), if $M$ is a small elementary substructure of $\bar M$, and $a,b, d_{0}, d_{1}$ are tuples such that $a$ and $b$ are independent over $M$, $d_{0}$ and $a$ are  independent over $M$, $d_{1}$ and $b$ are independent over $M$, and $tp(d_{0}/M) = tp(d_{1}/M)$, THEN  there is $d$ realizing $tp(d_{0}/M,a)$ as well as $tp(d_{1}/M,b)$ such that $d$ is independent from $M, a, b$ over $M$. 

\begin{Proposition}\label{prop completions of star are simple sec 3} Every completion of $T^{*}$ is simple, and nonforking independence coincides with independence in the sense of the reduct to $ACF_{0}$. In particular the $SU$-rank of $x=x$ is $1$. 
\end{Proposition}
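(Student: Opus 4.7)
The plan is to apply the Kim--Pillay theorem characterizing simple theories via an abstract independence relation. Define a ternary relation $\perp^*$ on subsets of a monster model of (a completion of) $T^*$ by declaring $A \perp^*_C B$ iff $acl_{\mathrm{field}}(AC)$ and $acl_{\mathrm{field}}(BC)$ are algebraically independent over $acl_{\mathrm{field}}(C)$, i.e.\ iff ACF-independence holds in the reduct. We verify the Kim--Pillay axioms for $\perp^*$ (invariance, symmetry, monotonicity, transitivity, finite character, local character, extension, and the Independence Theorem over algebraically closed sets); this will imply both simplicity and the coincidence of nonforking with $\perp^*$. The $SU$-rank statement then follows since the home sort in $ACF_0$ has transcendence-degree $U$-rank $1$, and our independence agrees with ACF-independence.

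All axioms except the Independence Theorem transfer quickly from the corresponding properties of forking in $ACF_0$, using Corollary \ref{prop: Tstar in Sec 2}-style facts from the previous subsection --- in particular Corollary 3.13(iv) (algebraic closure in $T^*$ coincides with field-theoretic algebraic closure) and Corollary 3.13(i) (types in $T^*$ are determined by isomorphism types of their field-theoretic algebraic closures equipped with the induced good colouring). For existence/extension, given a desired ACF-extension, one uses Lemma 3.6 to extend the ambient colouring to the larger field and then embeds into a model of $T^*$.

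The substantive step is the Independence Theorem. Given an algebraically closed $E$, realizations $a_1 \equiv_E a_2$, and parameters with $b_1 \perp^*_E b_2$ and $a_i \perp^*_E b_i$ for $i=1,2$, we must find $a$ with $a \equiv_{Eb_i} a_i$ for $i=1,2$ and $a \perp^*_E b_1b_2$. First, apply the Independence Theorem in $ACF_0$ (stable, so Independence Theorem holds) to produce a field-theoretic candidate $a$ with the correct ACF-types over $Eb_1$ and $Eb_2$ and ACF-independent from $b_1b_2$ over $E$. Set $F_i = acl_{\mathrm{field}}(Eab_i)$ and $F = acl_{\mathrm{field}}(F_1F_2)$. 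The field isomorphisms $F_i \to acl_{\mathrm{field}}(Ea_ib_i)$ sending $a \mapsto a_i$ transport the ambient good colourings to good colourings $c_i$ on $SL_2(F_i)$. By ACF-independence we have $F_1 \cap F_2 = acl_{\mathrm{field}}(Ea)$, and because $a_1 \equiv_E a_2$ is witnessed, by Corollary 3.13(i), by an $E$-isomorphism of $L$-structures, $c_1$ and $c_2$ agree on $SL_2(F_1 \cap F_2)$. Hence their union is a good colouring of $SL_2(F_1) \cup SL_2(F_2)$. Since $F_{12} \subseteq SL_2(\Z)$ acts on each $SL_2(F_i)$ by left multiplication (as each $F_i$ is a ring), the set $SL_2(F_1) \cup SL_2(F_2)$ is a union of $F_{12}$-orbits inside $SL_2(F)$, and we extend to a good colouring of $SL_2(F)$ orbit by orbit as in the proof of Lemma 3.6 (each new orbit being handled by Lemma 3.2). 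Embedding the resulting model of $T$ into a model of $T^*$ and invoking Corollary 3.13(i) yields an $a$ with the required types.

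The main obstacle is this colouring-amalgamation inside the Independence Theorem; once it is in place, the remaining axioms and the $SU$-rank computation reduce to standard properties of $ACF_0$.
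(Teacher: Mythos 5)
Your overall strategy is the same as the paper's: apply the Kim--Pillay criterion to ACF-independence, with the Independence Theorem established by amalgamating good colourings and then embedding via the back-and-forth lemma. The paper works over a small model $M$ rather than an algebraically closed set $E$, but that is a cosmetic difference.

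There is, however, a genuine gap in the amalgamation step. Your amalgam consists only of the transported colourings $c_1$ on $SL_2(F_1)$ and $c_2$ on $SL_2(F_2)$, and you then extend arbitrarily to a good colouring $c''$ of $SL_2(F)$. But to conclude from the abstract model $(F,c'')$ that there is $a\in\bar M$ with $a\equiv_{Eb_1}a_1$ and $a\equiv_{Eb_2}a_2$, you must embed $(F,c'')$ into $\bar M$ \emph{over} $acl_{\mathrm{field}}(Eb_1b_2)$. By the back-and-forth lemma (Lemma 3.13 of the paper, and its consequence Corollary 3.14(i)), that requires $c''$ to agree with the ambient colouring $c$ on all of $SL_2(acl_{\mathrm{field}}(Eb_1b_2))$ --- not just on $SL_2(acl_{\mathrm{field}}(Eb_1))\cup SL_2(acl_{\mathrm{field}}(Eb_2))$, which is the only part of $acl_{\mathrm{field}}(Eb_1b_2)$ that $SL_2(F_1)\cup SL_2(F_2)$ actually meets. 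If $c''$ is chosen arbitrarily on $SL_2(acl_{\mathrm{field}}(Eb_1b_2))\setminus\bigl(SL_2(F_1)\cup SL_2(F_2)\bigr)$, there is no reason an embedding over $Eb_1b_2$ should exist, and then you cannot recover the required types in $\bar M$. The fix is exactly the paper's: include $c|_{SL_2(acl_{\mathrm{field}}(Eb_1b_2))}$ in the amalgam from the start. One then has three pairwise compatibility checks rather than one: $c_1$ versus $c$ on $SL_2(acl_{\mathrm{field}}(Eb_1))$ and $c_2$ versus $c$ on $SL_2(acl_{\mathrm{field}}(Eb_2))$ (both automatic because the transport isomorphisms fix $Eb_i$), together with the $c_1$ versus $c_2$ check on $SL_2(acl_{\mathrm{field}}(Ea))$ that you did carry out. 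A secondary bookkeeping point: the transport isomorphisms $\tau_i\colon F_i\to acl_{\mathrm{field}}(Ea_ib_i)$ must be chosen so that $\tau_1\circ\tau_2^{-1}$ restricted to $acl_{\mathrm{field}}(Ea_2)$ is \emph{the} colour-preserving $E$-isomorphism witnessing $a_1\equiv_E a_2$; an arbitrary field isomorphism over $E$ sending $a_2\mapsto a_1$ need not preserve the colouring, since $acl$ in $T^*$ is genuine field-theoretic algebraic closure and distinct field conjugates over $E(a_2)$ may have different $L$-types. The paper finesses this by fixing consistent enumerations of the relevant algebraically closed sets as tuples.
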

\begin{proof} Fix a saturated model $\bar M$ of $T^{*}$.  We let $c$ denote the colouring on $\bar M$. 

Types will refer to types in $\bar M$ and $tp_{ACF}$ to types in the reduct of ${\bar M}$ to the field language.  We will  prove that  $ACF$-independence is a notion of independence which satisfies the Independence Theorem over a model, as described above.   The only nontrivial thing to check in terms of being a notion of independence is the extension property, but it follows easily from Theorem \ref{cor of back and forth sec 3}(ii), or by our method of proof below of the Independence Theorem. So it remains to prove that
$ACF$-independence  in $\bar M$ satisfies the Independence Theorem over a model: namely suppose $M$ is a small elementary substructure of $\bar M$ and $a,b, d_{0}, d_{1}$ are tuples such that $a$ and $b$ are $ACF$ independent over $M$, $d_{0}$ and $a$ are $ACF$ independent over $M$, $d_{1}$ and $b$ are $ACF$-independent over $M$, and $tp(d_{0}/M) = tp(d_{1}/M)$, THEN  there is $d$ realizing $tp(d_{0}/M,a) \cup tp(d_{1}/M,b)$ such that $d$ is $ACF$-independent from $M, a, b$ over $M$. 
Let $D_{0} = acl(d_{0}M)$, $D_{1} = acl(d_{1}M)$, $A = acl(aM)$ and $B = acl(bM)$. In spite of the notation we will enumerate $D_{0}$, $D_{1}$, $A$ and $B$  (and other sets introduced below) in a consistent fashion (vis-a-vis, $d_{0}, d_{1}, a, b)$  as tuples and treat them as such. In particular $D_{0}$ and $D_{1}$ will have the same type over $M$ in the structure $\bar M$ so also in the $ACF$ reduct.  By stationarity of this type in the $ACF$-reduct, if $D$ realizes this $ACF$-type,  $ACF$-independently from $A\cup B$ over $M$ then $D$ realizes $tp_{ACF}(D_{0}/A)$ as well as $tp_{ACF}(D_{1}/B)$. 

Let $\sigma_{0}$ be a (field) isomorphism between $acl(D_{0}A)$ and $acl(DA)$ over $A$  (again treating these consistently as tuples), and likewise $\sigma_{1}$ an isomorphism between $acl(D_{1}B)$ and $acl(DB)$. Use the isomorphisms $\sigma_{0}$ and $\sigma_{1}$ to transport the colourings of $SL_{2}(acl(D_{0}A))$ and $SL_{2}(acl(D_{1}B))$ (coming from the structure $\bar M$) to
$SL_{2}(acl(DA))$ and $SL_{2}(acl(DB))$, which we call $c_{0}$ and $c_{1}$. 
 Let $F$ be the subfield of $\bar M$ generated by $acl(AB)$, $acl(DA)$ and $acl(DB)$.  Note that $c_{0}$ and $c_{1}$ agree on $D$, $c_{0}$ and $c$ agree on $A$, and $c_{1}$ and $c$ agree on $B$.   As we have that $D$ is $ACF$-independent from $AB$ over each of $A$, $B$, and $A$ is $ACF$-independent from $B$ over $D$, we conclude using Lemma \ref{lem extending color to integral domain} that the colouring $c'$ obtained by taking the union of $c|acl(AB)$, $c_{0}$ and $c_{1}$, is well-defined, and extends to a good colouring $c''$ of $SL_{2}(F)$.
By Lemma \ref{lem: back n forth Sec 3} we can embed $(F,c'')$ into $\bar M$ over $acl(AB)$, by a map $\sigma$. Let $D' = \sigma(D)$. So $D'$ is $ACF$-independent from $AB$ over $M$.
\begin{Claim*} 
$D'$ realizes $tp(D_{0}/A) \cup tp(D_{1}/B)$.
\end{Claim*}
\noindent {\em Proof of Claim.} We let $alg(C)$ denote the field-theoretic algebraic closure of the subfield of $\bar M$ generated by $C$.

Then $\sigma\circ\sigma_{0}(alg(D_{0}A)) = alg(D'A)$, and for every $e\in SL_{2}(alg(D_{0}A))$ we have that $c(e) = c_{0}
(\sigma_{0}(e)) = c'(\sigma_{0}(e)) = c(\sigma\circ\sigma_{0}(e))$. Thus we have an isomorphism over $A$ between 
\begin{gather*}
	\left( alg(D_{0}A), c|SL_{2}(alg(D_{0}A)) \right) \textrm{ and } (alg(D'A), c|SL_{2}(alg(D'A))).
\end{gather*}
Hence by Theorem \ref{cor of back and forth sec 3}(i), $D'$ realizes $tp(D_{0}/A)$.  By a similar proof, $D'$ realizes $tp(D_{1}/B)$. 

This proves the claim as well as the proposition. 
\end{proof}

\begin{proof*}{Proof of Theorem \ref{thm Main theorem 2}: existence of a non definably amenable group definable in a simple theory}
This is precisely as mentioned in the introduction to Section 3:   Fix a model $M = (K,c)$ of $T^{*}$ and let $T^{**}$ be the complete theory of $M$.  Proposition \ref{prop completions of star are simple sec 3} says that $T^{**}$ is simple of $SU$-rank $1$.  Let $G = SL_{2}(K)$ as a group definable in $M$ and we use notation as in Section \ref{subs 3.1}.   Assume for the sake of contradiction that $\mu$ is a left invariant Keisler measure on $G$. Fix an arbitrary $a \in G$ and $i\in [4]$. Then  by Axiom (ii) (of $T$), 
$G =  a(i,1)^{-1}C_{i} \cup a(i,2)^{-1}C_{i} \cup a(i,3)^{-1}C_{i}$. So by invariance of $\mu$, $\mu(C_{i}) \geq 1/3$.  On the other hand the $C_{i}$ for $i\in [4]$ partition $G$, whereby  $\mu(G) \geq  4/3$, a contradiction. 
\end{proof*}

\begin{proof}[Proof of Corollary \ref{cor simple and non amen}.]
Let  $M = (K,c)$ be a saturated model of $T^{*}$. Adjoin an ``affine copy" of $SL_{2}(K)$ as a new sort. Namely add a new sort $S$ together with a regular action of $SL_{2}(K)$ on $S$, to get a (saturated) structure $M'$. Then there is a unique $1$-type over $\emptyset$ realized in $S$.   Any automorphism invariant Keisler measure on the sort $S$ would yield a translation invariant Keisler measure on $SL_{2}(K)$, contradicting Theorem \ref{thm Main theorem 2}.  
\end{proof}

\begin{Remark}
Combining the proof of Theorem \ref{thm Main theorem 2} with the setting of \cite{Chatzidakis-Pillay}, it should be possible to obtain the following generalization of Theorem \ref{thm Main theorem 2}. Let $T$ be a simple model complete theory eliminating $\exists^{\infty}$ quantifier, and $G$ a definable group containing a non-abelian free subgroup  (as an abstract group, not necessarily definable). Then there exists a simple theory $T^*$ expanding $T$ so that forking in $T^*$ coincides with forking in the reduct $T$ (in particular, $T^*$ has the same SU-rank as $T$) and $G$ is not definably amenable in $T^*$.
\end{Remark}

\section{Paradoxical decompositions and additional results.}\label{sec 4}

Lying behind the second example (and also in a sense the first example) is the theory of ``definable paradoxical decompositions" from \cite{NIPI}, giving  necessary and sufficient conditions for a group $G$ definable in a structure $M$ to be definably amenable. When the structure $M$ is a model of set theory and $G$ is just a group, or just when  {\em all} subsets of $G$ are definable, then we are in the context of amenability of a discrete group $G$, and where there are classical results giving equivalent conditions.
In any case the theory of definable paradoxical decompositions gives some interesting invariants of non definably amenable groups and we can ask about the invariants of the  example in Section \ref{sec 3}. This and various other things are discussed in this final section.

\subsection{Definable paradoxical decompositions}\label{subs 4.1}

Let us first recall the (classical) notion of a {\em paradoxical decomposition} of a discrete or abstract  group $G$. We will abbreviate this notion as $cpd$ for ``classical paradoxical decomposition". A $cpd$ for $G$ consists of pairwise disjoint subsets  $X_{1},\ldots,X_{m}, Y_{1},\ldots,Y_{n}$ of $G$ and $g_{1},\ldots,g_{m}, h_{1},\ldots,h_{n}\in G$ such that  $G$ is the union of the $g_{i}X_{i}$ and is also the union of the $h_{j}Y_{j}$.  Recall that the discrete group $G$ is said to be {\em amenable} if there is a (left) translation invariant finitely additive probability measure on the collection (Boolean algebra) of {\em all} subsets of $G$. The well-known theorem of Tarski is:
\begin{Fact} Let $G$ a be group. Then $G$ is amenable if and only if $G$ has no paradoxical decomposition. 
\end{Fact}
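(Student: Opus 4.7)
The proof splits into two directions, with the first routine and the second the technical heart of Tarski's theorem. For the easy direction (amenable implies no paradoxical decomposition), suppose $\mu$ is a left-invariant finitely additive probability measure on all subsets of $G$, and that $X_1, \ldots, X_m, Y_1, \ldots, Y_n$ together with $g_1, \ldots, g_m, h_1, \ldots, h_n$ witness a paradoxical decomposition. Left-invariance and finite additivity give
$$1 = \mu(G) = \mu\Bigl(\bigcup_i g_i X_i\Bigr) \leq \sum_i \mu(g_i X_i) = \sum_i \mu(X_i),$$
and similarly $\sum_j \mu(Y_j) \geq 1$. On the other hand, pairwise disjointness of all the $X_i$ and $Y_j$ forces $\sum_i \mu(X_i) + \sum_j \mu(Y_j) \leq \mu(G) = 1$, contradicting that each sum is already at least $1$.

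For the hard direction I would take the contrapositive and pass through a Hahn--Banach / separation formulation. Let $W \subseteq \ell^\infty(G, \mathbb{R})$ be the linear subspace spanned by $\{f - L_g f : f \in \ell^\infty(G),\ g \in G\}$, where $L_g f(x) = f(g^{-1}x)$. An invariant mean on $G$ is exactly a positive linear functional $\phi$ on $\ell^\infty(G)$ with $\phi(1) = 1$ and $\phi|_W = 0$, and a standard cone-separation argument shows that its existence is equivalent to the Dixmier condition
$$\sup_{x \in G} \sum_{k=1}^N \bigl(L_{g_k} f_k(x) - f_k(x)\bigr) \geq 0$$
for every finite choice of $f_1, \ldots, f_N \in \ell^\infty(G, \mathbb{R})$ and $g_1, \ldots, g_N \in G$. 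It therefore suffices to prove that failure of the Dixmier condition produces a paradoxical decomposition.

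The main obstacle is precisely this combinatorial step. By approximating each $f_k$ by rational-valued simple functions and clearing denominators, a violation of Dixmier reduces to a strict inequality of integer counts of the form $\sum_k (\chi_{g_k A_k}(x) - \chi_{A_k}(x)) \geq 1$ for all $x \in G$, asserting that the $g_k$-translates of the $A_k$ cover $G$ with strictly greater multiplicity than the $A_k$ themselves. An application of Hall's marriage theorem to the bipartite graph of incidences on either side of this inequality produces a system of distinct representatives that matches each incidence on the right to one of its over-coverings on the left; unwinding this matching partitions the $A_k$ into disjoint pieces whose $g_k$-translates doubly cover $G$, that is, a paradoxical decomposition. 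Once this combinatorial reduction is in place, the Hahn--Banach machinery supplies the invariant mean, and restriction to indicator functions gives the desired invariant finitely additive probability measure on $G$.
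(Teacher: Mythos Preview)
The paper does not prove this Fact at all: it is stated as ``the well-known theorem of Tarski'' and used as a black box, with no argument given. So there is nothing in the paper to compare your proof against.

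As for your sketch itself: the easy direction is fine and standard. For the hard direction, the Hahn--Banach/Dixmier reformulation of amenability is correct, and reducing a Dixmier violation to an integer multiplicity inequality for indicator functions is also a legitimate move. But the final paragraph is where all the content lives, and there you have essentially asserted the conclusion. Saying that Hall's marriage theorem applied to ``the bipartite graph of incidences'' yields, after ``unwinding,'' a partition whose translates doubly cover $G$ hides the entire combinatorial core of Tarski's theorem. You have not specified the two vertex sets of the bipartite graph, verified the Hall condition (which here requires the infinite version, i.e.\ K\H{o}nig's theorem, and some care about how the multiplicity inequality translates into the marriage hypothesis), or explained how the matching actually splits into two families of disjoint pieces with the required covering properties. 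The standard arguments here go either through the type semigroup and a cancellation lemma, or through a careful K\H{o}nig/Hall argument that is several pages long; your single sentence does not substitute for either. So the outline is defensible, but as written the hard direction is a promissory note rather than a proof.
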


\begin{Remark}\label{rem par dec partition of G} Clearly after replacing the $X_{i}, Y_{j}$ by suitable subsets, we can assume that each of the  $(g_{i}X_{i})_{i}$ and $(h_{j}Y_{j})_{j}$ form {\em partitions} of $G$.
\end{Remark}

One could ask whether  for a definable group $G$ (essentially a group equipped with a certain Boolean algebra of subsets, closed under left translation), we have the identical result: $G$ is definably amenable iff $G$ has a {\em definable} $cpd$, namely where the $X_{i}, Y_{j}$ are definable?  We expect the answer is no. In any case Tarski's proof  of ``nonamenability implies the  existence of a $cpd$" is nonconstructive and does not go over immediately to a definable version. 

In \cite{NIPI} there is another version of paradoxical decomposition which does give a characterization of definable amenability, remaining in the Boolean algebra of definable sets.

We will briefly describe this here.  We fix a definable group $G$ in a structure $M$. Definable will mean {\em with parameters}. 

By a \emph{($m$-)cycle} (for $m\geq 0$) we mean a formal sum 
$\sum_{i=1,\ldots,m} X_{i}$ of definable subsets $X_{i}$ of $G$.
If all the $X_{i}$ are the same we could write this formal sum as $mX_{i}$.  We can add such  cycles in the obvious way to get the ``free abelian monoid" generated by the definable subsets of $G$. 
And any definable subset $X$ of $G$ (including $G$ itself) is of course a (1-)cycle. 

If $X = \sum_{i=1,\ldots,m} X_{i}$ and $Y = \sum_{j=1,\ldots,n}Y_{j}$ are two cycles, then by a {\em definable piecewise} 
translation $f$ from $X$ to $Y$ we mean a map $f$ from the formal disjoint union $X_{1}\sqcup \ldots \sqcup X_{m}$ to  the formal disjoint union $Y_{1}\sqcup \ldots \sqcup Y_{n}$ for which there is a partition of each $X_{i}$ into definable subsets  $X_{i1}, \ldots, X_{in_{i}}$, and for each $i$ and $t\leq n_{i}$, an element $g_{it}$ of $G$ such that the restriction $f|X_{it}$ of $f$ to $X_{it}$ is just left translation by $g_{it}$, and $g_{it}X_{it}$ is a subset of one of the $Y_{j}$'s.  By a {\em definable} map from $X$ to $Y$ we mean just the same thing except that translation by $g_{it}$ on $X_{it}$ is replaced by a definable function with domain $X_{it}$ and image contained in some $Y_{j}$. 

Such a definable piecewise translation  (or definable map) $f$ is said to be {\em injective} if it is injective as a map between formal disjoint unions. So for example, in the case of definable piecewise translations this would mean that for each $i, i' \leq m$ and $t \leq n_{i}, t'\leq n_{i'}$ if $f$ takes both $X_{it}$ and $X_{i't'}$ into the same $Y_{j}$, then for $x\in X_{it}$ and $x'\in X_{i't'}$, $f(x) = f(x')$  implies that $i = i'$, $t = t'$ and $x=x'$. 


We write $X\leq Y$ if there is an injective piecewise definable translation $f$ from $X$ to $Y$.  Note that $\leq$ is reflexive and transitive. Also  $X\leq W$ and $Y\leq Z$ implies $X+Y \leq W + Z$. 

\begin{Definition} By a {\em definable paradoxical decomposition} ($dpd$) of the definable group $G$ we mean an injective  definable piecewise translation from $G + Y$ to $Y$ for some cycle $Y$. 
\end{Definition}

The following is proved in \cite{NIPI} (Proposition 5.4). 

\begin{Fact}\label{fact def amen iff no dpd}  $G$ is definably amenable if and only if $G$ does not have a $dpd$.
\end{Fact}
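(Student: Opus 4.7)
The plan is to treat the two directions separately. The easy direction uses additivity of any invariant measure; the hard direction is a Hahn--Banach / Tarski-style construction of an invariant mean from the absence of a $dpd$.

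For the direction ``definably amenable $\Rightarrow$ no $dpd$'': suppose $G$ carries a left-invariant Keisler measure $\mu$, and extend $\mu$ additively to cycles by $\mu(\sum_i X_i) = \sum_i \mu(X_i)$. Any injective definable piecewise translation from $X$ to $Y$ yields $\mu(X) \le \mu(Y)$: refine each $X_i$ into the pieces $X_{it}$ on which the translation is multiplication by a single $g_{it}$, use left-invariance to conclude $\mu(X_{it}) = \mu(g_{it} X_{it})$, and observe that by the injectivity hypothesis the translated pieces landing inside any fixed summand $Y_j$ are pairwise disjoint, so their total mass is at most $\mu(Y_j)$; summing over $j$ gives $\mu(X) \le \mu(Y)$. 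A $dpd$ from $G + Y$ into $Y$ would then produce $1 + \mu(Y) = \mu(G) + \mu(Y) = \mu(G+Y) \le \mu(Y)$, a contradiction.

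For the converse, suppose $G$ has no $dpd$. I plan to produce an invariant Keisler measure by cone separation in the real vector space $V$ freely generated by symbols $[X]$ for $X$ a definable subset of $G$. Let $C \subseteq V$ be the convex cone generated by the vectors $[Y] - [X]$ whenever $X \le Y$, by $[X \sqcup Y] - [X] - [Y]$ and its negative whenever $X, Y$ are disjoint definable subsets of $G$, and by $\pm[\emptyset]$. I claim $-[G] \notin C$; granting this, Hahn--Banach supplies a linear functional $\mu: V \to \mathbb{R}$ nonnegative on $C$ with $\mu([G]) > 0$. Rescaling so that $\mu([G]) = 1$, the relations forced into $C$ make $\mu$ finitely additive, zero on $\emptyset$, and monotone (hence $[0,1]$-valued on definable subsets of $G$). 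Left-invariance is automatic because for each $g \in G$ left translation is an injective definable piecewise translation in both directions between $X$ and $gX$, so both $[gX] - [X]$ and $[X] - [gX]$ lie in $C$.

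The main obstacle, as with Tarski's classical theorem, is verifying $-[G] \notin C$. If it failed, one would obtain a formal identity $-[G] = \sum_\alpha c_\alpha ([Y_\alpha] - [X_\alpha])$ modulo the additivity relations, with $c_\alpha > 0$ and $X_\alpha \le Y_\alpha$, and one needs to convert this into a genuine injective definable piecewise translation from $G + Y$ into $Y$ for some cycle $Y$. The standard trick is to clear denominators so every $c_\alpha$ is a positive integer (so $c_\alpha X_\alpha$ amounts to an $c_\alpha$-fold repetition in a cycle), absorb the additivity relations by passing to common definable refinements of all the pieces involved, and finally stitch the individual piecewise translations $X_\alpha \le Y_\alpha$ together into a single injective definable piecewise translation from $G + \sum_\alpha c_\alpha X_\alpha$ into $\sum_\alpha c_\alpha Y_\alpha$, which by the additivity identity coincides (as a cycle) with $G + Y$ and with $Y$ respectively, for $Y := \sum_\alpha c_\alpha X_\alpha$. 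This combinatorial bookkeeping, carried out entirely inside the Boolean algebra of definable subsets of $G$ so that all resulting maps remain definable piecewise translations, is the heart of the argument.
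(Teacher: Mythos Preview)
The paper does not prove this fact; it simply cites Proposition~5.4 of \cite{NIPI}. Your outline is the standard Hahn--Banach/cone-separation argument underlying that result (phrased in \cite{NIPI} via the type semigroup and the equivalent condition $(n+1)G\le nG$ of Corollary~4.6), and it is correct in substance.

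Three technical points deserve attention. First, ``clearing denominators'' presumes the $c_\alpha$ are rational; this needs a word of justification, namely that the generators of $C$ and the target $-[G]$ all have integer coordinates in the basis $\{[X]\}$, so membership of $-[G]$ in $C$ is feasibility of a linear program with rational data, hence witnessed rationally if at all. Second, to extract a separating functional from $-[G]\notin C$ you should note that $[G]$ is an order unit for $C$ (each $\pm[X]$ is dominated by $[G]$ in the cone order), so that the Minkowski functional $p(v)=\inf\{t\in\mathbb{R}:t[G]-v\in C\}$ is a well-defined sublinear functional with $p([G])=1$; algebraic Hahn--Banach then extends $t[G]\mapsto t$ to the desired $\mu$. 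Without the order-unit observation, mere non-membership $-[G]\notin C$ does not obviously yield strict separation in an infinite-dimensional space. Third, your final bookkeeping paragraph has the roles of $X_\alpha$ and $Y_\alpha$ slightly tangled: from $-[G]\in C$ one obtains $[G]+\sum_\alpha c_\alpha[Y_\alpha]=\sum_\alpha c_\alpha[X_\alpha]$ modulo the additivity relations, so after refining to atoms the cycle $G+Y$ with $Y:=\sum_\alpha c_\alpha Y_\alpha$ (not $\sum_\alpha c_\alpha X_\alpha$) coincides with $\sum_\alpha c_\alpha X_\alpha$, and the assembled injection $\sum_\alpha c_\alpha X_\alpha\to\sum_\alpha c_\alpha Y_\alpha$ then yields $G+Y\le Y$. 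With these adjustments your argument goes through.
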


\begin{Lemma}\label{lem par dec basics} Suppose $G + Y \leq Y$ where $Y = \sum_{i=1,\ldots,n}Y_{i}$ (with the $Y_{i}$ definable). Then:
\begin{enumerate}[(i)]
\item  $mG + Y \leq Y$ for all $m\geq 1$,
\item $2Y \leq Y$,
\item  $(n+1)G \leq nG$. 
\end{enumerate}
\end{Lemma}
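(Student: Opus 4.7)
The plan is to exploit the two auxiliary properties of $\leq$ explicit in the excerpt, namely transitivity and the monotonicity rule $X \leq W$, $Y' \leq Z \Rightarrow X + Y' \leq W + Z$, chaining them with the hypothesis $G + Y \leq Y$ and two easy comparisons between $Y$ and $nG$.

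For (1), I would proceed by induction on $m$. The base $m = 1$ is exactly the hypothesis. For the inductive step, I would rewrite $(m+1)G + Y = G + (mG + Y)$ and apply monotonicity to the inductive hypothesis $mG + Y \leq Y$ (and $G \leq G$) to get $(m+1)G + Y \leq G + Y$, then use the hypothesis once more to conclude $G + Y \leq Y$.

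Parts (2) and (3) both hinge on establishing two elementary comparisons between $Y$ and $nG$. First, $Y \leq nG$: since each $Y_i$ is a subset of $G$, sending $Y_i$ into the $i$th copy of $G$ by the identity (that is, translation by $e$) is an injective piecewise translation out of the formal sum $Y_1 \sqcup \dots \sqcup Y_n$. Second, $nG \leq Y$: by (1) with $m = n$, there is an injective piecewise translation $nG + Y \to Y$, and restricting it to the $nG$ summand yields an injective piecewise translation $nG \to Y$. Given these, (2) follows from the chain $2Y = Y + Y \leq nG + Y \leq Y$, where the first inequality uses $Y \leq nG$ and monotonicity, and the second is (1). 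And (3) follows from $(n+1)G = G + nG \leq G + Y \leq Y \leq nG$, where the first inequality uses $nG \leq Y$ and monotonicity, the second is the hypothesis, and the third is the trivial embedding of $Y$ into $nG$.

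The step most likely to be overlooked, and hence the main obstacle, is the observation that an injective definable piecewise translation out of a formal sum restricts on each summand to an injective definable piecewise translation; this is what converts the statement $nG + Y \leq Y$ from (1) into the usable inequality $nG \leq Y$, and it is precisely what closes the loop in (3).
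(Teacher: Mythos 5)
Your proof is correct and follows essentially the same route as the paper: the same induction for (1), and the same chain $Y+Y\le nG+Y\le Y$ for (2). The only small variation is in (3), where the paper goes via $(n+1)G\le (n+1)G+Y\le Y\le nG$ (inserting a dummy $Y$ summand), whereas you derive the auxiliary fact $nG\le Y$ by restricting the map from (1) to the $nG$ summand and then chain $(n+1)G=G+nG\le G+Y\le Y\le nG$; both are valid, and the paper's inequality $(n+1)G\le (n+1)G+Y$ is the simpler observation, avoiding the restriction step you flagged as the main subtlety.
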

\begin{proof}  
(i)  By induction: $G+ Y \leq Y$ implies $(m + 1) G + Y  = mG + G+Y \leq mG + Y \leq Y$. 

(ii)  $Y + Y \leq nG+Y \leq Y$ (by taking $n=m$ in part (i)). 

(iii)  $(n+1)G \leq (n+1)G +Y \leq Y $ (by (i)) $= \sum_{i=1,\ldots,n}Y_{i} \leq \sum_{i=1,\ldots,n}G = nG$. 
\end{proof}

\begin{Corollary}\label{cor dpd iff n+1<n} $G$ has a $dpd$ iff $(n+1)G\leq nG$ for some $n\geq 1$.
\end{Corollary}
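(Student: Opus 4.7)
The plan is to prove the two implications separately, using only the definition of a $dpd$ and Lemma~4.5.

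For the forward direction, suppose $G$ admits a $dpd$. By Definition~4.3 this means $G+Y\leq Y$ for some cycle $Y=\sum_{i=1}^{n}Y_{i}$ with $n\geq 1$. Then Lemma~4.5(3) applied to this cycle $Y$ gives immediately $(n+1)G\leq nG$.

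For the reverse direction, assume $(n+1)G\leq nG$ for some $n\geq 1$. Take $Y$ to be the cycle $\sum_{i=1}^{n}Y_{i}$ where each $Y_{i}=G$; that is, $Y=nG$. Then
\[
G+Y \;=\; G+nG \;=\; (n+1)G \;\leq\; nG \;=\; Y,
\]
so the injective piecewise definable translation witnessing $(n+1)G\leq nG$ is itself a witness that $G+Y\leq Y$, i.e.\ a $dpd$ of $G$ in the sense of Definition~4.3.

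I expect no real obstacle here: the forward direction is just a direct citation of Lemma~4.5(3), and the reverse direction is the trivial observation that the constant cycle $Y=nG$ turns the hypothesis $(n+1)G\leq nG$ into the required inequality $G+Y\leq Y$. The only point to be careful about is matching the formalism: one should note that $nG$ is by definition the formal sum with each summand equal to $G$, and that $G+nG$ coincides as a formal cycle with $(n+1)G$, so the injective piecewise translation witnessing $(n+1)G\leq nG$ is literally an injective piecewise translation from $G+Y$ to $Y$.
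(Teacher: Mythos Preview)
Your proof is correct and is exactly the argument the paper has in mind: the corollary is stated without proof immediately after Lemma~4.5, and your two directions---citing Lemma~4.5(3) for the forward implication and taking $Y=nG$ for the converse---are precisely how it is meant to follow.
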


On the other hand:
\begin{Lemma}\label{lem def cpd iff 2<1}  $G$ has a definable $cpd$ (a $cpd$ where the $X_{i}$ and $Y_{j}$ are definable) if and only if $2G\leq G$ (if and only if $(n+1)G \leq nG$ for all $n$). 
\end{Lemma}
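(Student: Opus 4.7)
The lemma asserts the equivalence of three statements: (a) $G$ admits a definable $cpd$, (b) $2G \le G$, and (c) $(n+1)G \le nG$ for all $n \ge 1$. The plan is to prove (a) $\Leftrightarrow$ (b) by unpacking definitions, and then observe that (b) $\Leftrightarrow$ (c) follows from simple cycle arithmetic already in use above (cf.\ Lemma 4.5).

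For (a) $\Rightarrow$ (b): take a definable $cpd$ with pairwise disjoint definable $X_i,Y_j \subseteq G$ and $g_i,h_j \in G$, and use Remark 4.2 to assume that both $\{g_iX_i\}_i$ and $\{h_jY_j\}_j$ are partitions of $G$. Define a piecewise definable translation $f$ from the cycle $G + G$ to the cycle $G$ as follows: on the first copy of $G$, partition it as $\bigsqcup_i g_iX_i$ and send $g_iX_i \to X_i$ via left translation by $g_i^{-1}$; on the second copy, partition it as $\bigsqcup_j h_jY_j$ and send $h_jY_j \to Y_j$ via $h_j^{-1}$. Since the target cycle is the single set $G$, injectivity amounts to asking that the images of all the pieces be pairwise disjoint subsets of $G$; but these images are exactly the $X_i$'s and $Y_j$'s, which are pairwise disjoint by hypothesis. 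Hence $f$ witnesses $2G \le G$.

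For (b) $\Rightarrow$ (a): read the same data backwards. An injective piecewise translation $f\colon G + G \to G$ is specified by a partition $G = \bigsqcup_i A_i$ of the first copy and $G = \bigsqcup_j B_j$ of the second into definable pieces, together with elements $a_i,b_j \in G$ so that $f$ is left translation by $a_i$ on $A_i$ and by $b_j$ on $B_j$. Injectivity of $f$ into the single target cycle $G$ forces the images $a_iA_i$ and $b_jB_j$ to be pairwise disjoint subsets of $G$ (across both copies). Setting $X_i := a_iA_i$, $Y_j := b_jB_j$, $g_i := a_i^{-1}$, $h_j := b_j^{-1}$ then produces pairwise disjoint definable subsets with $G = \bigcup g_iX_i = \bigcup h_jY_j$, which is a definable $cpd$. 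For the remaining equivalence, (c) $\Rightarrow$ (b) is immediate on taking $n = 1$, and (b) $\Rightarrow$ (c) follows from $(n+1)G = 2G + (n-1)G \le G + (n-1)G = nG$, using $2G \le G$ on the first two summands and the identity on the remaining $(n-1)G$ copies (together with the monotonicity $X \le W$, $Y \le Z$ $\Rightarrow$ $X+Y \le W+Z$ recorded before Definition 4.3).

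The main obstacle is conceptual rather than combinatorial: one must correctly interpret the injectivity condition in the degenerate case where the target cycle is a single set, since the official definition is phrased in terms of pieces in both the source and target. Once this is pinned down, the proof is a direct translation between the two presentations of the same data (a ``two-fold partition'' of $G$ by translates of disjoint definable sets vs.\ an injective piecewise translation $G+G \to G$).
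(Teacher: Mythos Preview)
Your proof is correct and follows essentially the same approach as the paper: both directions of (a) $\Leftrightarrow$ (b) are obtained by translating between the data of a definable $cpd$ (with the partitions normalized via Remark~4.2) and that of an injective definable piecewise translation $G+G\to G$, and your explicit handling of (b) $\Leftrightarrow$ (c) via the monotonicity of $\leq$ is exactly what the paper leaves implicit in its parenthetical.
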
 
\begin{proof} Suppose $G = \bigcup_{i=1,\ldots,m}g_{i}X_{i} = \bigcup_{j=1,\ldots,n}h_{i}Y_{j}$ witnesses a definable $cpd$. As mentioned in Remark \ref{rem par dec partition of G},  by replacing the $X_{i}$ and $Y_{j}$ by suitable subsets we can assume pairwise disjointness of the $g_{i}X_{i}$, as well as pairwise disjointness of the $h_{j}Y_{j}$, and we get  that $G + G \leq \bigcup_{i}X_{i} \cup \bigcup_{j}Y_{j} \leq G$.

The converse works the same way: if $G + G \leq G$, then we have two partitions of $G$, as $\bigcup_{i}X_{i}$ and $\bigcup_{j}Y_{j}$ as well as  $g_{i}, h_{j}\in G$, such that the sets  $g_{i}^{-1}X_{i}$, $h_{j}^{-1}Y_{j}$ are \emph{all} pairwise  disjoint.
\end{proof}

Hence the question of whether a non definably amenable group $G$ has a definable $cpd$ is the same as asking whether $2G \leq G$. (Of course when $G$ is equipped with predicates for all subsets then this has a positive answer, by Tarski's theorem.)
We expect it has a negative answer in general. 

\begin{Remark} Let $G$ be the definable group produced in Section 3 above. Then $4G \leq 3G$. 
\end{Remark}
\begin{proof}  We have (with notation as in Section 3), that $G = \bigcup _{j\in [3]} a(i,j)^{-1}C_{i}$ for each $i \in [4]$.
By cutting down each $C_{i}$ we may assume that for each $i$, the $a(i,j)^{-1}C_{i}$ are disjoint. (Of course the $C_{i}$'s remain disjoint although their union may no longer equal $G$.) 

Now we obtain an injective piecewise definable translation from $4G$ to $3G$ by taking $a(i,j)^{-1}C_{i}$ in the $ith$ copy of $G$ to  $C_{i}$ in the $jth$ copy of $G$, for $i\in [4]$, $j\in [3]$
\end{proof}

It is likely that the generic nature of the example from Section 3 implies that $n= 3$ is least such that $(n+1)G\leq nG$.

In the rest of this subsection we will explain how to modify the example so as to produce $2G\leq G$ also in an ambient $SU$-rank $1$ theory.  So this will be in a sense, a ``better" example,  with respect to the invariant ``least possible $n$" where $n$ is as in Corollary \ref{cor dpd iff n+1<n}. Thus, in this modified example there is a definable $cpd$ and we will see below that the ``definable Tarski number"  (the least sum $m+n$ that can appear in a definable $cpd$ of $G$) equals $6$ which is the least possible for non definably amenable groups definable in simple theories. 

We do a similar thing to Section \ref{sec 3}, but with $F_{6}$ in place of $F_{12}$ and six colours in place of four colours. We choose $a_{i}$ for $i=1,\ldots,6$ to be elements of $SL_{2}(\Z)$ which are free generators of a copy of $F_{6}$ inside $SL_{2}(\Z)$. 
For the universal theory $T$ in Section \ref{subs 3.1} we work in the language of rings with $6$ additional $4$-ary predicates 
$C_{1},\ldots,C_{6}$ and the axioms say  that  $R$ is an integral domain of characteristic $0$, that $C_{1},\ldots,C_{6}$ partition 
$SL_{2}(R)$, and $SL_{2}(R) = a_{1}^{-1}C_{1}\cup a_{2}^{-1}C_{2}\cup a_{3}^{-1}C_{3} = a_{4}^{-1}C_{4}\cup 
a_{5}^{-1}C_{5} \cup a_{6}^{-1}C_{6}$. 

Note that this will already give a definable $cpd$ of $G = SL_{2}(R)$ and so $2G\leq G$ by Lemma \ref{lem def cpd iff 2<1}.

We have to construct again the model companion $T^{*}$ of $T$ and show it to be simple (of $SU$-rank $1$). 

The main thing is to modify the combinatorial lemmas in Section \ref{subs 3.2}. 
 So now we have  a free action of $F_{6}$ on a set $X$, and for $X_{0}\subseteq X$, by a good colouring $c:X_{0} \to [6]$ we mean that for all $x\in X_{0}$ 
\begin{enumerate}[(i)]
\item if $a_{i}\cdot x\in X_{0}$ for all $i=1,2,3$ then $c(a_{i}\cdot x) = i$ for some $i=1,2,3$, and 
\item if $a_{j}\cdot x \in X_{0}$ for all  $j=4,5,6$ then $c(a_{j}\cdot x) = j$ for some $j=4,5,6$.
\end{enumerate}

Again we have the notions of distance, connectedness etc., with respect to the relevant Cayley graph on $X$.

\begin{uLemma}{3.2'}\label{lemma 3.2prim} 
Suppose $X_{0}\subseteq X$ is connected. Then any good colouring $c_{0}: X_{0}\to [6]$ extends to a total good colouring.
\end{uLemma} 
\begin{proof}
(By induction on $n$.) Suppose we have extended the good colouring $c$ of $X_{0}$ to a good colouring $c_{n}$ of $B_{n}(X_{0})$.
Suppose $i\in [6]$, and $y = a_{i}\cdot x$ is in $B_{n+1}(X_{0})\setminus B_{n}(X_{0})$ for some $x\in B_{n}(X_{0})$, then define $c_{n+1}(y) = i$. This is well-defined by uniqueness of paths.  And if $y\in B_{n+1}(X_{0})\setminus B_{n}(X_{0})$ is not of this form, define $c_{n+1}(y)$ arbitrarily. 

Again we have to check that $c_{n+1}$ is a good colouring of $B_{n+1}(X_{0})$.  Suppose $x\in B_{n+1}(X_{0})$ and  
$a_{i}\cdot x\in B_{n+1}(X_{0})$ for all $i=1,2,3$.  If $a_{i}\cdot x \in B_{n}(X_{0})$ for all $i=1,2,3$, then  
connectedness of $B_{n}(X_{0})$ implies that also $x\in B_{n}(X_{0})$. So as $c_{n}$ is a good colouring of $B_{n}
(X_{0})$, and $c_{n+1}$ extends $c_{n}$, Axiom (i) is satisfied at $x$. Otherwise $a_{i}\cdot x \in B_{n+1}(X_{0})
\setminus B_{n}(X_{0})$ for some $i=1,2,3$ and so $x\in B_{n}(X_{0})$, hence  $c_{n+1}(a_{i}\cdot x) = i$.  

Exactly the same holds for $x\in B_{n+1}(X_{0})$ for which $a_{i}\cdot x\in B_{n+1}(X_{0})$ for all $i = 4,5,6$.

So, as in Lemma \ref{lem extending color 1 in sec 3}, we have extended the good colouring of $X_{0}$ to a good colouring of $X$. 
\end{proof}

\begin{uLemma}{3.3'}\label{lemma 3.3prim}
Suppose $C_{0}$ and $C_{1}$ are disjoint connected subsets of $X$ with $3 \leq d(C_{0}, C_{1}) < \infty$. Let $C$ be the smallest connected subset of $X$ containing $C_{0}\cup C_{1}$. Then any good colouring $c_{0}: C_{0}\cup C_{1} \to [6]$ extends to a good colouring of $C$.
\end{uLemma}
\begin{proof}
We have $C_{0}$, $C_{1}$ connected subsets of $X$ with $3\leq d(C_{0}, C_{1}) < \infty$ and $C$ is the smallest connected set containing $C_{0}\cup C_{1}$.  And we want to extend a good colouring $c_{0}$ of $C_{0}\cup C_{1}$ to a good colouring $c$ of $C$.  As in Lemma \ref{lem extending color 2 in sec 3} we reduce to the case of  a path $(u,v,y,z)$ from $u\in C_{0}$ to $z\in C_{1}$ where $v,y\notin C_{0}\cup C_{1}$.  If $v = a_{i}\cdot u$ for some $i\in [6]$, put $c(v) = i$, and define it arbitrarily otherwise. Likewise if $y = a_{i}\cdot z$ for some $i\in [6]$ put $c(y) = i$, and define it arbitrarily otherwise.  Again we check that $c$ is well-defined and that the good colouring axioms are satisfied. 
\end{proof}

Lemmas \ref{lem extending color 3 in sec 3} and \ref{lem extending color 4 in sec 3} adapt (formally) word for word to the new context.  As well as the definition of the model companion $T^{*}$ in Section \ref{subs 3.3} and the simplicity of (all completions of) $T^{*}$ in Section \ref{subs 3.4}.

So the conclusion is:
\begin{Proposition} There is a definable group $G$ in a model of a simple theory such that non definable amenability of $G$ is witnessed by a definable $cpd$, equivalently such that $2G\leq G$. 
\end{Proposition}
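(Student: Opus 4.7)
The plan is to imitate the construction of Section 3 almost verbatim, with the two structural changes already flagged before the statement: use the free group $F_{6}$ (freely generated by $a_{1},\ldots,a_{6}\in SL_{2}(\Z)$) in place of $F_{12}$, and partition $SL_{2}(R)$ into six definable pieces $C_{1},\ldots,C_{6}$ instead of four. The universal theory $T$ will assert that $R$ is a characteristic zero integral domain, that $C_{1},\ldots,C_{6}$ partition $SL_{2}(R)$, and that $SL_{2}(R)=a_{1}^{-1}C_{1}\cup a_{2}^{-1}C_{2}\cup a_{3}^{-1}C_{3}=a_{4}^{-1}C_{4}\cup a_{5}^{-1}C_{5}\cup a_{6}^{-1}C_{6}$. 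From this pair of coverings, after shrinking each $C_{i}$ so that within each of the two triples the three translates $a_{i}^{-1}C_{i}$ are pairwise disjoint, one reads off a definable $cpd$ of $G:=SL_{2}(K)$ exhibiting $2G\leq G$ via an injective piecewise translation from $G+G$ into $\sum_{i=1}^{6}C_{i}\leq G$. So the combinatorial/algebraic content of the paradoxical decomposition is visible immediately from the axioms of $T$; everything else is producing an ambient simple $SU$-rank $1$ theory in which $G$ lives.

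Next I would set up the combinatorial framework of Section 3.2 with the new notion of \emph{good colouring} $c:X_{0}\to[6]$ on a subset $X_{0}$ of an $F_{6}$-set $X$: for every $x\in X_{0}$, if $a_{i}\cdot x\in X_{0}$ for all $i\in\{1,2,3\}$ then $c(a_{i}\cdot x)=i$ for some such $i$, and the analogous condition for $\{4,5,6\}$. The heart of the argument is verifying that the extension lemmas (the analogues of Lemmas 3.2, 3.3, 3.4, 3.5) still go through. For the connected case (Lemma 3.2), one induct on balls $B_{n}(X_{0})$, colouring any newly reached $y=a_{i}\cdot x$ with $i$; well-definedness is unique-path, and the two good-colouring clauses are checked separately exactly as in the four-colour case. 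For two components at distance $\geq 3$ (Lemma 3.3), reduce to a short path $(u,v,y,z)$ and colour $v,y$ according to the generators that produced them; the clauses at the four path-points are then automatic. Lemmas 3.4 and 3.5 are purely structural inductions on the number of connected components and distances between them, and their proofs carry over word for word.

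With the combinatorial lemmas in hand, the construction of the model companion $T^{*}$ proceeds as in Section 3.3: define \emph{good curves} $C\subseteq SL_{2}^{n}$ over an algebraically closed $K$ and the safety predicate for a target colouring $c_{0}:[n]\to[6]$, and axiomatise $T^{*}$ by $T$ together with the schema asserting that every good $K$-curve safe for $c_{0}$ has infinitely many $K$-points on which the colouring agrees with $c_{0}$. The analogues of Lemmas 3.12, 3.13, Corollary 3.14 and Proposition 3.15 then go through unchanged: existence of colouring extensions gives $(T^{*})_{\forall}=T$, an $\aleph_{1}$-saturated back-and-forth over algebraically closed substructures gives relative quantifier elimination, and the Independence Theorem over a model is verified by taking a common $ACF$-amalgam of the two colourings (well-defined because $D$ is $ACF$-independent from $AB$ over each of $A,B$, so no element of $SL_{2}$ sits in the overlap), extending to a good colouring of $SL_{2}(F)$ via Lemma 3.2, and then embedding back into the monster by saturation.

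The main obstacle, such as it is, is purely bookkeeping: one must check that the six-colour version of good colouring behaves identically with respect to the key finite-extension Lemma 3.5 so that ``safety over $B_{\alpha(n)}$'' is still a first-order property of $(K,\bar a)$ and so that amalgams of good colourings along disjoint connected regions remain good. Once these are checked, the conclusion is immediate: $T^{*}$ is simple of $SU$-rank $1$ by the analogue of Proposition 3.15, the group $G=SL_{2}(K)$ is definable in any model of (any completion of) $T^{*}$, and the axioms of $T$ directly furnish the definable paradoxical decomposition witnessing $2G\leq G$, so by Lemma 4.7 the definable Tarski number of $G$ is at most $6$. Since $G$ is non definably amenable (any left-invariant $\mu$ would give $\mu(G)\geq 2\mu(G)$), this proves the proposition.
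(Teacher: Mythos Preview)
Your proposal is correct and follows essentially the same approach as the paper: replace $F_{12}$ by $F_{6}$ and four colours by six, verify that the new versions of Lemmas 3.2--3.5 go through (the paper spells out the new 3.2 and 3.3 and notes 3.4, 3.5 adapt verbatim), and then observe that the construction of $T^{*}$ and the simplicity proof in Sections 3.3--3.4 carry over unchanged. The only cosmetic difference is that the paper invokes Lemma 4.7 directly for $2G\leq G$ rather than describing the shrinking of the $C_{i}$ explicitly as you do, but the content is identical.
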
 

A final remark in this section concerns the numbers $m,n$ witnessing a definable $cpd$, namely the existence of pairwise disjoint definable subsets $X_{1},\ldots,X_{m}$, $Y_{1},\ldots,Y_{n}$ of $G$ and $g_{1},\ldots,g_{m}, h_{1},\ldots,h_{n}\in G$ such that 
$G = \cup g_{i}X_{i} = \cup h_{j}Y_{j}$. Following classical terminology, for a definable group $G$ which is not definably amenable, a least possible value of $m+n$  that occurs in a definable $cpd$ of $G$ can be called  the {\em definable Tarski number} of $G$. (And if $G$ has {\em no} definable $cpd$ we will say that its definable Tarski number is $\infty$.)

\begin{Proposition}  Suppose $G$ is a definable group in a structure $M$ and $G$ has a definable $cpd$ with attached numbers $m, n$. If either $m=2$ or $n=2$ then $Th(M)$ has the strict order property. In particular, the definable Tarski number of a non definably amenable group definable in a simple theory is at least $6$. 
\end{Proposition}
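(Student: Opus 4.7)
The plan is to show that $m=2$ (and symmetrically $n=2$) forces $\text{SOP}$ for $Th(M)$; the lower bound on the definable Tarski number then follows at once because simple theories are $\text{NSOP}$.

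Assume $m=2$. First apply Remark 4.2 to reduce to the case where $g_1X_1 \sqcup g_2X_2 = G$ is a partition; pairwise disjointness among the $X_i$ and $Y_j$ is preserved since we only pass to subsets. Set $A := g_1X_1$ and $h := g_2g_1^{-1} \in G$. Then $X_1 = g_1^{-1}A$ and $X_2 = g_2^{-1}(G \setminus A)$, and a direct computation shows that $X_1 \cap X_2 = \emptyset$ is equivalent to $hA \subseteq A$, while $X_1 \cup X_2 = G$ is equivalent to $hA = A$.

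The crux is strictness: $hA \subsetneq A$. If instead $hA = A$, then $X_1 \sqcup X_2 = G$, forcing every $Y_j$ (being disjoint from both $X_1$ and $X_2$) to be empty; this contradicts the fact that $(h_jY_j)_{j=1}^{n}$ partitions $G$. Iterating $h$ yields an infinite strictly descending chain $A \supsetneq hA \supsetneq h^2A \supsetneq \cdots$ of uniformly definable subsets of $G$. Concretely, if $\phi(x,\bar c)$ defines $A$, then $\psi(x;z) := \phi(z^{-1}x, \bar c)$ with parameters $z = h^i$ exhibits this chain, so $\psi$ together with the sequence $(h^i)_{i<\omega}$ witnesses $\text{SOP}$ in $Th(M)$. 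The case $n=2$ is entirely symmetric.

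For the final assertion: any cpd already satisfies $m, n \geq 2$ (if $m=1$ then $X_1 = G$, leaving no room for the $Y_j$); combined with the $\text{SOP}$ result and the fact that simple theories are $\text{NSOP}$, every definable cpd in a simple theory has $m, n \geq 3$, so $m+n \geq 6$. The main obstacle is the strict inclusion $hA \subsetneq A$, which is the one place where the $Y$-side of the cpd is essential — everything else (the partition reduction, the group-theoretic rewriting, and the identification of the chain as an $\text{SOP}$-witness) is bookkeeping.
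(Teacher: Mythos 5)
Your proposal is correct and follows essentially the same approach as the paper: in both arguments one normalizes so that the $m=2$ side of the cpd is expressed via translation by a single element (your $h = g_2 g_1^{-1}$, the paper's $g$), uses disjointness of $X_1$ and $X_2$ to get a containment between a definable set and its $h$-translate, derives strictness from the nonemptiness of the $Y$-part (guaranteed by the $Y_j$ covering $G$), and iterates to obtain a strictly monotone chain witnessing SOP. The only cosmetic differences are that you pass through Remark 4.2 to make $g_1X_1 \sqcup g_2X_2$ an actual partition and phrase the chain as a descending sequence $A \supsetneq hA \supsetneq h^2A \supsetneq \cdots$ with $A = g_1X_1$, whereas the paper keeps the union $X_1 \cup gX_2 = G$ unnormalized and builds the increasing chain $X_2 \subsetneq gX_2 \subsetneq \cdots$; these are equivalent (indeed complementary) bookkeeping choices. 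Your explicit observation that $m,n \geq 2$ always holds, which the paper leaves implicit, is a welcome addition to the ``in particular'' clause.
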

\begin{proof} So we assume that $G$ is the disjoint union of nonempty $X_{1}, X_{2}$, and $Y$ and that $G = g_{1}X_{1}\cup g_{2}X_{2}$.  Then $G$ is also the disjoint union of $g_{1}X_{1}$, $g_{1}X_{2}$ and $g_{1}Y$.
Replacing $X_{1}, X_{2}, Y$ by their $g_{1}$-translates, and changing notation there is $g\in G$ such that $X_{1}\cup gX_{2} = G$.  So $X_{2}$ is a proper subset of $gX_{2}$. Iterating we have a strictly increasing sequence $X_{2}\subset gX_{2}\subset g^{2}X_{2} \subset g^{3}X_{2} \subset \ldots$, yielding the strict order property. 
\end{proof} 

Thus in the modified example above, the definable Tarski number is $6$ (as there is a definable $cpd$ with $n = m = 3$). So in terms of definable  Tarski number, this is the simplest possible example of a non definably amenable group definable in a simple theory. 

\subsection{Small theories}
The aim here is to give some positive results concerning definable amenability for groups definable in (models of) small theories $T$, as well as some related results around amenability of small theories. 

Recall that a complete countable theory $T$ is said to be {\em small} if for all $n \in \mathbb{N}_{\geq 1}$ the type space $S_{n}(T)$ is countable. This is equivalent to saying that for any model $M$ of $T$ and finite subset $A$ of $M$, the type space $S_{1}(A)$ is countable.

We could prove the definable amenability of definable groups in small theories directly from Fact \ref{fact def amen iff no dpd}. But we can slightly generalize the set-up so as to obtain other corollaries. 

Our general context consists of a group $G$ acting on a set $S$ and where we are given a Boolean algebra $\mathcal B$ of subsets of $S$ which is closed under the action of $G$ (in particular $\emptyset$ and $S$ are elements of ${\mathcal B}$).  We will call ${\mathcal B}$ a {\em $G$-invariant Boolean algebra} of subsets of $S$. 

Replacing ``definable" by ``in $\mathcal B$", we can copy the notions of ($m$-)cycles and  ${\mathcal B}$-piecewise translations from Section \ref{subs 4.1} to the present context. 
We can also introduce the notion of \emph{${\mathcal B}$-map} $f$ from a cycle $\sum_{i} X_{i}$ to a cycle  $\sum_{j}Y_{j}$. This will be a map from the formal disjoint union $\bigsqcup_{i}X_{i}$ to the formal disjoint union $\bigsqcup_{j}Y_{j}$ such that for every $i$ and $B\in {\mathcal B}$ with $B\subseteq X_{i}$, and for every $j$, $f(B)\cap Y_{j}\in {\mathcal B}$. Note that this makes sense without any $G$-action.  Note also that any ${\mathcal B}$-piecewise translation is a ${\mathcal B}$-map, and that injectivity makes sense for ${\mathcal B}$-maps. 

Observe that both the class of $\mathcal B$-piecewise translations and the class of $\mathcal B$-maps are closed under composition. As before we write $X\leq Y$ if there is an injective $\mathcal B$-piecewise translation from $X$ to $Y$. 

By a {\em ${\mathcal B}$-paradoxical decomposition}  (${\mathcal B}pd$) we mean an injective ${\mathcal B}$-piecewise translation from $S + Y$ to $Y$ for some cycle $Y$.
  Also we say that the $G$-set $S$ is {\em ${\mathcal B}$-amenable}  if there is a $G$-invariant finitely additive probability measure on ${\mathcal B}$.  The proof of Fact \ref{fact def amen iff no dpd} in \cite{NIPI} adapts to yield:
\begin{Proposition}\label{prop Bamen iff no Bpar dec} The $G$-set $S$ is ${\mathcal B}$-amenable if and only if $S$ has no ${\mathcal B}$-paradoxical decomposition. 
\end{Proposition}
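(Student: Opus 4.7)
For the easy direction, suppose $\mu$ is a $G$-invariant finitely additive probability measure on $\mathcal{B}$ and extend it to cycles by $\mu\bigl(\sum_i X_i\bigr) := \sum_i \mu(X_i)$. If $f : S + Y \to Y$ is an injective $\mathcal{B}$-piecewise translation witnessing a $\mathcal{B}pd$, then the defining partitions of the summands of $S + Y$ into $\mathcal{B}$-pieces, together with injectivity of $f$, $G$-invariance of $\mu$, and finite additivity, yield $\mu(S) + \mu(Y) = \mu(S+Y) \leq \mu(Y)$. Since $Y$ is a finite formal sum of elements of $\mathcal{B}$, each bounded by $\mu(S) = 1$, $\mu(Y)$ is finite; cancelling forces $\mu(S) \leq 0$, contradicting $\mu(S) = 1$.

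For the nontrivial direction I would follow the strategy of Proposition 5.4 of \cite{NIPI}, reducing to Tarski's theorem on commutative preordered monoids. A direct inspection shows that the proofs of Lemma 4.5 and Corollary 4.6 go through with $S$ in place of $G$ and ``$\mathcal{B}$-piecewise'' in place of ``definable piecewise''; only closure of $\mathcal{B}$ under the $G$-action and finiteness of the partitions is ever used. Consequently, absence of a $\mathcal{B}pd$ is equivalent to $(n+1) S \not\leq n S$ for every $n \geq 1$. Form the commutative monoid $\mathcal{T}$ of cycles modulo the equivalence $X \equiv Y$ iff $X \leq Y$ and $Y \leq X$, with addition induced by formal sum; the relation $\leq$ descends to a preorder on $\mathcal{T}$ compatible with $+$.

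Now apply Tarski's theorem: for any commutative preordered monoid $(M,+,\leq)$ and $x \in M$ with $(n+1)x \not\leq nx$ for all $n \geq 1$, there is an order-preserving monoid homomorphism $\nu : M \to [0,\infty]$ with $\nu(x)=1$. Taking $M = \mathcal{T}$ and $x = [S]$ produces such a $\nu$, and I would define $\mu : \mathcal{B} \to [0,1]$ by $\mu(B) := \nu([B])$. This is well defined and at most $1$ because the inclusion $B \subseteq S$ is itself an injective $\mathcal{B}$-piecewise translation, giving $[B] \leq [S]$. Finite additivity follows from $[B_1 \sqcup B_2] = [B_1] + [B_2]$ in $\mathcal{T}$, witnessed by identity-on-pieces $\mathcal{B}$-piecewise translations in both directions between the set-theoretic union and the formal sum. $G$-invariance follows from $[gB] = [B]$, witnessed by translation by $g$ and by $g^{-1}$ (both of which are $\mathcal{B}$-piecewise since $\mathcal{B}$ is $G$-invariant).

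The main obstacle is purely the invocation of Tarski's monoid theorem in the precise form required; this is classical but one must select a formulation that furnishes an order-preserving additive map normalized at a prescribed element, rather than merely some state. A self-contained compactness/Hahn--Banach argument on the compact convex space of $[0,1]$-valued, finitely additive, $G$-invariant functionals on $\mathcal{B}$ normalized at $S$ would serve equally well. Beyond this, the argument is entirely formal and faithfully mirrors the definable-group case.
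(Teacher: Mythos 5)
Your proof is correct, and, as far as one can compare --- the paper gives no argument of its own beyond noting that the proof of Fact~4.4 from \cite{NIPI} adapts --- it follows essentially the same classical route through Tarski's theorem on commutative monoids. The easy direction is fine, as is the reduction via the generalizations of Lemma~4.5 and Corollary~4.6 (the paper explicitly records that these go through for $\mathcal{B}$-piecewise translations).

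On the point you flag as the main obstacle: to land squarely on the standard formulation of Tarski's theorem, you should verify that the preorder you put on $\mathcal{T}$ is in fact the \emph{algebraic} preorder, i.e.\ $[X] \leq [Y]$ iff there exists $[Z]$ with $[X] + [Z] = [Y]$. The nontrivial inclusion is this: if $f : X \to Y$ is an injective $\mathcal{B}$-piecewise translation, then each translated piece $g_{it}X_{it}$ lies in $\mathcal{B}$ by $G$-invariance, so the image cycle $f(X)$ and the complementary cycle $Y \setminus f(X)$ are both cycles over $\mathcal{B}$; hence $[Y] = [f(X)] + [Y \setminus f(X)] = [X] + [Y \setminus f(X)]$, giving $[X] \leq_{\mathrm{alg}} [Y]$. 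Once $\leq$ is identified as the algebraic preorder, any additive $[0,\infty]$-valued monoid homomorphism is automatically monotone, so you do not need a special ``order-preserving'' variant of Tarski's theorem, only the classical one: for a commutative monoid $M$ and $e \in M$, there is an additive $\nu : M \to [0,\infty]$ with $\nu(e)=1$ iff $(n+1)e \not\leq_{\mathrm{alg}} ne$ for all $n \geq 1$. With that observation your argument is complete and faithfully reproduces what the cited proposition of \cite{NIPI} does in the definable case.
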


Lemma \ref{lem par dec basics} and Corollary \ref{cor dpd iff n+1<n} remain valid in the more general context of $\mathcal B$-piecewise translations. We use this generalization of Lemma \ref{lem par dec basics}(ii) in the proof of Proposition \ref{prop small boolean algebra} below. 

We will call a Boolean algebra ${\mathcal B}$ {\em small} if  its Stone space is countable, in other words there are only countably many ultrafilters on $\mathcal B$. 

Let us introduce some notation for cycles which will be used in a proof below.  The context here and in the next lemma is simply a Boolean algebra ${\mathcal B}$ of subsets of a set $S$.
Let $X = \sum_{i=1,\ldots,n}X_{i}$ and $Z = \sum_{i=1,\ldots,n}Z_{i}$ be cycles   (so $n$ is the same in both).
We also fix the ordering of the $X_{i}$ and $Z_{i}$.

\begin{enumerate}[(1)]
\item  $X\sqsubseteq Z$ means that $X_{i}\subseteq Z_{i}$ for each $i$,
\item  $X\cap Z = \emptyset$ means that $X_{i}\cap Z_{i} = \emptyset$ for each $i$, and
\item  $X\neq \emptyset$ means that some $X_{i}$ is nonempty.
\end{enumerate}

\noindent
If moreover $f$ is a ${\mathcal B}$-map from $X$ to $Z$, then by the {\em image}  $f(X)$ of $X$ under $f$ we mean the cycle  $\sum_{i}W_{i}$ where $W_{i}$ is  $f(X_{1}\sqcup \ldots\sqcup X_{n})\cap Z_{i}$ which we note is in $\mathcal B$.

\begin{Lemma}\label{lem tree of sets}  Suppose that $Y$ is a nonempty cycle and $f_{0}$, $f_{1}$ are injective $\mathcal B$-maps from $Y$ to $Y$ such that $f_{0}(Y) \cap f_{1}(Y) = \emptyset$. Then ${\mathcal B}$ is not small. 
\end{Lemma}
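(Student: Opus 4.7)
The plan is to build a Cantor scheme of nonempty $\mathcal{B}$-sets inside $Y$ and then extract an uncountable family of pairwise distinct ultrafilters of $\mathcal{B}$. Write $Y = \sum_{i=1}^{n} Y_i$, and for each $s = s_1 \cdots s_m \in 2^{<\omega}$ define a subcycle $Y_s \sqsubseteq Y$ by the recursion $Y_{\epsilon} = Y$ and $Y_{bs'} = f_b(Y_{s'})$; unrolling, $Y_s = f_{s_1}(f_{s_2}(\cdots f_{s_m}(Y)))$. Since $f_0, f_1$ are $\mathcal{B}$-maps and the class of $\mathcal{B}$-maps is closed under composition, each component $Y_{s,i}$ lies in $\mathcal{B}$. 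The three combinatorial facts needed are: (1) $Y_s$ is nonempty for every $s$, because injectivity of the $f_b$ propagates nonemptiness; (2) nesting by prefix, $Y_{s \cdot t} \sqsubseteq Y_s$, by an induction on $|s|$ using $Y_t \sqsubseteq Y$ and that images preserve inclusion; and (3) componentwise disjointness $Y_s \cap Y_t = \emptyset$ whenever $s \neq t$ have the same length. For (3), let $u$ be the longest common prefix and, up to swap, write $s = u \cdot 0 \cdot s^*$, $t = u \cdot 1 \cdot t^*$; then $f_0(Y_{s^*}) \sqsubseteq f_0(Y)$ and $f_1(Y_{t^*}) \sqsubseteq f_1(Y)$ are disjoint by hypothesis, and the injective outer applications $f_{u_{|u|}}, \ldots, f_{u_1}$ preserve that disjointness.

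To pin down a persistent component, for each $\sigma \in 2^\omega$ let $J_m(\sigma) = \{i : Y_{\sigma|_m, i} \neq \emptyset\}$. The nesting property forces $J_m(\sigma) \supseteq J_{m+1}(\sigma)$, and each $J_m(\sigma)$ is a nonempty subset of the finite set $\{1, \ldots, n\}$, so $I(\sigma) := \bigcap_m J_m(\sigma)$ is nonempty. Partitioning $2^\omega$ according to, say, $\min I(\sigma)$, one of the finitely many classes must be uncountable: there exist $i_0 \in \{1, \ldots, n\}$ and an uncountable $A \subseteq 2^\omega$ such that $Y_{\sigma|_m, i_0} \neq \emptyset$ for all $\sigma \in A$ and all $m \geq 1$.

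For each $\sigma \in A$ the family $\{Y_{\sigma|_m, i_0} : m \geq 1\}$ is a descending chain of nonempty members of $\mathcal{B}$, hence has the finite intersection property, and extends to an ultrafilter $\mathcal{U}_\sigma$ on $\mathcal{B}$. Given $\sigma \neq \sigma'$ in $A$, choose $m$ with $\sigma|_m \neq \sigma'|_m$; by (3), $Y_{\sigma|_m, i_0} \cap Y_{\sigma'|_m, i_0} = \emptyset$, while $Y_{\sigma|_m, i_0} \in \mathcal{U}_\sigma$ and $Y_{\sigma'|_m, i_0} \in \mathcal{U}_{\sigma'}$, so $\mathcal{U}_\sigma \neq \mathcal{U}_{\sigma'}$. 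Hence the Stone space of $\mathcal{B}$ contains an uncountable family of distinct points, i.e., $\mathcal{B}$ is not small.

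The main subtlety is the orientation of the iteration. The natural "append" convention $Y_{sb} = f_b(Y_s)$ yields disjointness at each level but produces nesting indexed by \emph{suffixes}, so branches of $2^\omega$ do not give descending chains. The "prepend" convention $Y_{bs} = f_b(Y_s)$ used above applies the outermost $f_b$ last, which simultaneously produces descending chains along every branch of the tree and preserves the disjointness across the tree; after that, the pigeonhole on the finitely many components of $Y$ finishes the argument.
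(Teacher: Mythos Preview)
Your proof is correct. The Cantor-scheme construction with the ``prepend'' orientation is exactly what the paper does (its $f_\eta = f_{\eta(0)}\circ\cdots\circ f_{\eta(k-1)}$ unrolls to the same composite), and your verification of nonemptiness, nesting by prefix, and level-wise disjointness is sound.

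The one genuine difference is in the treatment of the case $n>1$. The paper argues by induction on the length $n$ of the cycle: it looks at the $n$th component $Y^\eta_n$ of the tree, and if these are all nonempty it finishes as in the $n=1$ case; otherwise it finds some $\eta$ with $Y^\eta_n=\emptyset$, passes to the subtree below $\eta$, drops the last coordinate, and invokes the inductive hypothesis for $(n-1)$-cycles. You instead avoid induction entirely by running a pigeonhole along each branch $\sigma$: the sets $J_m(\sigma)$ of nonempty coordinates are decreasing subsets of $[n]$, hence stabilize, and partitioning $2^\omega$ by the limit gives an uncountable family sharing a single persistent coordinate $i_0$. Your route is arguably a bit cleaner (one pass, no induction), at the cost of needing the stronger level-wise disjointness (3) rather than just sibling disjointness $Y^{\eta 0}\cap Y^{\eta 1}=\emptyset$; but as you verified, (3) follows immediately from the hypothesis and injectivity of the outer maps.
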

\begin{proof} The proof goes by induction on the length of the cycle $Y$.  First suppose that $Y$ is a $1$-cycle, i.e. $Y$ is in ${\mathcal B}$.
For $\eta \in 2^{<\omega}$, let $f_{\eta}: Y \to Y$ be  given  by:  $f_{\emptyset}$ is the identity, and $f_{\eta} = f_{\eta(0)}\circ f_{\eta(1)} \circ \ldots \circ f_{\eta(k-1)}$  when 
$dom(\eta) = \{0,\ldots,k-1\}$ with $k > 0$.  And let $Y^{\eta} = f_{\eta}(Y)$.
Then the $Y^{\eta}$ are nonempty subsets of $Y$ which are in ${\mathcal B}$ (as the $f_{i}$ are ${\mathcal B}$ maps), $Y^{\eta} \supset Y^{\tau}$ when $\tau$ extends $\eta$, and  $Y^{\eta 0} \cap Y^{\eta 1} = \emptyset$ for all $\eta$. 
For $\eta\in 2^{\omega}$  let $\Sigma_{\eta} = \{Y^{\eta|n}:n < \omega\}$. Then each $\Sigma_{\eta}$ extends to an ultrafilter $p_{\eta}$ on ${\mathcal B}$ and $p_{\eta} \neq p_{\tau}$ for $\eta \neq \tau \in 2^{\omega}$.  

When $Y$ is an $n$-cycle $\sum_{i=1,\ldots,n}Y_{i}$ for $n >1$  it is a bit more complicated.  With the notation introduced above,  define the $f_{\eta}:Y\to Y$ in  the same way for $\eta\in 2^{< \omega}$, and define  $Y^{\eta} = f_{\eta}(Y)$,  and now define $Y^{\eta}_{i}$ to be $f_{\eta}(Y_{1}\sqcup \ldots \sqcup Y_{n}) \cap Y_{i}$. 

Again the sets $Y^{\eta}_{i}$ are in ${\mathcal B}$ and we have, for all $\eta\in 2^{<\omega}$:
\begin{enumerate}[(1)]
\item  $Y^{\eta} = \sum_{i=1,\ldots,n}Y^{\eta}_{i}$,
\item $Y^{\eta} \neq \emptyset$,
\item  $Y^{\eta}\sqsubseteq Y^{\tau}$ whenever $\eta$ extends $\tau$, and
\item  $Y^{\eta 0} \cap Y^{\eta 1} = \emptyset$. 
\end{enumerate}

 Note that in particular the $Y^{\eta}_{n}$ satisfy both (3) and (4). If they also satisfy  (2) then we get continuum many ultrafilters on ${\mathcal B}$ as in the $n=1$ case. Otherwise there is $\eta$ 
such that $Y^{\eta}_{n} = \emptyset$ and therefore so is $Y^{\eta'}_{n}$ for all $\eta'$ extending $\eta$.  Consider the tree of cycles  $(Y'_{\tau})_{\tau}$ where
$Y'_{\tau} = \sum_{i=1,\ldots,n-1} Y_{i}^{\eta\tau}$.
 These are $(n-1)$-cycles and so by the  inductive hypothesis, we obtain continuum many ultrafilters on ${\mathcal B}$.
\end{proof}

\begin{Proposition}\label{prop small boolean algebra} Let  $S$ be a $G$-set with a $G$-invariant Boolean algebra $\mathcal B$ of subsets.  Suppose that for every finitely generated subgroup $G_{0}$ of $G$ and finite subset ${\mathcal  B}_{0}$ of $\mathcal B$ the Boolean algebra $\langle G_{0}{\mathcal B}_{0} \rangle$ generated by all translates of elements of $B_{0}$ by elements of $G_{0}$ is small. Then $S$ is ${\mathcal B}$-amenable. 
In particular if $\mathcal B$ is small, $S$ is ${\mathcal B}$-amenable. 
\end{Proposition}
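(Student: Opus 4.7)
The plan is to argue by contradiction, reducing to a situation where Lemma 4.12 applies inside a finitely generated sub-Boolean algebra. Suppose toward contradiction that $S$ is not $\mathcal{B}$-amenable. By Proposition 4.11, $S$ admits a $\mathcal{B}$-paradoxical decomposition, i.e., there is a nonempty cycle $Y = \sum_{i=1}^{n} Y_{i}$ with each $Y_{i} \in \mathcal{B}$ together with an injective $\mathcal{B}$-piecewise translation from $S + Y$ to $Y$. Only finitely many sets of $\mathcal{B}$ and finitely many elements of $G$ are named in this witness.

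Collecting these: let $\mathcal{B}_{0}$ be the finite collection of $\mathcal{B}$-sets that appear as summands of $Y$ or as pieces of the partitions of $S$ and of the $Y_{i}$ used in the piecewise translation, and let $G_{0}$ be the subgroup of $G$ generated by the finitely many translation elements that appear. Set $\mathcal{B}_{0}' := \langle G_{0} \mathcal{B}_{0} \rangle$; by hypothesis $\mathcal{B}_{0}'$ is small. The generalization of Lemma 4.5(2) to the present setting (noted just before the statement of the proposition) takes $S + Y \leq Y$ and produces $2Y \leq Y$. Inspecting the proof of 4.5(2), this reduction only involves finite sums and compositions of the original witness, so the resulting injective piecewise translation $f : 2Y \to Y$ is in fact a $\mathcal{B}_{0}'$-piecewise translation.

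Restricting $f$ to each of the two copies of $Y$ inside $2Y$ yields injective $\mathcal{B}_{0}'$-maps $f_{0}, f_{1} : Y \to Y$, and since $f$ is injective on the formal disjoint union $Y \sqcup Y$, the images satisfy $f_{0}(Y) \cap f_{1}(Y) = \emptyset$. As $Y \neq \emptyset$, Lemma 4.12 applied to the Boolean algebra $\mathcal{B}_{0}'$ yields that $\mathcal{B}_{0}'$ is not small, contradicting our choice of $G_{0}$ and $\mathcal{B}_{0}$. The ``in particular'' statement is immediate: if $\mathcal{B}$ itself is small, then every sub-algebra has a Stone space that is a continuous quotient of the countable Stone space of $\mathcal{B}$, hence is also countable. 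The main obstacle is the bookkeeping in the middle paragraph --- verifying that the witness to $2Y \leq Y$ produced by the generalized Lemma 4.5(2) really lives in $\mathcal{B}_{0}'$ --- but this is routine, because the chain of inequalities used there composes and sums the original witness finitely many times without introducing new sets or group elements.
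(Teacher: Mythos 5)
Your proof is correct and takes essentially the same approach as the paper's, reducing to Lemma 4.12 via the generalized Lemma 4.5(2). The only difference is a matter of ordering: the paper first obtains the injective maps $f_0, f_1 : Y \to Y$ with disjoint images and then collects the finitely many group elements and $\mathcal{B}$-sets appearing in $f_0, f_1$ to form $G_0$ and $\mathcal{B}_0$, whereas you collect this data from the original witness to $S + Y \leq Y$ and then verify that the passage through Lemma 4.5(2) stays inside $\langle G_0\mathcal{B}_0\rangle$ --- both bookkeeping routes lead to the same application of Lemma 4.12.
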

\begin{proof} If $S$ is not ${\mathcal B}$-amenable then it is witnessed by  $S+Y \leq Y$ for some nonempty cycle $Y$. By the generalization of Lemma \ref{lem par dec basics}(ii) mentioned after the statement of Proposition \ref{prop Bamen iff no Bpar dec}, we get $2Y \leq Y$, so we have injective ${\mathcal B}$-piecewise translations 
$f_{0}:Y\to Y$ and $f_{1}:Y\to Y$ such that $f_{0}(Y) \cap f_{1}(Y) = \emptyset$.  Let $G_{0}$ be the subgroup of $G$ generated by the finitely many elements of $G$ appearing in the translations in $f_{0}, f_{1}$, and let ${\mathcal B}_{0}$ be the finite collection of elements of ${\mathcal B}$ which appear as the subsets of the elements of the cycle $Y$ which are translated in the maps $f_{0}$, $f_{1}$.  Then $f_{0}$ and $f_{1}$ are $ \langle G_{0}{\mathcal B}_{0} \rangle$-maps, so $\langle G_{0}{\mathcal B}_{0} \rangle$ is not small  by Lemma \ref{lem tree of sets}. Hence, ${\mathcal B}$ is not small. 
\end{proof}

Here are some applications:

\begin{Corollary}\label{cor small implies def amen}  Suppose that $G$ is a definable group in a model $M$ of a small theory $T$. Then $G$ is definably amenable.
\end{Corollary}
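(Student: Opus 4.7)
The plan is to apply Proposition~4.13 to the action of $G$ on itself by left translation, with $\mathcal{B}$ the Boolean algebra of subsets of $G$ that are definable (with parameters) in $M$. This $\mathcal{B}$ is $G$-invariant, and $G$ is definably amenable precisely when this $G$-set is $\mathcal{B}$-amenable in the sense introduced before Proposition~4.13. So the whole task reduces to verifying the smallness hypothesis of that proposition.

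Fix a finitely generated subgroup $G_0 \leq G$ and a finite $\mathcal{B}_0 \subseteq \mathcal{B}$. Choose a finite generating set $g_1,\ldots,g_m$ for $G_0$, and a finite tuple $\bar c$ from $M$ containing all parameters used to define the finitely many elements of $\mathcal{B}_0$. The key observation is that every element of $G_0$ is a word in the generators via the definable group operation; hence for $B = \phi(x,\bar c') \in \mathcal{B}_0$ and $g = w(g_1,\ldots,g_m) \in G_0$, the translate $g \cdot B$ is defined by the formula $\phi\bigl(w(g_1,\ldots,g_m)^{-1}\cdot x, \bar c'\bigr)$, whose parameters all lie in the \emph{finite} set $A := \{g_1,\ldots,g_m\} \cup \bar c$. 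Consequently every member of $\langle G_0 \mathcal{B}_0 \rangle$ is $A$-definable, so the Stone space of $\langle G_0 \mathcal{B}_0 \rangle$ is a continuous image of the space of complete types over $A$ concentrating on $G$. Since $T$ is small and $A$ is finite, the ambient type space is countable (adding finitely many constants preserves smallness), and hence $\langle G_0 \mathcal{B}_0 \rangle$ is small. Invoking Proposition~4.13 now delivers $\mathcal{B}$-amenability of $G$, i.e.\ definable amenability.

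The only point worth checking carefully — and what would otherwise be a real obstacle — is the reduction from the potentially countably infinite collection $\{g \cdot B : g \in G_0,\ B \in \mathcal{B}_0\}$ to a Boolean algebra of sets definable over a \emph{finite} parameter set. This rests crucially on the finite generation of $G_0$ together with definability of the group operation, so that elements of $G_0$ contribute terms rather than independent new parameters. If one instead had to work over a countable parameter set, one would need the countability of $S_1(A)$ for countable $A$, which (as in $\mathrm{DLO}$) can fail even for small theories. Once the reduction to a finite parameter set is in place, the argument is essentially a one-line invocation of Proposition~4.13 combined with the defining property of smallness.
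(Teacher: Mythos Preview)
Your proof is correct and follows essentially the same approach as the paper: apply Proposition~4.13 with $S=G$ and $\mathcal{B}$ the algebra of definable subsets, and observe that for finitely generated $G_0$ and finite $\mathcal{B}_0$, every element of $\langle G_0\mathcal{B}_0\rangle$ is definable over a single finite parameter set (the generators of $G_0$ together with the parameters for $\mathcal{B}_0$ and for $G$), so smallness of $T$ gives smallness of that Boolean algebra. Your write-up is in fact more explicit than the paper's about why the translates are uniformly definable over finitely many parameters, and your closing remark correctly identifies this reduction as the only nontrivial point.
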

\begin{proof}  First as $T$ remains small after naming finitely many parameters we may assume $G$ is $\emptyset$-definable. Remember that definable amenability of $G$ refers to there being a translation invariant Keisler measure on the family of all definable, with parameters in $M$, subsets of $G$.  We apply Proposition \ref{prop small boolean algebra} to the case $S = G$ and 
${\mathcal B}$ the collection of definable subsets of $G$.  If ${\mathcal B}_{0}$ is a finite subset of ${\mathcal B}$ and $G_{0}$ is a finitely generated subgroup of $G$ then the elements of the Boolean algebra  $\langle G_{0}{\mathcal B}_{0} \rangle$ are all definable over a fixed finite set $A$ of parameters. So by smallness of $T$ this Boolean algebra is small, and we can apply Proposition \ref{prop small boolean algebra}. 
\end{proof} 

Proposition \ref{prop small boolean algebra} also gives another proof of  Fact \ref{fact stable gps are dfn amen}  above:
\begin{Corollary}  Any group $G$ definable in a model $M$ of a stable theory is definably amenable.
\end{Corollary}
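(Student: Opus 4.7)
The plan is to apply Proposition 4.13 directly, taking $S = G$ with the left translation action of $G$ on itself, and $\mathcal{B}$ the Boolean algebra of all parameter-definable subsets of $G$. The hypothesis of Proposition 4.13 then demands that for every finitely generated subgroup $G_0$ of $G$ and every finite $\mathcal{B}_0 \subseteq \mathcal{B}$, the Boolean algebra $\langle G_0 \mathcal{B}_0 \rangle$ is small, and this is what I plan to extract from stability.

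Given such $G_0$ and $\mathcal{B}_0 = \{X_1, \ldots, X_k\}$, with $X_i$ cut out by $\phi_i(x, a_i)$, I first observe that $g \cdot X_i$ is defined by $\psi_i(x, g^{-1}, a_i)$ for the formula $\psi_i(x, y_1, y_2) := \phi_i(y_1 x, y_2)$. So the generators of $\langle G_0 \mathcal{B}_0 \rangle$ are instances of the finitely many formulas $\psi_1, \ldots, \psi_k$ with parameters from the set $B := \{(g^{-1}, a_i) : g \in G_0, i \in [k]\}$, and since $G_0$ is finitely generated (hence countable), so is $B$.

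The key input from stability is the classical local bound: for any stable formula $\psi(x, \bar{y})$ and any set $C$, the space $S_\psi(C)$ of complete $\psi$-types over $C$ satisfies $|S_\psi(C)| \leq |C| + \aleph_0$. Each $\psi_i$ is stable since $T$ is, so each $S_{\psi_i}(B)$ is countable. Because every ultrafilter on $\langle G_0 \mathcal{B}_0 \rangle$ is determined by the complete $\psi_i$-type over $B$ that it induces for each $i \in [k]$, the Stone space of $\langle G_0 \mathcal{B}_0 \rangle$ injects into the finite product $\prod_{i=1}^{k} S_{\psi_i}(B)$ and is thus countable. So $\langle G_0 \mathcal{B}_0 \rangle$ is small, and Proposition 4.13 yields definable amenability of $G$.

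I do not anticipate any genuine obstacle: the argument is a direct translation of the small-theory proof of Corollary 4.14, with the global smallness of type spaces replaced by the local smallness that stability of each individual formula provides over a countable parameter set.
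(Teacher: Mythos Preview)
Your proof is correct and follows essentially the same route as the paper: apply Proposition~4.13 with $S = G$ and $\mathcal{B}$ the definable subsets, and verify the smallness hypothesis via the local stability bound $|S_{\psi}(B)| \leq |B| + \aleph_0$ over a countable parameter set. The paper first reduces (via Fact~4.4) to countable $T$ and $M$ and then shows smallness of the Boolean algebra generated by \emph{all} $G$-translates rather than just $G_0$-translates, but this is a cosmetic variation of the same argument.
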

\begin{proof}  By Fact \ref{fact def amen iff no dpd} we may assume that $T$ is countable and $M$ is countable.  Then for any finite collection $X_{1},\ldots,X_{n}$ of definable subsets of $G$, the Boolean algebra generated by the set of  all left $G$-translates of the $X_{i}$ is small. (For any finite collection of $L$-formulas $\phi_{1}(x,y_{1}), \ldots, \phi_{n}(x,y_{n})$ where $x$ ranges over $G$ and the $y_{i}$ are arbitrary tuples, the Boolean algebra generated by instances $\phi_{i}(x,a_{i})$ of the $\phi_{i}$, with $a_{i}$ in a given countable model  is small.)  So we can apply Proposition \ref{prop small boolean algebra} again. 
\end{proof}

One could unify the two previous corollaries as follows. Let $G$ be a definable group. Suppose that for every finite set $\Delta = \{\phi_{1}(x,y_{1}), \ldots, \phi_{n}(x,y_{n})\}$ of $L$-formulas, and finite set $A$ of parameters, the Boolean algebra of subsets of $G$ which are both $\Delta$-definable and $A$-definable, is small. Then $G$ is definably amenable. 

It is natural to ask whether every complete countable small theory $T$ is {\em amenable}, as defined in the introduction.  However the theory of the dense circular ordering is $\omega$-categorical, with a unique $1$-type over $\emptyset$, but there is no automorphism invariant Keisler measure on the universe $x=x$, as explained in Remark 2.2 of \cite{HKP}, as $\emptyset$ is not an extension base. 

But we point out that a rather weaker property follows from Proposition \ref{prop small boolean algebra}:

$(*)$   For every $\emptyset$-definable set $D$ there is a global Keisler measure concentrated on $D$ which is invariant under {\em definable} automorphisms.
 
\smallskip
We may want to call a complete theory $T$ {\em weakly amenable} if it satisfies $(*)$, but this would be an unnecessary introduction of new terminology. In any case:

\begin{Corollary}\label{cor small theory satisfies (*)}
Suppose that the countable complete theory $T$ is small. Then $T$ satisfies $(*)$.
\end{Corollary}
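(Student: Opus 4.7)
The plan is to apply Proposition 4.13 in essentially the same spirit as the proof of Corollary 4.14, but with the group of (parameter-)definable automorphisms in place of a definable group acting on itself by translation. Fix a saturated model $\bar M$ of $T$ and a $\emptyset$-definable set $D$. Take $S := D(\bar M)$; let $G$ be the group of definable automorphisms of $\bar M$ (each of which restricts to a bijection of $D$, since $D$ is $\emptyset$-definable); and let $\mathcal B$ be the Boolean algebra of all (parameter-)definable subsets of $D$. Then $\mathcal B$ is $G$-invariant. Any $G$-invariant finitely additive probability measure $\mu_0$ on $\mathcal B$ extends to a global Keisler measure $\mu$ on the sort of $D$ concentrated on $D$ via $\mu(X) := \mu_0(X \cap D)$ for $X$ a definable subset of the sort of $D$; since $\sigma(X) \cap D = \sigma(X \cap D)$ for every $\sigma \in G$, this $\mu$ is invariant under all definable automorphisms, as required.

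To verify the hypothesis of Proposition 4.13, let $G_0 \leq G$ be finitely generated and $\mathcal B_0 \subseteq \mathcal B$ finite. Each generator of $G_0$ is given by a formula with finitely many parameters, and each element of $\mathcal B_0$ is definable over a finite parameter set; gathering these together we obtain a finite set $A \subseteq \bar M$ over which every generator of $G_0$ and every element of $\mathcal B_0$ is definable. Since composition, inverse, and images of $A$-definable sets under $A$-definable maps remain $A$-definable, $\langle G_0 \mathcal B_0 \rangle$ is a sub-Boolean-algebra of the Boolean algebra $\mathcal B_A$ of $A$-definable subsets of $D$. By smallness of $T$, the space $S_n(A)$ is countable (for any $n$, and in particular for the arity matching the sort of $D$); hence the Stone space of $\mathcal B_A$, which identifies with the closed subspace of types over $A$ containing the formula ``$x \in D$'', is countable as well. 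Sub-Boolean-algebras of Boolean algebras with countable Stone space again have countable Stone space (restriction of ultrafilters being surjective), so $\langle G_0 \mathcal B_0 \rangle$ is small, and Proposition 4.13 yields the desired measure.

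The proof is a mild variation on Corollary 4.14. The one nuance worth highlighting is that the acting group $G$ of all definable automorphisms is not itself $\emptyset$-definable, and different generators of a finitely generated subgroup may require different parameter sets of definition; but this causes no real difficulty, as only the finite collection of parameters used by the chosen generators of $G_0$ (together with those defining the sets in $\mathcal B_0$) enters the argument, and smallness is precisely the tool needed to handle it. I do not expect any substantive obstacle.
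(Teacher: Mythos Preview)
Your proof is correct and follows essentially the same approach as the paper: apply Proposition~4.13 with $S = D$, $G = \mathrm{Aut}_{\mathrm{def}}(\bar M)$, and $\mathcal B$ the Boolean algebra of definable subsets of $D$, using smallness to verify that each $\langle G_0 \mathcal B_0\rangle$ is small. You supply more detail than the paper does (in particular the observation that all elements of $\langle G_0 \mathcal B_0\rangle$ are $A$-definable for a single finite $A$, and the passage from a measure on $\mathcal B$ to a global Keisler measure concentrated on $D$), but the argument is the same.
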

\begin{proof} Let $D$ be a $\emptyset$-definable set in a saturated model $\bar M$ of $T$. Let $Aut_{def}({\bar M})$ be the group of automorphisms of ${\bar M}$ which are
definable (with parameters) in ${\bar M}$.  Apply Proposition \ref{prop small boolean algebra} to the situation where $G = Aut_{def}({\bar M})$,  $S = D$, and ${\mathcal B}$ is the Boolean algebra of all definable (with parameters) subsets of $D$. Then by smallness the assumption of Proposition \ref{prop small boolean algebra} is satisfied, so we get $(*)$.
\end{proof}

\begin{Remark} 
\begin{enumerate}[(i)]
\item Corollary \ref{cor small theory satisfies (*)} implies Corollary \ref{cor small implies def amen}  via the usual trick of adding a new affine sort.
\item  We obtain a characterization of when an $\emptyset$-definable set $D$ satisfies $(*)$, by the nonexistence of an appropriate  paradoxical decomposition. This is by Proposition \ref{prop Bamen iff no Bpar dec} applied to $G = Aut_{def}({\bar M})$, $S = D$ and ${\mathcal B}$ the Boolean algebra of definable subsets of $D$.
\item Similarly taking $G = Aut({\bar M})$, $S = D$ and ${\mathcal B}$ as in (ii) we obtain a characterization of when there is an automorphism invariant global  Keisler measure on $D$. 
\end{enumerate}
\end{Remark}

Finally we give an application of the discussion above (Lemma \ref{lem tree of sets}) to prove the nontriviality of {\em graded} Grothendieck rings of small theories.  We first recall the usual Grothendieck rings (\cite{K-S}) attached to a structure $M$ which may be many sorted, although we give a slightly different presentation.  We will assume that some sort has at least $2$ elements.
Let $Def(M)$ be the collection of all definable (with parameters from $M$) subsets of products of the basic sorts of $M$.   Let 
$K(M)$ be the free abelian  monoid generated by $Def(M)$. We  can view the elements of $K(M)$ as cycles  $
\sum_{i=1,\ldots,n} X_{i}$ where the $X_{i}$ are definable sets.  As earlier we have the notion of a definable map between 
cycles and in particular  a definable bijection between cycles. Let $\sim$ be  the equivalence relation on $K(M)$ of being in 
definable bijection, for a cycle $D$ let $[D]$ be its $\sim$-equivalence class, and let $K_{semi}(M)$ be the quotient 
$K(M)/\sim$.  In this context one sees that every cycle is $\sim$-equivalent to a definable set (in an appropriate product of sorts), whereby $K_{semi}(M) = \{[D]: D
\in Def(M)\}$, and is moreover an abelian monoid with $0 = [\emptyset]$.  It also has a unital commutative semiring 
 structure by defining $[D_{1}]\cdot [D_{2}] = [D_{1}\times D_{2}]$ and taking the multiplicative identity to be 
$[\{a\}]$ for any singleton in any sort.  Finally we put an  equivalence relation $\sim_{0}$ on $K_{semi}(M)$:  
$[D_{1}] \sim_{0} [D_{2}]$ if there is $[D]$ such that $[D_{1}] + [D] = [D_{2}] + [D]$.   We denote the quotient by 
$K'_{semi}(M)$, a cancellative, unital, commutative semiring. We let $[D]_{0}$ denote the $\sim_{0}$-class of $[D]$.  
Then adding formal additive inverses yields a canonical unital commutative ring $K_{0}(M)$ extending
$K'_{semi}(M)$, called the \emph{Grothendieck ring} of the structure $M$. The elements of $K_{0}(M)$ can be written in the form $[D_{1}]_{0} - [D_{2}]_{0}$, for $D_{1}, D_{2}\in Def(M)$. 
\begin{Example}  Let  $M = (\N, s)$, where $s$ is the successor function. Then $Th(M)$ is small, but $K_{0}(M)$ is trivial.
\end{Example} 
\begin{proof}  The function $s$ gives a definable bijection from $\N$ to $\N\setminus \{0\}$ whereby $[\{0\}]_{0} + [\N]_{0} = [\N]_{0}$ in $K_{0}(M)$, hence $[\{0\}]_{0}$ is the zero element of $K_{0}(M)$. As it 
is also the $1$ of $K_{0}(M)$, $K_{0}(M)$ is trivial. 
\end{proof} 

 However by working with \emph{graded} Grothendieck rings we get a rather different situation.  Again we fix a structure $M$, maybe many-sorted, but we define $K_{0}(S)$ for  $S$ a sort, or product of sorts, and define $K_{0}^{grad}(M)$ to be $\bigoplus_{S}K_{0}(S)$.   Here are the details.   First fix a sort $S$ (or product of sorts).   Start with $Def(S)$ the Boolean algebra of definable (with parameters) subsets of $S$.  Again define $K(S)$ to be collection of cycles of formal sums of elements of $Def(S)$, and $\sim$ the equivalence relation on $K(S)$ of being in definable bijection. Let $K_{semi}(S)$ be the quotient $K(S)/\sim$. It is no longer true that every element of $K_{semi}(S)$ is of the form $[D]$ for $D\in Def(S)$.   Again form $K'_{semi}(S)$, and $K_{0}(S)$ whose elements are of the form $[D_{1}]_{0} -  [D_{2}]_{0}$ for $D_{1}, D_{2}\in K_{semi}(S)$.  Now  $K_{0}(S)$ is just a commutative group with no ring structure.
We define  $K_{0}^{grad}(M)$ to be the direct sum  $\bigoplus_{S}K_{0}(S)$ with its abelian group structure, but also with a commutative ring structure obtained as follows:  for $D_{1}\in Def(S_{1})$ and $D_{2}\in Def(S_{2})$, let $[D_{1}]_{0}\cdot [D_{2}]_{0} =  [D_{1}\times D_{2}]_{0}$ in $K_{0}(S_{1}\times S_{2})$.  And extend it bilinearly to $\cdot$ from $K_{0}(S_{1})\times K_{0}(S_{2})$  to $ K_{0}(S_{1}\times S_{2})$. 

\begin{Proposition} Let $M$ be any structure in a countable language such that $Th(M)$ is small. Then
\begin{enumerate}[(i)]
\item  For every sort $S$, the group $(\Z,+)$ embeds into the group $K_{0}(S)$, in particular $K_{0}(S)$ is nontrivial,
\item The ideal $t\Z[t]$ in the polynomial ring $\Z[t]$ embeds in $K_{0}^{grad}(M)$.
\end{enumerate}
\end{Proposition}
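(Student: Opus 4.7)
The plan is to prove (1) first and deduce (2) from it via the graded decomposition. For (1), fix a nonempty sort (or product of sorts) $S$ and consider the group homomorphism $\psi: \Z \to K_0(S)$ given by $\psi(1) = [S]_0$. Since $K'_{semi}(S)$ is cancellative, the natural map $K'_{semi}(S) \to K_0(S)$ is injective, so it suffices to rule out, for every $n \geq 1$, the existence of a cycle $E = \sum_{i=1}^{\ell} E_i$ of definable subsets of $S$ with $nS + E \sim E$.

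Assume such an $E$ exists. The definable bijection gives an injective definable map $nS + E \to E$. Since addition of a common cycle preserves the existence of injective definable maps (take the identity on the common summand), iteration yields injective definable maps $knS + E \to E$ for every $k \geq 1$. The inclusions $E_i \hookrightarrow S$ assemble to an injective definable map $E \to \ell S$. Picking $k$ with $kn \geq \ell$ and composing, we obtain
\[
2E = E + E \;\longrightarrow\; E + \ell S \;\longrightarrow\; E + knS \;\longrightarrow\; E
\]
all injectively; this is the analog of Lemma 4.5(2) for general definable maps. Splitting $2E$ into its two copies of $E$ gives injective definable maps $f_0, f_1: E \to E$ with disjoint images. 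Choose a finite parameter set $A$ over which $E_1, \ldots, E_\ell, f_0, f_1$ are defined, and let $\mathcal{B}$ be the Boolean algebra of all $A$-definable subsets of $S$. Smallness of $Th(M)$ makes $S_1(A)$ countable, so $\mathcal{B}$ is small. On the other hand $f_0, f_1$ are $\mathcal{B}$-maps (the partitions and functions in their definable presentations are $A$-definable), so Lemma 4.12 applied with $Y = E$ forces $\mathcal{B}$ to be non-small, a contradiction. This proves (1).

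For (2), fix a sort $S$ with $|S| \geq 2$, which exists by the standing assumption; then every $S^n$ is a nonempty product of sorts, and (1) applies to each to give $\Z \hookrightarrow K_0(S^n)$ via $1 \mapsto [S^n]_0$. Define $\phi: t\Z[t] \to K_0^{grad}(M)$ on monomials by $\phi(t^n) = [S^n]_0 \in K_0(S^n)$ and extend $\Z$-linearly. Additivity is built in; multiplicativity follows from $\phi(t^i t^j) = [S^{i+j}]_0 = [S^i \times S^j]_0 = [S^i]_0 \cdot [S^j]_0 = \phi(t^i)\phi(t^j)$, extended bilinearly. For injectivity, if $\phi\bigl(\sum_{i=1}^{n} a_i t^i\bigr) = 0$, the direct sum decomposition of $K_0^{grad}(M)$ forces $a_i[S^i]_0 = 0$ in $K_0(S^i)$ for every $i$, whence $a_i = 0$ by (1) applied to $S^i$.

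The main obstacle is the cycle-theoretic step inside (1): passing from the assumption $nS + E \sim E$ to an injection $2E \to E$. The key point that makes this work in the Grothendieck setting, where $\leq$ is by definable maps rather than by piecewise translations, is that every summand $E_i$ of $E$ sits inside $S$ and hence admits the trivial inclusion as an injective definable map to $S$, which is exactly what replaces the use of $G$-translations in the proof of Lemma 4.5. Once $2E \to E$ is established, Lemma 4.12 closes (1) immediately, and (2) becomes a formal manipulation with the graded structure.
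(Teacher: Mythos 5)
Your proof is correct and takes essentially the same route as the paper: for (1) you derive $2E \leq E$ from $nS+E \sim E$ by the same inclusion-plus-iteration argument that underlies the paper's appeal to ``the obvious variant of Lemma 4.5(2),'' and you then invoke Lemma 4.12 exactly as the paper does; for (2) you merely spell out what the paper leaves as a one-line remark (that $[S]_0$ generates a copy of $t\Z[t]$), verifying injectivity via the direct-sum grading.
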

\begin{proof} (i) Consider the homomorphism from $(\Z,+)$ to $K_{0}(S)$ which takes $1$ to $[S]_{0}$. To show it is an embedding we have only to show that for each 
$n\geq 1$, $n[S]_{0} \neq 0$.  Otherwise there is a cycle $Y \in K(S)$ such that  $n[S] + [Y] = [Y]$, yielding a definable injection from $S+Y$ to $Y$, and thus from $Y+Y$ to $Y$ by the obvious variant of Lemma \ref{lem par dec basics}(ii). So we have definable injections $f_{0}:Y\to Y$ and $f_{1}:Y\to Y$ with $f_{0}(Y) \cap f_{1}(Y) = \emptyset$. Let $\mathcal B$ be the Boolean algebra of subsets of $S$ which are definable over the fixed finite set of parameters over which $f_{0}$, $f_{1}$ and the summands of $Y$ are defined. Then $f_{0}$, $f_{1}$ are ${\mathcal B}$-maps. By Lemma \ref{lem tree of sets}, ${\mathcal B}$ is not small,  hence $Th(M)$ is not small, a contradiction.

(ii) Given a sort $S$, we see from the proof of (i) that $[S]_{0}$ generates a subring isomorphic to $t\Z[t]$.  
\end{proof}

Note that we obtain a characterization of when $K_{0}^{grad}(M)$ is trivial. It is precisely that for every sort $S$ and definable subset $D$ of $S$ there is a cycle $Y\in K(S)$ such that 
$D+Y \sim Y$.

On the other hand, triviality of the Grothendieck ring $K_{0}(M)$ is equivalent to there being a definable set $D$ and $d\in D$ such that $D$ and $D\setminus\{d\}$ are in definable bijection. 

A final remark is that the definition of $K_{0}^{grad}(M)$ depends on the choice of sorts $S$.  We could rechoose all definable sets to be sorts, in which case the new graded Grothendieck ring will be bigger and nontrivial, because for a singleton sort $S$, all cycles on $S$ are finite sums of singletons and two such cycles are $\sim$-equivalent iff they have the same cardinality.

\end{document}